\documentclass[11pt]{amsart}
\usepackage{amsfonts,url,color}

\newtheorem{theorem}{Theorem}[section]
\newtheorem{prop}[theorem]{Proposition}
\newtheorem{lemma}[theorem]{Lemma}
\newtheorem{cor}[theorem]{Corollary}

\newenvironment{prob}{\preprob\rm}{\endpreprob}

\newenvironment{conj}{\preconj \rm}{\endpreconj}

\newenvironment{qn}{\preqn\rm}{\endpreqn}

\newenvironment{defn}{\predefn\rm}{\endpredefn}

\newenvironment{ex}{\preex\rm}{\endpreex}

\newcommand{\Aut}{\mathop{\mathrm{Aut}}\nolimits}
\newcommand{\out}{\mathop{\mathrm{Out}}\nolimits}
\newcommand{\Soc}{\mathop{\mathrm{Soc}}\nolimits}
\newcommand{\sym}{\mathrm{S}}
\newcommand{\alt}{\mathrm{A}}
\newcommand{\psl}{\mathop{\mathrm{PSL}}\nolimits}
\newcommand{\psu}{\mathop{\mathrm{PSU}}\nolimits}
\newcommand{\pgl}{\mathop{\mathrm{PGL}}\nolimits}
\newcommand{\agl}{\mathop{\mathrm{AGL}}\nolimits}
\newcommand{\gl}{\mathop{\mathrm{GL}}\nolimits}
\newcommand{\slin}{\mathop{\mathrm{SL}}\nolimits}

\newcommand{\gaml}{\mathop{\Gamma\mathrm{L}}\nolimits}
\newcommand{\rem}{\mathrm{m}}
\newcommand{\rg}{\Gamma}
\newcommand{\rc}{\mathrm{c}}
\newcommand{\rw}{\mathrm{w}}
\newcommand{\Core}{\mathop{\mathrm{Core}}\nolimits} 
\newcommand{\frat}{\mathop{\mathrm{Frat}}\nolimits}
\newcommand{\End}{\mathop{\mathrm{End}}\nolimits}
\newcommand{\fit}{\mathop{\mathrm{Fit}}\nolimits}
\newcommand{\perm}{\mathop{\mathrm{Sym}}}
\newcommand{\Fix}{\mathop{\mathrm{Fix}}\nolimits}
\newcommand{\Magma}{{\sc MAGMA}}

\begin{document}

\title{Generating sets of finite groups}
\author{Peter J. Cameron} \address{Peter J. Cameron, 
University of St Andrews, Mathematical Institute, St Andrews, Fife KY16 9SS, Scotland}\email{pjc20@st-andrews.ac.uk}
\author{Andrea Lucchini} \address{Andrea Lucchini, Dipartimento di Matematica,
Universit\`a degli studi di Padova, Via Trieste 63, 35121
Padova, Italy} \email{lucchini@math.unipd.it}
\author{Colva M. Roney-Dougal}\address{Colva M. Roney-Dougal, 
University of St Andrews,
  Mathematical Institute, St Andrews, Fife KY16 9SS, Scotland} 
\email{colva.roney-dougal@st-andrews.ac.uk}

\keywords{finite group, generation, generating graph}
\thanks{The second
 and third authors were supported by Universit\`a di Padova (Progetto
  di Ricerca di Ateneo: Invariable generation of groups).}
\date{\today}
\maketitle

\begin{abstract}
We investigate the extent to which the exchange relation holds in
finite groups $G$. We define a new equivalence relation
$\equiv_{\rem}$, 
where two elements
are equivalent if each can be substituted for the other in any
generating set for $G$. We then refine this to 
a new sequence $\equiv_\rem^{(r)}$ of equivalence relations by saying that 
$x \equiv_\rem^{(r)}y$  if each can be substituted for the other in any
$r$-element generating set.
The relations $\equiv_\rem^{(r)}$ become finer as $r$ increases, and
we define a new group invariant
$\psi(G)$ to be the value of $r$ at which they stabilise to $\equiv_{\rem}$. 

Remarkably, we are able to prove that if
$G$ is soluble then $\psi(G) \in \{d(G), d(G) +1\}$, where $d(G)$ is
the minimum number of generators of $G$, and  to classify the finite soluble
groups $G$ for which $\psi(G) = d(G)$. For insoluble $G$, we show that
$d(G) \leq \psi(G) \leq d(G) + 5$. However, we know of no examples of
groups $G$ for which $\psi(G) > d(G) + 1$. 

As an application, we look at the \emph{generating graph} of $G$,
whose vertices are the elements of $G$, the edges being the $2$-element
generating sets. 
Our relation $\equiv_{\rem}^{(2)}$ enables 
us to calculate $\Aut(\Gamma(G))$ for all soluble groups $G$ of
nonzero spread, and give detailed structural information about
$\Aut(\Gamma(G))$ in the insoluble case.  
\end{abstract}

\section{Introduction}

It is well known that generating sets for groups 
are far more complicated
than generating sets for, say, vector spaces. The latter satisfy the
exchange axiom, and hence any two irredundant 
sets
have the same cardinality. According to
the Burnside Basis Theorem, a similar property holds for groups of
prime power order.

Our starting point is the observation that, in order to
understand better the generating sets for arbitrary finite groups, we should investigate
the extent to which the exchange property holds. We define an equivalence
relation $\equiv_{\rem}$ 
on a finite group $G$, in which two elements are equivalent if each can be
substituted for the other in any generating set for $G$. Then two elements
are equivalent if and only if they lie in the same maximal subgroups
of $G$.

We refine this relation to a sequence of relations
$\equiv_{\rem}^{(r)}$ whose terms depend on a positive integer
$r$, where two elements are equivalent if each can be substituted
for the other in any $r$-element generating set. 
The relations $\equiv_\rem^{(r)}$ become finer as $r$ increases; we
observe in Lemma~\ref{lem:m-basics}  that the smallest
value of $r$ for which $\equiv_\rem^{(r)}$ is not the universal relation is
the minimum number $d(G)$ of generators of $G$. 

We define
a new group invariant
$\psi(G)$ to be the value of $r$ at which the relations $\equiv_{\rem}^{(r)}$
stabilise to $\equiv_{\rem}$. Remarkably, it turns out 
(see Corollary~\ref{cor:sol}) that if
$G$ is soluble then $\psi(G) \in \{d(G), d(G) +1\}$. In 
Theorem~\ref{thm:small_d} we even succeed in giving a precise structural description of
the finite soluble groups $G$ for which $\psi(G) = d(G)$.

In the general case, we show in 
Corollary~\ref{cor:insol} and Proposition~\ref{prop:mu}
that $\psi(G) \leq d(G) + 5$, with tighter
bounds when $G$ is (almost) simple. However, we know of no examples of
groups $G$ for which $\psi(G) >  d(G)+1$. 

The relation $\equiv_\rem$ can be a little
tricky to work with, so in Section~\ref{sec:c} we introduce a far
simpler relation, by defining
$x \equiv_{\rc} y$ if $\langle x \rangle = \langle y \rangle$. 
This is clearly a refinement of $\equiv_{\rem}$, and
 provides an easy-to-calculate upper bound on the number of
$\equiv_{\rem}$-classes, and lower bound on their sizes. In
  Theorem~\ref{thm:sol_c} we characterise the soluble groups $G$ on
  which these two relations coincide; it would be very interesting to
  determine for which insoluble groups they are equal.

As an  application, we notice that 
the relation $\equiv_\rem^{(2)}$ is particularly interesting for two-generator
groups. Such groups $G$ 
have long been studied by means of the \emph{generating graph},
whose vertices are the elements of $G$, the edges being the $2$-element
generating sets. 
 The generating graph was defined by Liebeck and
Shalev in \cite{LiebeckShalev96}, and has been further investigated by
many authors: see for example 
\cite{beghm, bglmn, CL3, gk, lmclique, lm, LMRD, atsim} for some
of the range of questions that have been considered. Many deep
structural results about finite groups can be expressed in terms of
the generating graph. 

We notice that two group elements are $\equiv_\rem^{(2)}$-equivalent if
and only if they have the same neighbours in the generating graph. By
identifying the vertices in each equivalence class, we obtain a reduced
graph $\overline{\Gamma}(G)$, which has many fewer vertices, but the same
spread, clique number and chromatic number, amongst other properties. 
We conjecture 
 that in a group $G$ of nonzero spread,
 the equivalence relations
$\equiv_\rem$ and $\equiv_\rem^{(2)}$ coincide.

The automorphism groups of generating graphs are extremely large, and
their study has up to now seemed intractable. However, 
we show in Theorem~\ref{thm:aut_g} that the automorphism group of
${\Gamma}(G)$ has a very compact description in terms of the sizes of
the $\equiv_{\rem}^{(2)}$-classes of $G$, and the group
$\Aut(\overline{\Gamma}(G))$. Using this, we are able to give a precise
description of 
the automorphism groups of the generating graphs of all
soluble groups of nonzero spread, and a detailed description in
the insoluble case.

We have carried out many computational experiments on small 
insoluble groups $G$ of nonzero spread. In each case we found that
$\psi(G) = 2$, and that $\Aut(\Gamma(G))$ is completely and
straightforwardly determined by
the sizes of the $\equiv_{\rem}^{(2)}$-classes and
$\Aut(G)$. 

The paper is structured as follows. In Section~\ref{sec:equivs} we
study the relations $\equiv_\rem$ and $\equiv_{\rem}^{(r)}$, and the related invariant
$\psi(G)$. 
In Section~\ref{sec:c}
we look at the relation $\equiv_{\rc}$. In Section~\ref{sec:gen} we
introduce the generating graph $\Gamma(G)$ and the reduced generating
graph $\overline{\Gamma}(G)$, and then in Section~\ref{sec:aut} we
study the group $\Aut(\Gamma(G))$ for groups $G$ of nonzero spread. 

\section{A hierarchy of equivalences}\label{sec:equivs}

\subsection{Definitions and elementary results}

We shall now introduce our main families of relations, and establish a
few basic results concerning them.

\begin{defn}
Let $G$ be a finite group. 
We define an equivalence relation $\equiv_\rem$ ($\rem$ for ``maximal
subgroups'') on $G$ by letting  $x\equiv_\rem y$ if and only if
$x$ and $y$ lie in exactly the same maximal subgroups of $G$. 
\end{defn}

Note that the $\equiv_\rem$-class containing the identity is precisely the
Frattini subgroup  of $G$,  and any $\equiv_\rem$-class is a union of
cosets of the Frattini subgroup.

The equivalence relation $\equiv_\rem$ can also be characterised by a
substitution property:

\begin{prop}
Let $G$ be a finite group, and let $x$ and $y$ be elements of $G$.
Then $x\equiv_\rem y$ if and only if
\[(\forall r)(\forall z_1,\ldots,z_r\in G)((\langle x,z_1,\ldots,z_r\rangle=G)
\Leftrightarrow(\langle y,z_1,\ldots,z_r\rangle=G)).\]
\end{prop}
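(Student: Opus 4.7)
The plan is to prove both implications by straightforward contradiction, exploiting the basic fact that a subset $S \subseteq G$ generates $G$ if and only if $S$ is contained in no maximal subgroup.

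For the forward direction, suppose $x \equiv_\rem y$, and fix $z_1,\ldots,z_r \in G$ with $\langle x, z_1,\ldots,z_r\rangle = G$. By symmetry it is enough to show $\langle y, z_1,\ldots,z_r\rangle = G$. If this failed, the subgroup would sit inside some maximal subgroup $M$ of $G$, so in particular $y \in M$. By hypothesis $x$ lies in the same maximal subgroups as $y$, hence $x \in M$ as well, and then $\langle x, z_1,\ldots,z_r\rangle \leq M \ne G$, a contradiction.

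For the reverse direction, assume the substitution property and suppose for contradiction that $x$ and $y$ are not $\equiv_\rem$-equivalent; without loss of generality there is a maximal subgroup $M$ with $y \in M$ and $x \notin M$. Since $G$ is finite, so is $M$; choose any generating set $\{z_1,\ldots,z_r\}$ of $M$. Then $\langle y, z_1,\ldots,z_r\rangle = M \ne G$, whereas $\langle x, z_1,\ldots,z_r\rangle$ strictly contains $M$ (as it also contains $x \notin M$) and therefore equals $G$ by maximality of $M$. This directly contradicts the substitution property.

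There is no real obstacle here: the result is essentially a reformulation of the definition via the observation that maximal subgroups are exactly the obstructions to generation. The only care needed is to note in the reverse direction that finiteness of $G$ lets us choose a finite generating set for $M$, so the substitution property (which is quantified over finite tuples) applies.
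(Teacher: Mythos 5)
Your proof is correct and follows essentially the same route as the paper's: the forward direction uses that a failure to generate is witnessed by a maximal subgroup containing $y$ but not $x$, and the reverse direction takes a finite generating set of the separating maximal subgroup $M$ and adjoins $x$ or $y$. The only cosmetic difference is that the paper phrases the first half contrapositively, which is logically the same argument.
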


\begin{proof}
Suppose first that $\langle x,z_1,\ldots,z_r\rangle=G$ but
$\langle y,z_1,\ldots,z_r\rangle\ne G$. Then there is a maximal subgroup $M$
of $G$ containing $y,z_1,\ldots,z_r$. Clearly $x\notin M$; so
$x\not\equiv_\rem y$.

Conversely, suppose that $x\not\equiv_\rem y$, so that (without loss of
generality) there is a maximal subgroup $M$ containing $y$ but not $x$. Choose
generators $z_1,\ldots,z_r$ for $M$. Then $\langle y,z_1,\ldots,z_r\rangle=M$,
but $\langle x,z_1,\ldots,z_r\rangle$ properly contains $M$, and so is equal
to $G$.
\end{proof}

This means that, when considering generating sets (of any cardinality) for a
group $G$, we may restrict our attention to subsets of a set of 
$\equiv_\rem$-class representatives.

\begin{defn}
For any positive integer $r$, define equivalence relations $\equiv_\rem^{(r)}$
by the rule that $x\equiv_\rem^{(r)}y$ if and only if
\[(\forall z_1,\ldots,z_{r-1}\in G)((\langle x,z_1,\ldots,z_{r-1}\rangle=G)
\Leftrightarrow(\langle y,z_1,\ldots,z_{r-1}\rangle = G)).\]
\end{defn}


\begin{lemma}\label{lem:m-basics}
\begin{enumerate}
\item[(1)] The relations $\equiv_\rem^{(r)}$ get finer as $r$ increases.
\item[(2)] The smallest value of $r$ for which $\equiv_\rem^{(r)}$
	is not the universal relation is $d(G)$. 
        For $r = d(G)$, there are at least $r+1$
        equivalence classes. 
\item[(3)] The limit value of this sequence of relations is
$\equiv_\rem$.
\end{enumerate}
\end{lemma}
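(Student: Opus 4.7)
The three parts can be treated in order, each reducing to a short argument once the substitution characterisation of $\equiv_\rem$ just established is available. I expect parts (1) and (3) to be essentially formal; the only step needing any genuine combinatorics is the ``$r+1$ classes'' assertion in~(2).

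For (1), suppose $x\equiv_\rem^{(r+1)}y$ and fix $z_1,\dots,z_{r-1}\in G$. Applying the defining biconditional of $\equiv_\rem^{(r+1)}$ to the extended tuple $(z_1,\dots,z_{r-1},1)$, and using that adjoining the identity leaves a generated subgroup unchanged, yields the $r$-defining biconditional for $(z_1,\dots,z_{r-1})$, so $x\equiv_\rem^{(r)}y$. For (3), the sequence $\{\equiv_\rem^{(r)}\}_{r\ge 1}$ is monotone decreasing by~(1), and since $G$ is finite it must stabilise; that its stable value is $\equiv_\rem$ is then immediate from the substitution characterisation, because $x\equiv_\rem y$ is equivalent to the $\equiv_\rem^{(r)}$-biconditional holding for every $r$, i.e.\ to lying in the intersection.

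For (2), first observe that if $r<d(G)$ then no $r$-element subset of $G$ generates, so both sides of the defining biconditional of $\equiv_\rem^{(r)}$ are always false and the relation is universal. For $r=d(G)$, the plan is to fix a generating set $y_1,\dots,y_r$ of minimum size and argue that $1,y_1,\dots,y_r$ lie in $r+1$ distinct $\equiv_\rem^{(r)}$-classes. For each $i$, take $(z_1,\dots,z_{r-1})=(y_j)_{j\ne i}$ as a witness. Then $\langle y_i,z_1,\dots,z_{r-1}\rangle=G$, while $\langle 1,z_1,\dots,z_{r-1}\rangle=\langle y_j:j\ne i\rangle$ is a proper subgroup (only $r-1$ generators), separating $y_i$ from~$1$. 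The same witness also separates $y_i$ from $y_k$ for any $k\ne i$: since $y_k$ already appears among the $z_j$'s, $\langle y_k,z_1,\dots,z_{r-1}\rangle$ collapses to the same proper subgroup $\langle y_j:j\ne i\rangle$, whereas $\langle y_i,z_1,\dots,z_{r-1}\rangle=G$. The mildly subtle point, and the only one beyond bookkeeping, is this ``collapse'' observation: a single witness per index $i$ does double duty, simultaneously separating $y_i$ from~$1$ and from every other $y_k$.
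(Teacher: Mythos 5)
Your proposal is correct and follows essentially the same route as the paper: part (1) by specialising one of the auxiliary elements to the identity, part (3) by observing the intersection of the $\equiv_\rem^{(r)}$ is $\equiv_\rem$ via the substitution characterisation, and part (2) by showing that the identity together with the elements of a minimum-size generating set are pairwise inequivalent. You merely spell out in more detail the witnesses that the paper leaves implicit.
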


\begin{proof}
(1) Choosing
$z_{r-1}$ to be the identity we see that $x\equiv_\rem^{(r)}y$ implies
$x\equiv_\rem^{(r-1)}y$. 

(2) The first claim is clear, for this second, notice that the
 identity and the elements of any $d(G)$-element
generating set are pairwise inequivalent.

(3) This is clear.
\end{proof}

\begin{defn}
Let $\psi(G)$ be the value of $r$ for which the equivalences
$\equiv_\rem^{(r)}$ stabilise, that is, the least $r$ such that
$\equiv_\rem^{(r)}$ coincides with the limiting relation $\equiv_\rem$.
\end{defn}

\subsection{Bounds on $\psi(G)$}

In this subsection, we 
prove various upper and lower
bounds on $\psi(G)$ in terms of other numerical invariants of $G$. 
We start with some straightforward lower bounds on $\psi(G)$.
\begin{lemma}\label{lem:lower}
Let $G$ be a finite group, and let $d = d(G)$. Then $\psi(G) \geq d$, and
if $G$ has a normal subgroup $N$ such that $N \not\leq \frat(G)$ and
$d(G/N) = d$, then $\psi(G)
\geq d + 1$.
\end{lemma}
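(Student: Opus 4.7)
The plan is to treat the two assertions separately, in both cases exploiting the fact that $\equiv_\rem^{(r)}$ becomes trivial whenever $r$ is too small to admit a generating set.

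For $\psi(G) \geq d$, I would first observe that if $r < d$ then neither $\langle x,z_1,\ldots,z_{r-1}\rangle$ nor $\langle y,z_1,\ldots,z_{r-1}\rangle$ can equal $G$ for any choice of the $z_i$, so the defining biconditional holds vacuously and $\equiv_\rem^{(r)}$ is the universal relation. On the other hand, $\equiv_\rem$ is non-universal for every non-trivial $G$: any maximal subgroup $M$ contains the identity but not every element of $G$, and the case $G = 1$ is trivial since then $d = 0$. This rules out $\psi(G) < d$.

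For the second bound I need to produce a pair of elements witnessing $\equiv_\rem^{(d)} \neq \equiv_\rem$. The hypothesis $N \not\leq \frat(G)$ yields a maximal subgroup $M$ of $G$ with $N \not\leq M$, so I can pick $n \in N \setminus M$. Since $1 \in M$ but $n \notin M$, we have $1 \not\equiv_\rem n$. I would then check that $1 \equiv_\rem^{(d)} n$ by verifying that for every $(z_1,\ldots,z_{d-1}) \in G^{d-1}$, both $\langle 1,z_1,\ldots,z_{d-1}\rangle = G$ and $\langle n,z_1,\ldots,z_{d-1}\rangle = G$ fail. The first fails because $d(G) = d$. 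The second is where the assumption $d(G/N) = d$ is crucial: projecting modulo $N$ would force $G/N$ to be generated by $d-1$ elements, contradicting $d(G/N) = d$. So both sides of the biconditional are always false and $1 \equiv_\rem^{(d)} n$, as required.

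The whole argument is a careful parsing of the definitions; the only genuine input is the observation that an element of $N$ is redundant modulo $N$, which is precisely why the hypothesis on $d(G/N)$ forces $n$ to be unusable in a length-$d$ generating set. I do not anticipate any real obstacle, so I would expect the write-up to be short once the witnesses $1$ and $n$ are identified.
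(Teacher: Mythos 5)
Your proof is correct and follows essentially the same route as the paper: the first inequality comes from $\equiv_\rem^{(r)}$ being universal for $r<d$ while $\equiv_\rem$ is not, and the second from observing that an element of $N$ outside $\frat(G)$ lies in no $d$-element generating set (by passing to $G/N$) and is therefore $\equiv_\rem^{(d)}$-equivalent but not $\equiv_\rem$-equivalent to the identity. The paper merely phrases the last step by noting that the $\equiv_\rem$-class of the identity is $\frat(G)$, which is the same fact you extract via a maximal subgroup avoiding $n$.
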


\begin{proof}
The first claim is immediate from Lemma~\ref{lem:m-basics}(2). 
For the second, notice that
elements of $N$ lie in no $d$-element generating set of $G$, and so
are $\equiv_\rem^{(d)}$-equivalent to the identity. However, the
$\equiv_\rem$-equivalence class of the identity is $\frat(G)$. 
\end{proof}

These lower bounds are best possible in a very strong sense: 
we know of no groups that do
not attain them.

\begin{prob}
Is it true that if $G$ is a finite group, then $\psi(G)  \in \{d(G),
d(G) + 1\}$?
\end{prob}

Whilst we are not able to answer this question in general, in the rest
of this subsection we prove some upper bounds on $\psi(G)$. In
particular, in Corollary~\ref{cor:sol} we show that if $G$ is soluble then
$\psi(G) \leq d(G) + 1$.

\begin{defn}Let $G$ be a finite group and 
let $M$ be a core-free maximal subgroup of $G$.
For every $g\in G\setminus M$,
let $\delta_{G,M}(g)$ be
the smallest cardinality of  a subset $X$ of $M$ with the property that
$G=\langle g,X\rangle$ and let
$$\nu_M(G)=\sup_{g\notin M}\delta_{G,M}(g).$$
\end{defn}
Notice that $\nu_M(G) \leq d(M)$.

\begin{defn}
Let $\tilde m(G)$ be the maximum of $\nu_{M/N}
(G/N)$  
over 
all maximal subgroups
$M$ of $G$, where $N = \Core_G(M)$.
\end{defn} 

\begin{theorem}\label{main}
$\psi(G)\leq \max\{\tilde m(G),d(G)\}+1.$
\end{theorem}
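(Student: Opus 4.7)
The plan is to argue the contrapositive: given $x \not\equiv_\rem y$, produce $z_1, \ldots, z_{r-1}$ witnessing $x \not\equiv_\rem^{(r)} y$ for $r = \max\{\tilde m(G), d(G)\} + 1$. Since $x \not\equiv_\rem y$, without loss of generality there is a maximal subgroup $M$ of $G$ with $y \in M$ and $x \notin M$; set $N = \Core_G(M)$, so that $\bar M := M/N$ is core-free and maximal in $\bar G := G/N$, and $\bar x \notin \bar M$. The whole strategy is to place every $z_i$ inside $M$: then $\langle y, z_1, \ldots, z_{r-1}\rangle \leq M \neq G$ is automatic, reducing the problem to finding $z_1, \ldots, z_{r-1} \in M$ with $\langle x, z_1, \ldots, z_{r-1}\rangle = G$.

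To handle the quotient, I would invoke the definition of $\nu_{\bar M}(\bar G)$ to obtain $\bar z_1, \ldots, \bar z_s \in \bar M$, with $s \leq \nu_{\bar M}(\bar G) \leq \tilde m(G) \leq r-1$, such that $\langle \bar x, \bar z_1, \ldots, \bar z_s\rangle = \bar G$; padding with $\bar z_{s+1} = \cdots = \bar z_{r-1} = \bar 1$ (whose lifts live freely in $N \subseteq M$) produces an $r$-tuple of cosets $(\bar x, \bar z_1, \ldots, \bar z_{r-1})$ generating $\bar G$ with its last $r-1$ entries in $\bar M$. Now pick any lifts $z_i \in M$ of $\bar z_i$.

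The decisive step is to show that these lifts may be chosen so that $\langle x, z_1, \ldots, z_{r-1}\rangle = G$ with $x$ itself as the first entry, rather than some translate $xn$. Since $r \geq d(G) + 1$, Gasch\"utz's lemma guarantees that some simultaneous $N$-shift $(x n_0, z_1 n_1, \ldots, z_{r-1} n_{r-1})$ is a generating tuple of $G$; the refinement required is that the shift $n_0$ can be taken trivial. Because $\bar x \neq \bar 1$ (as $1 \in M$ but $x \notin M$), I expect a Gasch\"utz-style counting argument to show that the set of good shifts $(n_0, n_1, \ldots, n_{r-1}) \in N^r$ distributes evenly across the $|N|$ possible values of $n_0$, so in particular the slice $n_0 = 1$ is non-empty. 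Setting $z'_i := z_i n_i \in M$ then supplies the required witnesses.

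The step I expect to be the main obstacle is precisely that last refinement: Gasch\"utz's lemma in its bare form controls only the total count, whereas I need to distribute this count across lifts of the first coordinate. The non-triviality of $\bar x$ rules out the degenerate behaviour that occurs over the identity coset, where for example the lift $x = 1$ can give a zero count while other lifts of $\bar 1$ need not. Establishing the equidistribution rigorously---most naturally by a M\"obius expansion of the Gasch\"utz count over the subgroup lattice of $G$, exploiting the symmetry of the supplements to $N$ and the fact that the padded slots in $N$ provide genuine slack when $r > d(G)$---is the technical heart. Once it is in hand, the tuple $(z'_1, \ldots, z'_{r-1}) \in M^{r-1}$ witnesses $x \not\equiv_\rem^{(r)} y$, establishing the theorem.
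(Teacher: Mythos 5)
Your overall strategy is exactly the paper's: argue the contrapositive, force all witnesses $z_i$ into $M$ so that $\langle y, z_1,\ldots,z_{r-1}\rangle\leq M$ is automatic, use $\nu_{M/N}(G/N)\leq\tilde m(G)$ to generate $G$ modulo $N=\Core_G(M)$ together with $x$, and then correct the $z_i$ by elements of $N\leq M$. However, the step you yourself flag as ``the technical heart'' is a genuine gap: you have not proved that the Gasch\"utz correction can be carried out with the shift $n_0$ on the $x$-coordinate taken to be trivial, and your proposed route --- exact equidistribution of good shift tuples over the $|N|$ values of $n_0$, to be established by a M\"obius expansion --- is both unproved and stronger than what you need (it is not clear that the counts for different lifts $xn_0$ are literally equal; what you actually require is only that the slice $n_0=1$ is non-empty).

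The paper closes precisely this gap by quoting a relative version of Gasch\"utz's lemma (Lemma~\ref{modg}, which is \cite[Lemma 6]{CL3}): if $X\subseteq G$, $N\trianglelefteq G$, $\langle g_1,\ldots,g_k,X\rangle N=G$ and $k\geq d_X(G)$, then there exist $n_1,\ldots,n_k\in N$ with $\langle g_1n_1,\ldots,g_kn_k,X\rangle=G$ --- the elements of $X$ are \emph{not} perturbed. Applying this with $X=\{x\}$, $k=t=\max\{\tilde m(G),d(G)\}$ and $g_1,\ldots,g_t\in M$ chosen so that $\langle x,g_1,\ldots,g_t\rangle N=G$ (possible since $t\geq\nu_{M/N}(G/N)$), and noting $t\geq d(G)\geq d_{\{x\}}(G)$, yields exactly the tuple you want with $x$ untouched and all corrected elements still in $M$. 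So your argument is repairable, but as written it rests on an unestablished claim; you should either cite the relative Gasch\"utz lemma or supply its (counting) proof, rather than the stronger equidistribution statement.
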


Before proving this 
result, we briefly recall a necessary definition and result. Given a subset
$X$ of 
a finite group $G,$ we will denote by $d_X(G)$ the smallest cardinality
of a set of elements of $G$ generating $G$ together with the elements 
of $X.$ The following 
generalizes
a result originally
obtained by W. Gasch\"utz \cite{Ga} for $X=\emptyset.$ 

\begin{lemma}[\cite{CL3} Lemma 6] \label{modg} Let $X$ be a subset of $G$ and $N$ a normal subgroup of $G$ and suppose that
	$\langle g_1,\dots,g_k, X\rangle N=G.$ 
	If $k\geq d_X(G),$ then there exist
$n_1,\dots,n_k\in N$ so that $\langle g_1n_1,\dots,g_kn_k,X\rangle=G.$
\end{lemma}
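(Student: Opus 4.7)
The plan is to prove this by induction on $|N|$, extending Gasch\"utz's classical argument (the case $X=\emptyset$) to handle the extra subset~$X$; the base case $|N|=1$ is immediate.

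\textbf{Reduction to a chief factor.} Suppose $N$ properly contains a nontrivial $G$-invariant subgroup $M$. Reducing modulo $M$, the hypothesis becomes $\langle g_1M,\dots,g_kM,XM/M\rangle(N/M)=G/M$; since $|N/M|<|N|$ and $d_{XM/M}(G/M)\le d_X(G)\le k$, induction applied to $(G/M,N/M,XM/M)$ yields $n_1,\dots,n_k\in N$ (lifted from $N/M$) with $\langle g_1n_1,\dots,g_kn_k,X\rangle M=G$. A second application of the inductive hypothesis to $(G,M,X)$ with these updated generators (valid since $|M|<|N|$ and $k\ge d_X(G)$) supplies $m_1,\dots,m_k\in M$ with $\langle g_1n_1m_1,\dots,g_kn_km_k,X\rangle=G$, so $n_i':=n_im_i\in N$ is the required adjustment. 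We may thus assume $N$ is a minimal normal subgroup of $G$.

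\textbf{Abelian case.} If $N$ is elementary abelian, I would run a Gasch\"utz-style counting argument. For $(n_1,\dots,n_k)\in N^k$, set $H:=\langle g_1n_1,\dots,g_kn_k,X\rangle$; then $HN=G$, and since $N$ is abelian, $H\cap N$ is normal in both $H$ and $N$, hence in $G$. By minimality of $N$ either $H=G$ or $H$ is a complement of $N$ in $G$ containing $X$; each such complement $K$ is hit by a unique tuple, because each coset $g_iN$ meets $K$ in a single element. Thus the number of good tuples equals $|N|^k$ minus the number of complements of $N$ in $G$ containing~$X$, and the latter is bounded above by the cardinality of the associated cocycle space, itself bounded by $|N|^{d_X(G/N)}$. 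A careful comparison using $k\ge d_X(G)\ge d_X(G/N)$ (the $X$-relative analogue of the classical Gasch\"utz strict inequality) gives strict inequality and hence at least one good tuple.

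\textbf{Nonabelian case --- the main obstacle.} The hardest step is when $N=T^r$ with $T$ nonabelian simple, because then $H\cap N$ need not be $G$-normal, breaking the abelian dichotomy. I would address this via the crown-theoretic approach to nonabelian chief factors (as developed by Dalla Volta and Lucchini): the supplements of $N$ in $G$ are governed by the $G$-module structure on~$N$ together with the induced action on its simple factors, and a refined counting argument still bounds the number of proper supplements containing~$X$ by a quantity strictly below $|N|^k$ whenever $k\ge d_X(G)$. This nonabelian count is where I expect the main technical work to lie.
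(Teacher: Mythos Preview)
The paper does not give a proof of this lemma: it is quoted verbatim as Lemma~6 of \cite{CL3}, with the remark that it generalises Gasch\"utz's classical lifting result to the setting with an auxiliary subset~$X$. There is therefore no in-paper argument against which to compare your proposal.

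As to your sketch itself: the reduction to a minimal normal subgroup is correct and standard. In the abelian case your counting is on the right track, but the step you label ``a careful comparison'' is precisely where the content lies. From $k\ge d_X(G)\ge d_X(G/N)$ you cannot directly conclude the \emph{strict} inequality $d_X(G/N)<k$ needed to make your bound on the number of complements beat $|N|^k$; when $k=d_X(G)=d_X(G/N)$ one must argue differently (for instance via the Gasch\"utz counting lemma that the number of generating lifts above a coset tuple is independent of which generating coset tuple is chosen, so that the existence of \emph{some} good $k$-tuple---guaranteed by $k\ge d_X(G)$---transfers to the specific cosets $g_iN$). In the nonabelian case you correctly identify the obstruction but then defer entirely to crown-based machinery without supplying an argument; that is a pointer to a proof rather than a proof. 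So your proposal is a reasonable outline of the standard approach, with the two genuinely nontrivial steps left open.
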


\noindent \textit{Proof of Theorem \ref{main}.}
Let  $t=\max\{\tilde m(G),d(G)\}.$ Since the relations
$\equiv_\rem^{(r)}$ become finer with $r$, it suffices to prove that
if 
$x$ and $y$ are two elements of $G$ and $x\not\equiv_\rem y,$ then 
$x \not\equiv_\rem^{(t+1)} y.$ So assume that
$x\not\equiv_\rem y.$ It is not restrictive to assume that 
there exists a maximal subgroup $M$ of $G$ such that 
$x\notin M$ and
$y\in M.$ Let $N=\Core_G(M)$ and let  $X=\{x \}.$ Since $t\geq \tilde m(G),$ we have $t\geq \nu_{M/N}(G/N),$ hence there exist $g_1,\dots,g_t\in M$ such that 
$\langle x, g_1,\dots,g_t\rangle N = G.$ Moreover
$t\geq d(G)\geq d_X(G).$ So we deduce from Lemma \ref{modg} that there
exist $n_1,\dots,n_t\in N$ such that $G=\langle
x,g_1n_1,\dots,g_tn_t\rangle.$ On the other hand $\langle
y,g_1n_1,\dots,g_tn_t\rangle \leq M.$  Hence $x
\not\equiv_\rem^{(t+1)} y.$ 
\hfill $\Box$

\medskip

We are now able to prove a tight upper bound on $\psi(G)$ for all
finite soluble groups $G$. 

\begin{cor}\label{cor:sol}
If $G$ is a finite soluble group, then $\psi(G)\leq d(G)+1.$ 
\end{cor}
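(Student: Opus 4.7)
The plan is to deduce this corollary directly from Theorem~\ref{main}, which gives $\psi(G)\leq \max\{\tilde m(G),d(G)\}+1$. Since the factor $d(G)+1$ is already present inside the $\max$, it suffices to prove the numerical inequality $\tilde m(G)\leq d(G)$ for every finite soluble group $G$.

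To bound $\tilde m(G)$, fix a maximal subgroup $M$ of $G$, set $N=\Core_G(M)$, and consider the quotient $\bar G = G/N$ with its core-free maximal subgroup $\bar M = M/N$. By the note immediately after the definition of $\nu_M(G)$ we have $\nu_{\bar M}(\bar G)\leq d(\bar M)$, so it is enough to show $d(\bar M)\leq d(G)$. This is where solubility enters: the quotient $\bar G$ is a soluble primitive group, so by the standard structure theorem for such groups it has a unique minimal normal subgroup $\bar V$ that is elementary abelian and is complemented by $\bar M$. Consequently $\bar M$ is isomorphic to the quotient $\bar G/\bar V$, and therefore
\[ d(\bar M) = d(\bar G/\bar V) \leq d(\bar G) \leq d(G). \]
Taking the supremum over maximal subgroups $M$ yields $\tilde m(G)\leq d(G)$, and feeding this into Theorem~\ref{main} gives $\psi(G)\leq d(G)+1$.

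The only step requiring care is the identification of $\bar M$ as a quotient of $\bar G$. Once one recalls that in a finite soluble group any core-free maximal subgroup is complemented by the (elementary abelian) socle, this is immediate; nothing more sophisticated is needed, and the remaining inequalities $d(\bar G/\bar V)\leq d(\bar G)\leq d(G)$ are formal. So the corollary is essentially a one-line consequence of Theorem~\ref{main} combined with the primitive soluble group structure, and there is no serious obstacle beyond assembling these ingredients.
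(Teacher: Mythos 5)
Your proposal is correct and follows essentially the same route as the paper: both reduce to showing $\tilde m(G)\leq d(G)$ via Theorem~\ref{main}, and both obtain this from the structure of a primitive soluble group, where the core-free maximal subgroup complements the (elementary abelian) socle and is therefore a quotient of $G/\Core_G(M)$, giving $\nu_{M/N}(G/N)\leq d(M/N)\leq d(G)$. Nothing further to add.
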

\begin{proof}
Let $M$ be a maximal subgroup of $G$, and let $K = \Core_G(M)$. 
Then $\tilde G  = G/K$ is a  soluble group with a faithful primitive
action on the
cosets of $M/K$, and $d(\tilde G)\leq d(G).$ Moreover
$M /K$ is a complement in $\tilde G$ of $\Soc (\tilde G),$ so 
$\nu_{M/K} (G /K)\leq d(M /K)=d(\tilde G/\Soc(\tilde G))\leq d(\tilde
G)\leq d(G).$
This holds for every maximal subgroup of $G$, so  $\tilde m(G)\leq d(G)$
and the conclusion follows from Theorem \ref{main}.
\end{proof}

Now we prove an upper bound on $\psi(G)$ for an arbitrary finite group $G$. 

\begin{cor}\label{cor:insol}
If $G$ is a finite group, then $\psi(G)\leq d(G)+5$. Furthermore, if
$G$ is simple, then $\psi(G) \leq 5$, and  if $G$ is almost simple
then $\psi(G) \leq 7$. 
\end{cor}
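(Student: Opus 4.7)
The plan is to apply Theorem~\ref{main}, which reduces the problem to bounding $\tilde m(G)$ from above. By definition $\tilde m(G) = \max_M \nu_{M/N}(G/N)$, where $M$ ranges over the maximal subgroups of $G$ and $N = \Core_G(M)$; combined with the inequality $\nu_{M/N}(G/N) \leq d(M/N)$ noted just before Theorem~\ref{main}, it suffices to bound $d(M/N)$, where $M/N$ is a core-free maximal subgroup of the primitive permutation group $G/N$.

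For the general bound $\psi(G)\leq d(G)+5$, I would invoke the known CFSG-based result that for a finite group $K$ with core-free maximal subgroup $H$ one has $d(H) \leq d(K)+4$. Applied with $K = G/N$ and $H = M/N$, this yields $d(M/N) \leq d(G/N)+4 \leq d(G)+4$, so $\tilde m(G) \leq d(G)+4$ and Theorem~\ref{main} gives $\psi(G) \leq d(G)+5$.

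For the simple case, every proper maximal subgroup $M$ of $G$ is core-free, so $\tilde m(G) \leq \max_M d(M) \leq 4$ by the classical CFSG-based bound on the number of generators of maximal subgroups of finite simple groups; with $d(G) = 2$ this gives $\psi(G) \leq 5$. For the almost simple case, the maximal subgroups $M$ split according to whether they contain the socle $S$. If $M \not\supseteq S$, the uniqueness of $S$ as minimal normal subgroup forces $\Core_G(M)=1$, and a refined CFSG-based bound $d(M) \leq 6$ applies; if $M \supseteq S$, then $G/\Core_G(M)$ is a soluble primitive quotient of $G/S$, and the argument of Corollary~\ref{cor:sol} gives $d(M/\Core_G(M)) \leq d(G)$. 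Combined with the bound $d(G) \leq 6$ valid for almost simple groups, Theorem~\ref{main} delivers $\psi(G) \leq 7$.

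The main obstacle is invoking the CFSG-dependent bounds: the uniform estimate $d(M/N) \leq d(G)+4$ for core-free maximal subgroups of primitive groups, and the sharper numerical bounds $d(M) \leq 4$ and $d(M) \leq 6$ for maximal subgroups of finite simple and almost simple groups respectively. Once these are in hand, everything else is a direct application of Theorem~\ref{main} together with the argument already used in Corollary~\ref{cor:sol}.
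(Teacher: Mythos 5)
Your proposal is correct and follows essentially the same route as the paper: both reduce the problem to bounding $\tilde m(G)$ via the inequality $\nu_{M/N}(G/N)\leq d(M/N)$ and then invoke the Burness--Liebeck--Shalev generation bounds (for point stabilisers of $d$-generated primitive groups, and for maximal subgroups of simple and almost simple groups), concluding with Theorem~\ref{main}. Your explicit case split in the almost simple setting (maximal subgroups containing the socle versus core-free ones) is slightly more detailed than the paper's terse ``follow in the same way'', but it is the same argument.
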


\begin{proof}
Burness, Liebeck and Shalev prove  (see \cite[Theorem 7]{bls}) 
that the point stabiliser of a $d$-generated finite primitive 
permutation group can be generated by $d+4$ elements. 
Hence if $G$ is a finite group, then
$\tilde m(G) \leq d(G)+4$ and our first claim 
follows from Theorem \ref{main}. 

In the same paper (see \cite[Theorems
1 and 2]{bls}) they show that any maximal subgroup of a finite simple
group can be generated by $4$ elements, and that any maximal subgroup
of an almost simple group can be generated by $6$ elements. Hence our
final two claims follow in the same way. 
\end{proof}



We conclude this subsection by
mentioning a relationship with another well-known parameter,
$\mu(G)$, the maximum size of a minimal generating set for $G$ (a
generating set for which no proper subset generates), studied by Diaconis
and Saloff-Coste, Whiston, Saxl, and others~\cite{ds,Jambor,whiston}.

\begin{prop}\label{prop:mu}
Let $G$ be a finite group. Then 
$\psi(G)\le \mu(G)$. Hence if $G = \psl_{2}(p)$ with $p \not\in \{7,
11, 19, 31\}$ then $\psi(G) \leq 3$, and $\psi(\psl_{2}(p)) \leq 4$ in
the remaining cases.
\label{p:mud}
\end{prop}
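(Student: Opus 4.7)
The plan is to prove $\psi(G)\le\mu(G)$ by showing that if $x\not\equiv_\rem y$ then $x\not\equiv_\rem^{(\mu(G))}y$; since $\equiv_\rem$ refines every $\equiv_\rem^{(r)}$ (if $x$ and $y$ lie in the same maximal subgroups, then any tuple fails to generate $G$ with $x$ iff it fails with $y$), this inequality is exactly what is needed. So suppose $x\not\equiv_\rem y$; without loss of generality pick a maximal subgroup $M$ of $G$ with $y\in M$ and $x\notin M$. The aim is to produce $\mu(G)-1$ elements $z_1,\dots,z_{\mu(G)-1}\in M$ such that $\langle x,z_1,\dots,z_{\mu(G)-1}\rangle=G$, because then $\langle y,z_1,\dots,z_{\mu(G)-1}\rangle\le M<G$, giving the required witness.

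To construct these $z_i$ I would argue as follows. Since $M$ is maximal and $x\notin M$, the set $\{x\}\cup M$ generates $G$. Take any finite generating subset and iteratively discard elements whose removal still leaves a generating set; the process terminates at a minimal generating subset $S\subseteq\{x\}\cup M$. The element $x$ cannot be dropped, because $M$ alone does not generate $G$, so $S=\{x,m_1,\dots,m_k\}$ with all $m_i\in M$. By the very definition of $\mu(G)$, $k+1\le\mu(G)$, so $k\le\mu(G)-1$. Padding with copies of the identity (which lies in $M$) turns this into the required $(\mu(G)-1)$-tuple in $M$.

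For the statements about $\psl_2(p)$, I would simply combine the inequality $\psi(G)\le\mu(G)$ just established with the known values of $\mu(\psl_2(p))$, obtained in the literature cited here: $\mu(\psl_2(p))=3$ when $p\notin\{7,11,19,31\}$ and $\mu(\psl_2(p))=4$ in the four exceptional cases. Substituting these values yields the two asserted bounds on $\psi(\psl_2(p))$.

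The first inequality is essentially a one-paragraph unpacking of the definitions, and no serious obstacle arises there. The substantive content of the proposition lies entirely in the case-by-case determination of $\mu(\psl_2(p))$, and that is exactly the work done elsewhere in the references.
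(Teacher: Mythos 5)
Your proof is correct, and it reaches the conclusion by a slightly different (contrapositive) route than the paper. The paper works forwards: assuming $x\equiv_\rem^{(\mu)}y$ and given an arbitrary generating set $\{x,z_1,\dots,z_{r-1}\}$, it extracts an irredundant generating subset, which has size at most $\mu(G)$; if that subset omits $x$ then $x$ was redundant, and if it contains $x$ then the hypothesis lets one substitute $y$. This verifies the substitution characterisation of $\equiv_\rem$ directly. You instead prove that $x\not\equiv_\rem y$ implies $x\not\equiv_\rem^{(\mu)}y$ by choosing a maximal subgroup $M$ separating $x$ from $y$ and extracting an irredundant generating subset of $\{x\}\cup M$, which must contain $x$ and hence supplies at most $\mu(G)-1$ elements of $M$ that generate $G$ together with $x$; padding with identities (which lie in $M$) gives the witness tuple. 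This is the same device as in the paper's proof of Theorem~\ref{main} --- in effect you show $\delta_{G,M}(x)\le\mu(G)-1$ for every core-free or not maximal $M$ and every $x\notin M$ --- but applied directly rather than via that theorem, which would not quite give the bound $\mu(G)$ on its own. Both arguments rest on the same key fact, namely that every generating set of a finite group contains an irredundant generating subset of size at most $\mu(G)$, and both are equally elementary; your version has the small additional virtue of exhibiting an explicit separating $\mu$-tuple inside $M$. The deduction for $\psl_2(p)$ from the values of $\mu(\psl_2(p))$ in the cited literature is identical to the paper's.
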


\begin{proof} 
To prove that $\psi(G)\le\mu(G)$,
we show that if $\mu=\mu(G)$, and
$x\equiv_\rem^{(\mu)}y$, then $x\equiv_\rem y$. So suppose that 
$x\equiv_\rem^{(\mu)}y$, and let $G=\langle x,z_1,\ldots,z_{r-1}\rangle$.

\noindent \textbf{Case $r\le\mu$.} Since the relations $\equiv_\rem^{(r)}$ get
finer as $r$ increases, in this
case $G=\langle y,z_1,\ldots,z_{r-1}\rangle$.

\noindent \textbf{Case $r>\mu$.} In this case, our generating set is larger than
$\mu$, and so some element is redundant.
If $x$ is redundant, then $G=\langle z_1,\ldots,z_{r-1}\rangle
=\langle y,z_1,\ldots,z_{r-1}\rangle$, as required.
Suppose that $x$ is not redundant. Then $G$ is generated by a subset of the
given generators of size $\mu$ including $x$, without loss of generality
$\{x,z_1,\ldots,z_{\mu-1}$\}. Since, by assumption, $x\equiv_\rem^{(\mu)}y$, we
have $G=\langle y,z_1,\ldots,z_{\mu-1}\rangle
=\langle y,z_1,\ldots,z_{r-1}\rangle$.

The final claim follows from \cite{Jambor}, where the stated bounds on
$\mu(\psl_{2}(p))$ are determined.
\end{proof}

In general $\mu(G)$ can be much larger than $d(G)$. For example,
if $G$ is soluble, than $m(G)-d(G)\geq \pi(G)-2$ (see \cite[Corollary 3]{lucchini13})
and in any case $\mu(G)$ is at least the number of complemented factors in a
chief series of $G$ (see \cite[Theorem 1]{lucchini13}). 
Hence the difference $\mu(G)-d(G)$
(and consequently, by Corollary 1.10, the difference $\mu(G)-\psi(G)$)
can be arbitrarily large. 

\subsection{Groups with $\psi(G) = d(G)$}

In this subsection, we study groups $G$ for which $\psi(G) = d(G)$;
in particular in Theorem~\ref{thm:small_d} we describe the structure
of such soluble groups $G$.

\begin{defn}\label{def:efficient}
A finite group $G$ is \emph{efficiently generated} if for all $x\in G$, 
$d_{\{x\}}(G)=d(G)$ implies that $x\in \frat(G).$
\end{defn}

\begin{lemma}\label{lemma1}
If $\psi(G)=d(G),$ then
$G$ is efficiently generated.
\end{lemma}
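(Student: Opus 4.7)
The plan is to show directly that $x \equiv_\rem^{(d)} 1$ for any $x$ with $d_{\{x\}}(G) = d(G) =: d$, and then use the assumption $\psi(G) = d$ to conclude $x \equiv_\rem 1$, which places $x$ in the Frattini subgroup (since the $\equiv_\rem$-class of the identity is $\frat(G)$, as noted right after the definition of $\equiv_\rem$).

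First I would unpack what $d_{\{x\}}(G) = d$ really says. Since any generating set of minimum size gives $d_{\{x\}}(G) \le d$, and conversely $d(G) \le d_{\{x\}}(G) + 1$, the value $d_{\{x\}}(G)$ must be $d-1$ or $d$. The equality $d_{\{x\}}(G) = d$ is therefore equivalent to saying that $x$ cannot be completed to a $d$-element generating set: there is no $(d-1)$-tuple $(z_1,\dots,z_{d-1})$ with $\langle x, z_1, \dots, z_{d-1}\rangle = G$.

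Next I would verify $x \equiv_\rem^{(d)} 1$ by going back to the definition. For arbitrary $z_1,\dots,z_{d-1}\in G$, the condition $\langle 1, z_1,\dots,z_{d-1}\rangle = G$ is just $\langle z_1,\dots,z_{d-1}\rangle = G$, which never holds since $d(G) = d$. And $\langle x, z_1,\dots,z_{d-1}\rangle = G$ also never holds, by the preceding paragraph. So both sides of the biconditional in the definition of $\equiv_\rem^{(d)}$ are uniformly false, giving $x \equiv_\rem^{(d)} 1$.

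Finally, using $\psi(G) = d$, the relation $\equiv_\rem^{(d)}$ coincides with $\equiv_\rem$, so $x \equiv_\rem 1$, i.e.\ $x \in \frat(G)$, as required. There is no real obstacle here: the argument is a direct unwinding of definitions, with the only point to be careful about being the (elementary) observation that $d_{\{x\}}(G) = d(G)$ is exactly the statement that $x$ lies in no $d(G)$-element generating set of $G$.
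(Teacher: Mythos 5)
Your proof is correct and is essentially the paper's own argument: both rest on the observation that $d_{\{x\}}(G)=d(G)$ forces $x\equiv_\rem^{(d)}1$ (since neither $x$ nor $1$ completes to a $d$-element generating set), after which $\psi(G)=d(G)$ gives $x\equiv_\rem 1$, i.e.\ $x\in\frat(G)$. The paper phrases this contrapositively and more tersely, but the content is identical; your extra unpacking of why $d_{\{x\}}(G)\in\{d-1,d\}$ is harmless.
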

\begin{proof} Let $d=d(G).$
If $G$ is not efficiently generated, then there exists $x\notin
\frat(G)$ such that $d_{\{x\}}(G)=d.$ 
This implies in particular $x \equiv_\rem^{(d)} 1.$ However
since $x\notin \frat(G),$ we have $x \not\equiv_\rem 1$, hence $\psi(G)>d.$
\end{proof}

\begin{lemma}\label{lemma2}
If $G$ is efficiently generated and $\tilde m(G)< d(G),$ then $\psi(G)=d(G).$
\end{lemma}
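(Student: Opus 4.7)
Since $\psi(G) \geq d(G)$ always holds by Lemma \ref{lem:lower}, writing $d = d(G)$, the plan is to establish the reverse inequality $\psi(G) \leq d$ under the stated hypotheses. Equivalently, it suffices to show that whenever $x \not\equiv_\rem y$, we have $x \not\equiv_\rem^{(d)} y$, i.e., to produce a $(d-1)$-tuple $(z_1,\ldots,z_{d-1})$ such that $\langle x, z_1,\ldots, z_{d-1}\rangle = G$ while $\langle y, z_1,\ldots, z_{d-1}\rangle \neq G$.

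The strategy is to mimic the proof of Theorem \ref{main}, but with one element saved: the efficient generation hypothesis will replace the crude bound $d_X(G) \leq d(G)$ used there by the sharper bound $d_X(G) \leq d(G) - 1$ for our chosen $X$. Concretely, given $x \not\equiv_\rem y$, pick (without loss of generality) a maximal subgroup $M$ with $x \notin M$ and $y \in M$, and set $N = \Core_G(M)$. Since $\frat(G)$ lies in every maximal subgroup and $x \notin M$, we have $x \notin \frat(G)$; hence the contrapositive of Definition \ref{def:efficient} gives $d_{\{x\}}(G) \leq d - 1$. Meanwhile, the hypothesis $\tilde m(G) < d$ gives $\nu_{M/N}(G/N) \leq d - 1$, which furnishes $g_1,\ldots,g_{d-1} \in M$ with $\langle x, g_1,\ldots,g_{d-1}\rangle N = G$.

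Now apply Lemma \ref{modg} with $X = \{x\}$, the normal subgroup $N$, and $k = d - 1$; both hypotheses $k \geq d_X(G)$ and $\langle x, g_1,\ldots, g_{d-1}\rangle N = G$ have just been verified. The lemma yields $n_1,\ldots, n_{d-1} \in N$ with $G = \langle x, g_1 n_1, \ldots, g_{d-1} n_{d-1}\rangle$. Since every $g_i n_i$ lies in $M$, we have $\langle y, g_1 n_1, \ldots, g_{d-1} n_{d-1}\rangle \leq M \lneq G$, so $x \not\equiv_\rem^{(d)} y$, completing the argument.

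I do not foresee a significant obstacle; the proof is a careful bookkeeping refinement of Theorem \ref{main}. The only real observation needed is that the $x$ produced by $x \not\equiv_\rem y$ automatically avoids $\frat(G)$, which is exactly what is required to trigger the efficient generation hypothesis and drop the generator count from $d$ to $d-1$—matching precisely the one-element weakening of the bound on $\tilde m(G)$.
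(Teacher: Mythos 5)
Your proof is correct, and it runs on the same engine as the paper's — the Gasch\"utz-type Lemma~\ref{modg} applied with $X=\{x\}$ to generators of $M$ modulo $N=\Core_G(M)$ — but your organisation is genuinely cleaner. The paper first invokes Theorem~\ref{main} to get $\psi(G)\le d+1$, then proves $x\not\equiv_\rem^{(d+1)}y\Rightarrow x\not\equiv_\rem^{(d)}y$ via a case analysis on $d_{\{x\}}(G)$ and $d_{\{y\}}(G)$, and only in the final subcase $d_{\{x\}}(G)=d_{\{y\}}(G)=d-1$ does it reach the argument you give. You bypass both the appeal to Theorem~\ref{main} and the case split by a single observation: once $M$ is chosen (after relabelling $x$ and $y$, which is legitimate since $\not\equiv_\rem^{(d)}$ is symmetric) with $x\notin M$, the containment $\frat(G)\le M$ forces $x\notin\frat(G)$, so efficient generation immediately yields $d_{\{x\}}(G)\le d-1$ — exactly the bound needed to run Lemma~\ref{modg} with $k=d-1$. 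Your direct implication $x\not\equiv_\rem y\Rightarrow x\not\equiv_\rem^{(d)}y$ gives $\equiv_\rem^{(d)}=\equiv_\rem$ outright, and together with the lower bound $\psi(G)\ge d(G)$ from Lemma~\ref{lem:lower} this proves the statement; the paper's longer route buys nothing extra here.
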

\begin{proof} Let $d=d(G)$. 
By Theorem \ref{main}, our assumption that $\tilde m(G) < d(G)$
implies that $\psi(G)\leq d+1$, and hence that $\equiv_\rem^{(d+1)}$ 
coincides with $\equiv_\rem$. It therefore
suffices to prove that if $x \not \equiv_\rem^{(d+1)} y,$ then 
$x \not \equiv_\rem^{(d)} y.$ 

Assume that $x \not \equiv_\rem^{(d+1)} y$ 
and let $d_x=d_{\{x\}}(G)$ and $d_y=d_{\{y\}}(G).$ It is clear that $d_x, d_y \geq
d-1$.
If $d_x = d_y = d$,  then our assumption that $G$ is efficiently
generated implies that
$x,y \in \frat(G)$,  and hence that $x \equiv_\rem y$, a contradiction. 
Therefore we may assume that $d_x=d-1;$ in particular 
$G=\langle x,g_1,\dots,g_{d-1}\rangle$ for some 
$g_1,\dots,g_{d-1}\in G.$ If $d_y=d,$ then
$G\neq \langle y,g_1,\dots,g_{d-1}\rangle$ and therefore $x
\not \equiv_\rem^{(d)} y$, and we are done.

	So assume that $d_x=d_y=d-1$. Since $x \not\equiv_\rem y$,
        without loss of generality there 
exists a maximal subgroup $M$ of $G$ such that $x\notin M,$
$y\in M.$ Let $N=\Core_G(M).$ Since $d-1\geq \tilde m(G),$  
there exist $g_1,\dots,g_{d-1}\in M$ such that
	$\langle x, g_1,\dots,g_{d-1}\rangle N = G.$ As $d_x=d-1,$  we deduce from Lemma \ref{modg} that there exist $n_1,\dots,n_{d-1}\in N$ such that $G=\langle x,g_1n_1,\dots,g_{d-1}n_{d-1}\rangle.$ On the other hand $\langle y,g_1n_1,\dots,g_{d-1}n_{d-1}\rangle \leq M.$  Hence $x \not\equiv_\rem^{(d)} y.$
\end{proof}

Notice that if $d(M)<d(G)$ for every maximal subgroup $M$ of $G$, then $G$ is efficiently generated. Indeed if $x\notin \frat(G),$ then there exists a maximal subgroup $M$ of $G$ with $x\notin M$ and consequently $d_{\{x\}}(G)\leq d(M)<d(G).$
But then from Lemma~\ref{lemma2} 
we deduce the following result.

\begin{cor}  If $d(M)<d(G)$ for every maximal subgroup $M$ of $G$, then $\psi(G)=d(G).$ \end{cor}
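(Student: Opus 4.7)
The plan is to deduce this corollary as a direct application of Lemma~\ref{lemma2}, whose two hypotheses are that $G$ is efficiently generated and that $\tilde m(G) < d(G)$. Both follow from the assumption that every maximal subgroup is strictly cheaper to generate than $G$ itself.

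First I would establish efficient generation, along the lines sketched in the paragraph immediately preceding the statement. If $x \notin \frat(G)$, then some maximal subgroup $M$ of $G$ avoids $x$. Combining any $d(M)$-element generating set of $M$ with the singleton $\{x\}$ produces a generating set of $G$, so $d_{\{x\}}(G) \leq d(M) < d(G)$. Hence by Definition~\ref{def:efficient}, no element outside $\frat(G)$ can witness a failure of efficient generation, and so $G$ is efficiently generated.

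Next I would bound $\tilde m(G)$. For an arbitrary maximal subgroup $M$ of $G$, set $N = \Core_G(M)$; then $M/N$ is a core-free maximal subgroup of $G/N$ by the correspondence theorem. The inequality $\nu_M(G) \leq d(M)$ noted immediately after the definition of $\nu_M(G)$ gives $\nu_{M/N}(G/N) \leq d(M/N)$, and since any generating set of $M$ projects to a generating set of $M/N$ we have $d(M/N) \leq d(M) < d(G)$. Taking the maximum over all maximal subgroups $M$ yields $\tilde m(G) < d(G)$.

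Having verified both hypotheses of Lemma~\ref{lemma2}, I would conclude $\psi(G) = d(G)$. No step presents a substantive obstacle here; the argument is really just a synthesis of Definition~\ref{def:efficient}, the elementary bound $\nu_M(G) \leq d(M)$, and Lemma~\ref{lemma2}.
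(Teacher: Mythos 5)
Your proposal is correct and follows essentially the same route as the paper: the paper's argument is exactly the observation that the hypothesis forces $G$ to be efficiently generated (via $d_{\{x\}}(G)\leq d(M)<d(G)$ for any maximal $M$ avoiding $x$), followed by an appeal to Lemma~2.14. You merely make explicit the verification that $\tilde m(G)<d(G)$, which the paper leaves implicit via the remark $\nu_M(G)\leq d(M)$; this is a worthwhile clarification but not a different proof.
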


\begin{lemma}\label{lem:soleffic}
Let $G$ be a finite soluble group. If $G$ is efficiently generated then $\tilde m(G)<d(G).$
\end{lemma}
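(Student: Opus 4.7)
The plan is to fix a maximal subgroup $M$ of $G$ and prove the uniform bound $\nu_{M/K}(G/K) \leq d(G)-1$, where $K = \Core_G(M)$; this will yield $\tilde m(G) < d(G)$. First I would pass to the quotient $\bar G = G/K$, in which $\bar M = M/K$ is core-free maximal. Since $G$ is soluble, $\bar G$ is primitive soluble, so its socle $\bar S := \Soc(\bar G)$ is the unique minimal normal subgroup of $\bar G$, is elementary abelian, and $\bar G = \bar S \rtimes \bar M$; in particular $\bar G/\bar S \cong \bar M$.

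The key step is to show $d(\bar M) \leq d(G)-1$. Let $S$ be the preimage of $\bar S$ in $G$, so that $S \trianglelefteq G$ with $K \leq S$. Since $\bar S \not\leq \bar M$ we have $S \not\leq M$, and therefore $S \not\leq \frat(G)$ (as $\frat(G) \leq M$). Choose $x \in S \setminus \frat(G)$. Writing $d = d(G)$, the hypothesis of efficient generation gives $d_{\{x\}}(G) \leq d-1$, so $G = \langle x, y_1,\ldots,y_{d-1}\rangle$ for some $y_i \in G$. Reducing modulo $S$ kills $x$, yielding $G/S = \langle \bar y_1,\ldots,\bar y_{d-1}\rangle$, so $d(\bar M) = d(G/S) \leq d-1$.

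With this in hand, fix an arbitrary $\bar g \in \bar G \setminus \bar M$; I must produce $d-1$ elements of $\bar M$ that, together with $\bar g$, generate $\bar G$. Pick $\bar m_1,\ldots,\bar m_{d-1} \in \bar M$ generating $\bar M$, and decompose $\bar g = \bar s \bar m$ under the semidirect product, with $\bar s \in \bar S \setminus \{1\}$ and $\bar m \in \bar M$. Set $H := \langle \bar g, \bar m_1,\ldots,\bar m_{d-1}\rangle$. Then $H \supseteq \bar M$, so $\bar m \in H$ and hence $\bar s = \bar g \bar m^{-1} \in H \cap \bar S$. Since $\bar S$ is abelian and $\bar M \leq H$, the subgroup $H \cap \bar S$ is normalized by $\bar S \bar M = \bar G$; minimality of $\bar S$ forces $H \cap \bar S = \bar S$, and then $H = \bar G$. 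This gives $\delta_{\bar G,\bar M}(\bar g) \leq d-1$ for every $\bar g \notin \bar M$, so $\nu_{\bar M}(\bar G) \leq d-1$, and since $M$ was arbitrary, $\tilde m(G) \leq d-1 < d(G)$.

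The main obstacle is the inequality $d(\bar M) \leq d-1$: this is where efficient generation is used, and where solubility is essential via the quotient identification $\bar M \cong \bar G/\bar S$. Without a complement to the socle, one could not read $d(\bar M)$ off as a generator count for a quotient of $G$, and the trick of killing $x$ by factoring out $S$ would not supply any information about $\bar M$. Once this inequality is in place the rest is a short density argument using the abelianness and minimality of $\bar S$.
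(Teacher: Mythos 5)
Your proof is correct and is essentially the paper's argument in contrapositive form: both identify $M/\Core_G(M)$ with $G/S$ via the splitting of the primitive soluble quotient over its socle, pick an element of the socle's preimage outside $\frat(G)$, and use efficient generation to force $d(M/\Core_G(M))\le d(G)-1$. Your final paragraph re-deriving $\nu_{\bar M}(\bar G)\le d(\bar M)$ via minimality of $\bar S$ is fine but unnecessary, since $\langle \bar g,\bar M\rangle=\bar G$ follows immediately from maximality of $\bar M$, which is the observation the paper records right after defining $\nu_M(G)$.
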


\begin{proof} If suffices to prove that for every maximal subgroup $M$ of $G,$ we have
	$d(M/\Core_G(M))<d(G)=d.$
Assume otherwise. Then there exists a maximal subgroup $M$ of $G$ such
that $d(M/N)=d$ (where $N=\Core_G(M)$). Furthermore, $G/N=A/N : M/N$
and 
$\frat(G)\leq N.$ Let $a\in A \setminus \frat(G)$. Then $d_{\{a\}}(G)=d,$
contradicting the assumption that $G$ is efficiently generated. 
\end{proof}

The following result is now immediate from Lemmas~\ref{lemma1}
 and \ref{lem:soleffic}.

\begin{cor}\label{psisol}
Let $G$ be a finite soluble group. 
Then $\psi(G)  = d(G)$ if and only if $G$ is efficiently generated.
\end{cor}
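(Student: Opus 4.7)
The proof is a one-line synthesis of the three preceding lemmas, so my plan is simply to verify that they chain together cleanly in both directions.

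For the forward implication, I would observe that $\psi(G) = d(G)$ implies $G$ is efficiently generated with no solubility assumption needed: this is exactly Lemma~\ref{lemma1}, whose proof relies only on the fact that any $x \notin \frat(G)$ is in a different $\equiv_\rem$-class from the identity, so if $d_{\{x\}}(G) = d(G)$ then $\equiv_\rem^{(d)}$ collapses $x$ to $1$ while $\equiv_\rem$ does not, forcing $\psi(G) > d(G)$.

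For the reverse implication, I would first invoke Lemma~\ref{lem:soleffic}: solubility of $G$ combined with efficient generation yields $\tilde m(G) < d(G)$. This is precisely the hypothesis required to apply Lemma~\ref{lemma2}, which then delivers $\psi(G) = d(G)$. Since $\psi(G) \geq d(G)$ always holds by Lemma~\ref{lem:lower}, equality follows.

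There is no real obstacle here, as the corollary is a formal consequence of results already established in the subsection. The only thing worth flagging is that solubility enters only in the backward direction, through Lemma~\ref{lem:soleffic}; the forward direction is general. So the proof reduces to writing
\[
\psi(G) = d(G) \;\stackrel{\text{Lem.~\ref{lemma1}}}{\Longrightarrow}\; G \text{ efficiently generated} \;\stackrel{\text{Lem.~\ref{lem:soleffic}}}{\Longrightarrow}\; \tilde m(G) < d(G) \;\stackrel{\text{Lem.~\ref{lemma2}}}{\Longrightarrow}\; \psi(G) = d(G),
\]
which closes the circle and proves the equivalence.
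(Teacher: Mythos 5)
Your proof is correct and is essentially the paper's own argument: the corollary is stated there as immediate from the preceding lemmas, and your three-step chain (Lemma~\ref{lemma1} for the forward direction, Lemma~\ref{lem:soleffic} followed by Lemma~\ref{lemma2} for the converse) is exactly the intended synthesis. If anything, you are slightly more explicit than the paper, which cites only Lemmas~\ref{lemma1} and \ref{lem:soleffic} while tacitly relying on Lemma~\ref{lemma2} to close the loop.
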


\begin{theorem}\label{thm:small_d}
A finite soluble group $G$  satisfies $\psi(G) = d(G)$ 
if and 
only if either $G$ is a finite $p$-group or there
exist a finite vector space $V$, a nontrivial irreducible soluble subgroup $H$ of $\Aut(V)$
and an integer $d>d(H)$ such that
$$G/\frat(G)\cong V^{r(d-2)+1} : H,$$
where $r$ is the dimension of $V$ over $\End_H(V)$ and $H$ acts in the same way on
each of the $r(d-2)+1$ factors.
\end{theorem}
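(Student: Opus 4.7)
Corollary \ref{psisol} reduces the statement to classifying efficiently generated finite soluble groups. A Gasch\"utz-style argument (applying Lemma \ref{modg} with $N=\frat(G)$) shows that $d(G)$ and $d_{\{x\}}(G)$ depend only on the image of $x$ in $G/\frat(G)$; hence $G$ is efficiently generated iff $G/\frat(G)$ is, and we may assume $\frat(G)=1$. If $G$ is a $p$-group then $G$ is elementary abelian and every nonidentity element extends to a basis, yielding the first case of the theorem. Henceforth assume $G$ is not a $p$-group, write $d=d(G)$, and note that $\tilde m(G)<d$ by Lemma \ref{lem:soleffic}.

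The key structural step is to show that the socle has a unique $G$-isomorphism type of minimal normal subgroup. Write $\Soc(G)=\bigoplus_i V_i^{\delta_i}$ with the $V_i$ pairwise non-$G$-isomorphic irreducible $G$-modules, and set $H=G/\Soc(G)$, $r_i=\dim_{\End_H(V_i)}(V_i)$. Assume for contradiction that two types $V_1,V_2$ with $\delta_1,\delta_2\geq 1$ both occur. The Gasch\"utz--Lucchini formula for $d$ of a completely reducible semidirect product gives
\[d(G) = \max\Bigl\{d(H),\ \max_i\bigl(\lceil\delta_i/r_i\rceil+1\bigr)\Bigr\}.\]
For $x$ in a single copy of $V_i$, $\langle x^G\rangle=V_i$ (since $\Soc(G)$ is abelian), and the analogous formula applied to $G/\langle x^G\rangle$ yields $d_{\{x\}}(G)=d(G/\langle x^G\rangle)$, which retains every contribution from the summands $V_j$ with $j\neq i$. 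Whichever type $i$ attains the outer maximum, choosing $x$ in one of the other summands forces $d_{\{x\}}(G)\geq d$, contradicting efficient generation. Hence $\Soc(G)=V^\delta$ for a single $V$, and $G=V^\delta\rtimes H$ with $H$ a soluble subgroup of $\Aut(V)$ acting faithfully and irreducibly; the non-$p$-group hypothesis forces $H\neq 1$.

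To pin down $\delta$, the single-type formula gives $d(G)=\max\{d(H),\ \lceil\delta/r\rceil+1\}=d$, while for $x$ in a single copy of $V$, $d_{\{x\}}(G)=d(V^{\delta-1}\rtimes H)=\max\{d(H),\ \lceil(\delta-1)/r\rceil+1\}$. Efficient generation $d_{\{x\}}(G)<d$ combined with $d(G)=d$ forces simultaneously $\lceil\delta/r\rceil=d-1$ and $\lceil(\delta-1)/r\rceil\leq d-2$, which pins down $\delta=r(d-2)+1$ and $d(H)\leq d-1$. Applying efficient generation to a lift of an element of a minimum generating set of $H$ then rules out $d(H)=d$, yielding $d(H)<d$. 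Conversely, assuming the stated structure, the generation formula gives $d(G)=d$, and a case analysis (on whether the image of $x$ in $H$ is trivial, using a Gasch\"utz lift for the socle coordinates when it is) produces, for every $x\notin\frat(G)$, a generating set of size $d$ containing $x$, so $d_{\{x\}}(G)\leq d-1$.

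The main obstacle is the uniqueness of the chief factor type: given two non-isomorphic irreducible summands in the socle, one must use the Gasch\"utz--Lucchini formula to locate an element $x$ with $d_{\{x\}}(G)\geq d$, and this requires a careful case analysis on which summand dominates the generation count and a delicate interplay with Lemma \ref{modg}. Once uniqueness is established, the arithmetic pinning down $\delta$ and $d(H)$, and the converse direction, are essentially bookkeeping.
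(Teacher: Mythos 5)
Your reduction to classifying efficiently generated soluble groups via Corollary~\ref{psisol}, and the passage to $G/\frat(G)$, match the paper. But from there the paper takes a much shorter route that you miss: efficient generation forces $d(G/N)<d(G)$ for \emph{every} normal $N\not\leq\frat(G)$ (if $d(G/N)=d$ then $d_{\{n\}}(G)=d$ for all $n\in N$), so $G/\frat(G)$ needs more generators than any proper quotient, and such soluble groups are already classified in \cite[Theorems 1.4 and 2.7]{dvl} --- this is exactly where the structure $V^{r(d-2)+1}:H$ comes from. You instead try to reprove that classification from scratch, and the attempt has real gaps. The generation formula $d(G)=\max\{d(H),\max_i(\lceil\delta_i/r_i\rceil+1)\}$ is asserted without proof and is not correct as stated: the correct crown-theoretic formula counts \emph{all} complemented chief factors $G$-equivalent to $V_i$ in a chief series (not just the socle multiplicities --- copies of $V_i$ hidden inside $H=G/\Soc(G)$ contribute, equivalently a cohomological term $\dim H^1(H,V_i)$ appears when $C_H(V_i)\neq 1$), and it treats central homogeneous components incorrectly ($C_p^{\delta}\times H$ does not need $\delta+1$ generators). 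Moreover you explicitly defer the key homogeneity step to ``a careful case analysis,'' so the heart of the forward direction is not actually carried out.

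The converse is also not ``essentially bookkeeping.'' The case $x=(v_1,\dots,v_n)h$ with $h\neq 1$ is the hard one, and your sketch says nothing about it: Lemma~\ref{modg} cannot be applied with $k=d-1$ without already knowing $d_{\{x\}}(G)\leq d-1$, which is circular. The paper resolves this with the precise criterion of \cite[Lemma 5]{bias}: $d_{\{x\}}(X)\leq d-1$ if and only if $a+b-1<r(d-1)$, where $a=\dim_F C_V(h)$ and $b=n-\dim_F\langle [V,h],v_1,\dots,v_n\rangle+\dim_F[V,h]$, followed by a short estimate of $a$ and $b$ in the two cases $h=1$ and $h\neq 1$. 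Some such quantitative input (a bias/counting result for lifts, not a bare Gasch\"utz lift) is unavoidable here, and its absence is a genuine gap in your argument.
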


\begin{proof}
Assume that $G$ is soluble group with $\psi(G) = d(G) = d$ 
and let $F=\frat(G)$. By Corollary~\ref{psisol}, $G$ is efficiently
generated.  If $N$ is a normal subgroup of $G$ 
properly containing $F,$ then
$d(G/N)<d$ (otherwise we would have $d_{\{n\}}(G)=d$ for every $n \in
N$). 
So $G/F$ has the property that every proper quotient can be generated by $d-1$ elements, but $G/F$ cannot. The groups with this property have been studied in \cite{dvl}. By \cite[Theorem 1.4 and Theorem 2.7]{dvl} either $G/F$ is an elementary abelian $p$-group of rank $d$ (and consequently $G$ is a finite $p$-group) or there exist a finite vector space $V$ and a nontrivial irreducible soluble subgroup $H$ of $\Aut(V)$
such that $d(H)<d$ and $G/\frat(G)\cong V^{r(d-2)+1} :  H,$
where $r$ is the dimension of $V$ over $\End_H(V).$ 

Conversely, if $G$ is a finite $p$-group it follows 
immediately from Burnside's basis theorem that $G$ is 
efficiently generated, and so $\psi(G) = d(G)$ by Corollary~\ref{psisol}.
Clearly a group $G$ is efficiently generated if and only if 
$G/\frat(G)$ is efficiently generated. So to conclude the 
proof it suffices to prove that if $H$ is a $(d-1)$-generated 
soluble irreducible subgroup of $\Aut(V)$ and $r$ is the 
dimension of $V$ over $F=\End_H(V),$ then
$X= V^{r(d-2)+1} :  H$ is efficiently generated. Notice that 
$d(X)=d$, so we have to prove that $d_{\{x\}}(X)\leq d-1$ for 
every $x\neq 1$. Let $n=r(d-2)+1.$ Fix a nontrivial element 
$x=(v_1,\dots,v_n)h\in X$
and let $a=\dim_F C_V(h)$ and $b=n-\dim_F \langle [V,h],
v_1,\dots,v_n\rangle+\dim_F [V,h].$
By \cite[Lemma 5]{bias} we have $d_{\{x\}}(X)\leq d-1$ if and 
only if $a+b-1<r(d-1).$
 If $h\neq 1,$ then $a\leq r-1$ and $b\leq n;$ if $h=1,$ 
then $a\leq r$ and $b\leq n-1.$ In any case
 $a+b-1\leq r+n-2 = r+r(d-2)-1 < r(d-1).$
\end{proof}

Apart from $p$-groups, there are many examples of soluble groups that
are 
efficiently generated. The smallest example of a soluble group which is not
efficiently generated is $\sym_4$ (we have $d_{\{x\}}(\sym_4)=2$ for every
$x$ in the Klein subgroup): by the previous results we can conclude
that $\psi(\sym_4)=3$.

\begin{prob} 
Characterise the insoluble groups that are efficiently generated.
\end{prob}

\subsection{Calculating $\equiv_\rem$}\label{subsec:calculate}

Whilst we have not been able to determine $\psi(G)$ for an arbitrary
group $G$, we have calculated it for many small almost simple groups $G$
with $d(G) = 2$. It is computationally expensive to repeatedly
calculate whether various sets of elements generates a group. In
this subsection we describe an efficient way to calculate
$\equiv_\rem$- and $\equiv_\rem^{(2)}$-classes in a group, and present
a theorem summarising the results of these calculations.

The equivalence relation $\equiv_\rem$ can be thought of another way. Construct the permutation action
of $G$ which is the disjoint union of the actions on the cosets of maximal
subgroups, one for each conjugacy class. Let $\Omega$ be the domain of this
action. For brevity, we call this the \emph{m-universal action} of $G$.

\begin{lemma}\label{lem:m-universal}
Let $G$ be a finite group, and let $x, y \in G$ and $S \subseteq G$.
\begin{enumerate}
\item[(1)] $x\equiv_\rem y$ if and only if $x$ and $y$ have
the same fixed point sets in the m-universal action of $G$. 
\item[(2)] $G = \langle S \rangle$ if and only if the intersection of
  the fixed point sets of elements of $S$ 
in the m-universal action of $G$ is empty.
\end{enumerate}
\end{lemma}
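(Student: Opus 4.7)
The plan is to exploit the well-known correspondence between fixed points in a coset action and conjugates of the point stabiliser. Concretely, if $G$ acts on the cosets of a maximal subgroup $M$, then an element $g$ fixes the coset $hM$ if and only if $g \in hMh^{-1}$. Taking the disjoint union over one representative $M$ from each conjugacy class of maximal subgroups, we see that the fixed points of $g$ in the m-universal action are in bijection with the set of \emph{all} maximal subgroups of $G$ (not just one per class) that contain $g$: each conjugate $hMh^{-1}$ arises as the stabiliser of a unique coset in the action on $G/M$.

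First I would prove this bijection as a preliminary observation, stating it as: for any $g \in G$, the fixed point set $\Fix(g) \subseteq \Omega$ is in natural bijection with $\{M' \leq G : M' \text{ maximal}, g \in M'\}$. With this in hand, part~(1) is immediate: $x$ and $y$ lie in the same maximal subgroups of $G$ if and only if $\Fix(x) = \Fix(y)$, which is the definition of $\equiv_\rem$.

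For part~(2), I would use the fact that in a finite group, a subset $S$ generates $G$ if and only if $S$ is not contained in any maximal subgroup of $G$. The intersection $\bigcap_{s\in S}\Fix(s)$ consists of those cosets $hM$ (ranging over the chosen representatives $M$) such that $s \in hMh^{-1}$ for every $s \in S$, i.e.\ $S \subseteq hMh^{-1}$. So the intersection is nonempty if and only if some conjugate of some chosen maximal subgroup contains $S$, which (since every maximal subgroup is a conjugate of one of our representatives) is equivalent to saying that $S$ lies in some maximal subgroup of $G$. Negating, $\bigcap_{s \in S}\Fix(s) = \emptyset$ if and only if $\langle S \rangle = G$.

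There is no serious obstacle here; the only point that requires care is to be explicit that taking one coset action per conjugacy class of maximal subgroups, rather than per individual maximal subgroup, is exactly what is needed to make the bijection between fixed points of $g$ and maximal subgroups containing $g$ come out correctly (each maximal subgroup arises exactly once as a point stabiliser in the combined action).
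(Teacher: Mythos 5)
Your proof is essentially the paper's own argument: identify the fixed points of $g$ in each coset action $G/M$ with the conjugates of $M$ containing $g$, so that $\Fix(g)$ encodes exactly which maximal subgroups contain $g$; both parts then follow at once. One small inaccuracy: your claimed \emph{bijection} between $\Fix(g)$ and the maximal subgroups containing $g$ fails when some maximal subgroup $M$ is normal, since then every coset $hM$ has stabiliser $M$ and a single maximal subgroup containing $g$ accounts for all $[G:M]$ fixed points in that orbit (the paper treats the normal case separately for exactly this reason). This does not damage your argument --- what parts (1) and (2) actually need is only that $\Fix(g)$ determines, and is determined by, the set of maximal subgroups containing $g$, and that a coset $hM$ lies in $\bigcap_{s\in S}\Fix(s)$ if and only if $S\subseteq hMh^{-1}$, both of which hold in the normal case as well --- but you should weaken ``bijection'' to ``mutual determination'' or add the normal case as a separate (trivial) check.
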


\begin{proof}
Notice that in the orbit corresponding to
a non-normal maximal subgroup $M$, the point stabilisers are the conjugates
of $M$; whereas, if $M$ is normal, then its elements fix every point in the
corresponding orbit, while the elements outside $M$ fix none. Hence
the fixed point set of an element $x$ describes precisely which
maximal subgroups of $G$ contain $x$, and (1) follows. 
For (2), notice that $G = \langle S\rangle$ if and only if $S$ is
contained in no 
maximal subgroup of $G$.
\end{proof}

\begin{defn} A permutation group action 
has \emph{property $\mathcal{G}$} if it satisfies:
each set $S$ of group elements generates the group if and only if the
fixed-point sets  of elements of $S$ have empty
intersection.
\end{defn}


\begin{lemma}
The m-universal action is the smallest degree permutation action of $G$ with
property $\mathcal{G}$. 
\end{lemma}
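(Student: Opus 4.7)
The plan is to reduce this to a characterization of property $\mathcal{G}$ in terms of point stabilizers, after which a routine orbit-counting argument pins down the minimum degree. The key claim I would prove first is the following: a permutation action of $G$ has property $\mathcal{G}$ if and only if every maximal subgroup of $G$ occurs as a point stabilizer.

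For the forward direction of this characterization, fix a maximal subgroup $M$ and choose a generating set $S$ of $M$. Since $\langle S\rangle = M\neq G$, property $\mathcal{G}$ produces a point $\omega$ fixed by every element of $S$, so $M=\langle S\rangle\leq G_\omega$. Applying property $\mathcal{G}$ to any generating set of $G$ shows that no stabilizer can equal $G$, so $G_\omega$ is proper and maximality of $M$ forces $G_\omega=M$. The converse direction is immediate: if $\langle S\rangle\neq G$ then $S\subseteq M=G_\omega$ for some maximal $M$, placing $\omega$ in the intersection of the fixed-point sets; and if $S\subseteq G_\omega$ then $\langle S\rangle\leq G_\omega<G$.

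Given this characterization, the minimum-degree computation is straightforward. Each transitive constituent of any permutation action has stabilizers forming a single $G$-conjugacy class of subgroups, so to realize every maximal subgroup as a stabilizer the action must contain at least one orbit for each conjugacy class of maximal subgroups. An orbit whose stabilizers hit the conjugacy class of a maximal subgroup $M$ must have stabilizer equal (up to conjugation) to $M$ itself, since any proper overgroup of $M$ in $G$ would contradict maximality; such an orbit has degree $[G:M]$. Summing $[G:M]$ over one representative per conjugacy class of maximal subgroups recovers precisely the degree of the m-universal action, and that action does satisfy property $\mathcal{G}$ by Lemma~\ref{lem:m-universal}.

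The argument is essentially a bookkeeping exercise once the characterization is in hand. The one subtlety I anticipate is verifying that point stabilizers in a $\mathcal{G}$-action must be proper subgroups of $G$; beyond that, I do not expect any substantive obstacle.
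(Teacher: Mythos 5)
Your proposal is correct and takes essentially the same route as the paper: for each maximal subgroup $M$ you pick a generating set of $M$, invoke property $\mathcal{G}$ to obtain a common fixed point $\omega$, and conclude $G_\omega=M$ by maximality (your explicit check that $G_\omega\ne G$ is a point the paper's proof elides), after which summing $[G:M]$ over conjugacy-class representatives gives the degree bound. One small caveat: the reverse implication of your stated key claim is false as written, since an action realising every maximal subgroup as a stabiliser but also possessing a global fixed point fails $\mathcal{G}$; however, only the forward implication feeds into the degree count, so this does not affect the argument.
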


\begin{proof}
First notice that by Lemma~\ref{lem:m-universal}(2),  the m-universal action has
property $\mathcal{G}$. 
Now suppose that we have an action of $G$ with property $\mathcal{G}$. We must
show that it contains the m-universal action. So let $M$ be a maximal subgroup
of $G$. Choose generators $g_1,\ldots,g_r$ of $M$. Since these elements do not
generate $G$, property $\mathcal{G}$ implies that they have a common fixed
point, say $\omega$. Thus $M\le G_\omega$, and maximality of $M$ implies
equality. So the coset space of $M$ is contained in the given action. Since
this holds for all maximal subgroups $M$, we are done.
\end{proof}

Our algorithm to test whether $\psi(G) = 2$ proceeds as
follows, on input a finite group $G$. 

\begin{enumerate}
\item[(1)] Construct the maximal subgroups of $G$, and hence the
  m-universal action of $G$.
\item[(2)] For each $g \in G$, compute the fixed point set $\Fix(g)$ of $g$ in the
  m-universal action, and hence construct a set of equivalence class
  representatives for the $\equiv_\rem$-classes of $G$. 
\item[(3)] For each pair $x, y$ of distinct $\equiv_\rem$-class
  representatives, check that there exists a $z \in G$ such that
  either $\Fix(x) \cap \Fix(z) = \emptyset$ and $\Fix(y) \cap \Fix(z)$
  is non-empty, or \emph{vice versa}.
\end{enumerate}

If the test in Step 3 succeeds for all distinct $x$ and $y$, then the
set of distinct $\equiv_\rem$-class representatives is also a set of
distinct $\equiv_\rem^{(2)}$-class representatives. That is, $\psi(G)
= 2$.

We have implemented the
algorithm in \Magma \, \cite{Magma}, and used it to prove the following:

\begin{theorem}\label{thm:small_as} 
Let $G$ be an almost simple group with socle of order less than $10000$
such that all proper quotients of $G$ are cyclic. Then $\psi(G) = 2$.
\end{theorem}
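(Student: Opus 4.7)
The plan is to apply the algorithm described in Subsection~\ref{subsec:calculate} to each almost simple group $G$ satisfying the hypotheses, and to verify computationally in every case that the set of $\equiv_\rem$-class representatives is also a set of $\equiv_\rem^{(2)}$-class representatives. Since by Lemma~\ref{lem:m-basics}(2) we have $\psi(G) \geq d(G) \geq 2$ (the socle is simple nonabelian, hence noncyclic), and the claim $\psi(G) = 2$ is equivalent to $\equiv_\rem^{(2)}$ coinciding with $\equiv_\rem$, this will suffice.

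First I would enumerate the relevant groups. Consulting the list of nonabelian finite simple groups of order less than $10000$ gives the socles $\alt_5$, $\psl_2(7)$, $\alt_6$, $\psl_2(8)$, $\psl_2(11)$, $\psl_2(13)$, $\psl_2(17)$, $\alt_7$, $\psl_2(19)$, $\psl_2(16)$, $\psl_3(3)$, $\psu_3(3)$, $\psl_2(23)$, $\psl_2(25)$, $M_{11}$, and $\psl_2(27)$. For each such simple $S$ I list the almost simple groups $G$ with $\Soc(G) = S$ and $G/S$ cyclic; these correspond to cyclic subgroups of $\out(S)$ and in each case yield only a handful of groups, all of which satisfy $d(G) = 2$.

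Next I run the algorithm on each $G$. Concretely, I construct representatives of the conjugacy classes of maximal subgroups of $G$ (available directly in \Magma), take their coset actions, and form the disjoint union to obtain the m-universal action on a domain $\Omega$. For every $g \in G$ I record $\Fix(g) \subseteq \Omega$ as a bitmap; two elements lie in the same $\equiv_\rem$-class exactly when their bitmaps agree, by Lemma~\ref{lem:m-universal}(1). This yields the partition of $G$ into $\equiv_\rem$-classes together with a representative from each. For every pair $\{x,y\}$ of distinct representatives I then search for a witness $z \in G$ such that precisely one of $\Fix(x) \cap \Fix(z)$ and $\Fix(y) \cap \Fix(z)$ is empty, using Lemma~\ref{lem:m-universal}(2) to certify $2$-element generation via an empty intersection. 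If such a $z$ is found for every such pair, then $\equiv_\rem^{(2)} = \equiv_\rem$ and hence $\psi(G) = 2$.

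The hardest part is purely computational efficiency rather than conceptual difficulty: for the larger socles (such as $M_{11}$ and $\psl_2(27)$) the domain $\Omega$ and the number of candidate pairs $\{x,y\}$ grow rapidly, and a naive implementation that recomputes $\langle x,z\rangle$ for each candidate would be prohibitive. The bitmap-based fixed-point representation sidesteps this by reducing each generation test to a single bitwise AND, so the whole enumeration fits comfortably within the budget of the \Magma\ implementation already referenced in the paper. Running the algorithm on every group in the enumerated list and observing that the witness search succeeds in every case constitutes the proof.
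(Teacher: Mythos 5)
Your proposal is correct and is essentially identical to the paper's proof: the authors establish this theorem precisely by running the \Magma\ implementation of the algorithm from Subsection~\ref{subsec:calculate} (m-universal action, fixed-point sets, pairwise witness search) on the same list of almost simple groups. The only cosmetic difference is that you spell out the enumeration of socles and the bitmap implementation detail, which the paper leaves implicit.
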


The socle of such a group $G$ is one of: $\alt_n$ for $5 \le n \le 7$,
$\psl_2(q)$ for $q \leq 27$ a prime power,
$\psl_3(3)$, $\psu_3(3)$ or the sporadic group $\mathrm{M}_{11}$.  

The only almost simple groups with socle of order less than 10000 with
a proper non-cyclic quotient are $\alt_{6}.2^2$ and $\psl_2(25).2^2$. 
Using similar
ideas to the above we were able to show that $\psi(\alt_{6}.2^2) = 3$. 

Notice that in all of these instances, the lower bounds from
Lemma~\ref{lem:lower} are attained. 

\section{$c$-equivalence}\label{sec:c}

In this section we define another equivalence relation, which can be
used to give an easy-to-calculate upper bound on the number of
$\equiv_{\rem}$-classes, and investigate when this new relation
coincides with $\equiv_{\rem}$.

\begin{defn}
Let $G$ be a finite group, and let $x, y \in G$. We define
$x\equiv_\rc y$ if $\langle x\rangle=\langle y\rangle$. We use $\rc$ for
\emph{cyclic}. 
\end{defn}

The following is clear.

\begin{lemma}
Let $G$ be a finite group.
For all $x, y \in G$, if $x \equiv_{\rc} y$ then $x \equiv_{\rem}
y$. Hence if $n$ is the order of an element of $G$, then at least one
$\equiv_{\rem}$-class of $G$ contains at least $\phi(n)$ elements.
\end{lemma}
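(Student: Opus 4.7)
The plan is to prove both assertions directly from the definitions, since the statement is essentially an observation rather than a deep result.

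For the first assertion, I would simply unwind what $\equiv_\rem$ and $\equiv_\rc$ mean. Suppose $x \equiv_\rc y$, i.e.\ $\langle x\rangle = \langle y\rangle$. Let $M$ be any maximal subgroup of $G$. Since $M$ is a subgroup, $x \in M$ if and only if $\langle x\rangle \leq M$, and similarly $y \in M$ if and only if $\langle y\rangle \leq M$. As $\langle x\rangle = \langle y\rangle$, these two conditions are equivalent. Therefore $x$ and $y$ lie in exactly the same maximal subgroups of $G$, which is the definition of $x \equiv_\rem y$.

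For the second assertion, let $x \in G$ have order $n$. The cyclic group $\langle x\rangle$ of order $n$ has exactly $\phi(n)$ generators, namely the elements $x^k$ with $1 \leq k \leq n$ and $\gcd(k,n) = 1$. These $\phi(n)$ elements constitute precisely the $\equiv_\rc$-class of $x$: an element $y \in G$ satisfies $\langle y\rangle = \langle x\rangle$ if and only if $y$ is one of these generators. By the first assertion, every $\equiv_\rc$-class is contained in a single $\equiv_\rem$-class, so the $\equiv_\rem$-class containing $x$ has cardinality at least $\phi(n)$.

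There is no genuine obstacle here: the content of the lemma is just that passing from cyclic equivalence to maximal-subgroup equivalence only coarsens the partition, and the size bound is a counting observation about generators of a cyclic group. The only minor point worth being careful about is not to overstate the conclusion — one obtains an $\equiv_\rem$-class of size at least $\phi(n)$, but in general the containment $\equiv_\rc\; \subseteq\; \equiv_\rem$ may be strict, so the $\equiv_\rem$-class could be considerably larger than $\phi(n)$; the lemma only asserts a lower bound.
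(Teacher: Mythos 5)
Your proof is correct and fills in exactly the routine details that the paper omits (the paper simply states the lemma as clear, offering no proof). Your argument — membership of $x$ in a maximal subgroup depends only on $\langle x\rangle$, and a cyclic group of order $n$ has $\phi(n)$ generators — is the intended one, and your closing remark that the bound may be far from tight is a sensible caveat.
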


The converse implication of the first statement holds for
many groups (including $\sym_n$ and $\alt_n$ for $n \in \{5, 6\}$,
and $\psl_2(q)$ for $q \in \{7,11,13\}$), but not for all groups. 

\begin{prop}\label{metab} 
Let $G$ be a finite group. If
the relations $\equiv_\rem$ and $\equiv_\rc$ coincide, then
\begin{enumerate}
	\item[(1)] $\frat(G)=1;$
	\item[(2)] if $G$ is soluble then every minimal normal subgroup of $G$ is cyclic;
	\item[(3)] if $G$ is soluble then $G$ is  metabelian.
\end{enumerate}
\end{prop}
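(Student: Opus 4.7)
The plan is to handle the three parts in the stated order, each building on the previous. For (1), the remark immediately following the definition of $\equiv_\rem$ gives that the $\equiv_\rem$-class of the identity is $\frat(G)$, while the $\equiv_\rc$-class of $1$ is obviously $\{1\}$; equating them yields $\frat(G) = 1$. This step is essentially a one-line check.

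For (2), I would fix a minimal normal subgroup $N$ of $G$, which is elementary abelian since $G$ is soluble, and show that any two non-identity elements $x, y \in N$ are $\equiv_\rem$-equivalent. The key structural fact is that for every maximal subgroup $M$ the intersection $M \cap N$ is either trivial or equal to $N$: it is normalized by $M$ (as the intersection of $M$ with the normal subgroup $N$) and by $N$ (since $N$ is abelian), so if $N \not\leq M$ then $MN = G$ by maximality, whence $M \cap N$ is normal in $G$ and strictly smaller than $N$, hence trivial by minimality. Therefore any maximal subgroup either contains $N$ or misses $N \setminus \{1\}$ entirely, so any two non-identity elements of $N$ are $\equiv_\rem$-equivalent. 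The hypothesis then upgrades this to $\langle x \rangle = \langle y \rangle$, forcing $N$ to be cyclic.

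For (3), set $F = \fit(G)$. By (1) we have $\frat(G) = 1$, and the standard Frattini-free structure theorem for soluble groups gives $F = \Soc(G)$, a direct product of minimal normal subgroups; by (2) each summand is cyclic of prime order, so $F$ is abelian. Together with the classical inclusion $C_G(F) \leq F$ for soluble $G$, this yields $C_G(F) = F$, and it remains to prove that $G/F = G/C_G(F)$ is abelian. For each prime $p$ dividing $|F|$, the Sylow $p$-subgroup $P_p \leq F$ is characteristic in $F$ and hence $G$-invariant; viewed as an $\mathbb{F}_p$-vector space, it is the direct sum of those minimal-normal summands of $F$ that are $p$-groups, and $G$ stabilizes each of these one-dimensional summands by normality. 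Choosing a basis from these summands shows that the image of $G$ in $\gl(P_p)$ lies in the (abelian) diagonal subgroup. Combining across primes, $G/C_G(F)$ is abelian, so $G'$ lies in the abelian group $F$, and $G$ is metabelian.

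I do not expect a serious obstacle. Parts (1) and (2) are short, and the substance of (3) is the simultaneous-diagonalization observation: because each primary component of $F$ decomposes into $G$-invariant lines, the $G$-action on it is by diagonal matrices in a common basis, hence abelian. Everything else in (3) is routine assembly using Gasch\"utz's Frattini-free description of $F$ and the standard fact $C_G(F) \leq F$ for soluble groups, which I would cite rather than reprove.
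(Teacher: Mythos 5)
Your proof is correct and follows essentially the same route as the paper: (1) via the Frattini-class observation, (2) via the fact that a maximal subgroup either contains or complements a minimal normal subgroup, and (3) via $F=\fit(G)=\Soc(G)=C_G(F)$ for Frattini-free soluble $G$. Your ``simultaneous diagonalization'' in (3) is just a concrete restatement of the paper's embedding of $G/C_G(F)$ into $\prod_N\Aut(N)$ with each $\Aut(N)$ abelian, so there is no substantive difference.
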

\begin{proof}
(1) All of the elements of $\frat(G)$ are $\equiv_\rem$-equivalent. 

\noindent (2) Let $G$ be soluble and 
let $N$ be a minimal normal subgroup of $G.$ Every maximal
subgroup 
of $G$ either contains or complements $N$. This implies that all the
elements of 
$N\setminus\{1\}$ are $\equiv_\rem$ equivalent, and consequently 
$N$ is cyclic (of prime order). 

\noindent (3) Let $G$ be soluble and let $F=\fit(G).$ Since $\frat(G)=1,$ 
it follows from \cite[5.2.15]{Rob} that  $\fit(G)=\Soc(G)$, 
and hence $F=C_G(F)=\cap_{N\in \mathcal
  N}C_G(N),$
 where $\mathcal N$ is the set of the minimal normal subgroups of $G.$ 
But then $$\frac{G}{F}=\frac{G}{\bigcap_N C_G(N)}\leq \prod_N \Aut(N)$$ is abelian.
\end{proof}

	The conditions listed in the previous proposition are not
        sufficient to ensure that the relations $\equiv_\rem$ and
        $\equiv_\rc$ coincide on soluble groups $G$. 
In order to obtain a more precise
        result, let us fix some notation. Assume that
$G$ is soluble and satisfies the conclusions of Proposition~\ref{metab}. We set
	$F=\fit(G)$ and $Z=Z(G).$ Then
	$$F=V_1^{r_1}\times \cdots \times V_t^{r_t}\times Z,$$
where $V_1^{r_1},\dots,V_t^{r_t}$ are the non-central homogeneous components
of $F$ as a $G$-module. In particular, $V_i$ is cyclic of prime order
for every $i.$ 
Moreover $G=F :  H,$ where $H$ is a subdirect product of $\prod_i
H_i$, 
with $H_i \leq  \Aut(V_i).$ 
Finally, for $h=(h_1,\dots,h_t)\in H,$ define 
$\Omega(h)=\{i\in\{1,\dots,t\} \mid h_i=1\}.$ 

\begin{theorem}\label{thm:sol_c}
Let $G = F:H$ as above be a soluble group satisfying the conclusions of
Proposition~\ref{metab}. 
The relations $\equiv_\rem$ and $\equiv_\rc$ coincide on $G$ if and
only if the following property is satisfied, for all  $(z_1,h_1),
(z_2,h_2) \in Z\times H$ 
\begin{center} $(*)$ if
	$\langle (z_1,h_1) \rangle \frat H= \langle (z_2,h_2) \rangle \frat H$ and $\Omega(h_1)=\Omega(h_2),$ then
$\langle (z_1,h_1) \rangle=\langle (z_2,h_2) \rangle.$\end{center}
\end{theorem}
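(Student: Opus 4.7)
The plan is to prove both implications by first cataloguing the maximal subgroups of $G$, then computing exactly which of them contain an element of the form $zh \in Z\cdot H$, and finally showing that condition $(*)$ says precisely that the $\equiv_\rem$-invariants of such elements determine $\langle zh\rangle$. The soft input is Proposition~\ref{metab}: under its conclusions we have $F=\fit(G)=\Soc(G)$ and $\frat(G)=1$, so every maximal subgroup $M$ either contains $F$ (and hence has the form $FM_0$ with $M_0$ maximal in $H$) or complements a minimal normal subgroup $N\leq F$ of $G$. The minimal normal subgroups are the $H$-irreducible submodules of the $V_i^{r_i}$ (each of $\End_H(V_i)$-dimension $1$, and each of prime order by hypothesis) together with the prime-order subgroups of $Z$.

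Step 1: For $x=zh$ with $z\in Z$, $h\in H$, I would determine membership in each type of maximal. The maximals of the form $FM_0$ contain $zh$ iff $h\in M_0$, so together they record precisely the coset $h\,\frat(H)$ up to the usual ``same maximals of $H$'' equivalence. For a complement $C$ of a minimal normal $N\leq V_i^{r_i}$, the element $zh$ lies in $C$ iff $h_i$ acts on $V_i^{r_i}$ as it does on $V_i^{r_i}/N$, which (since $V_i$ has prime order and $h_i$ acts by scalars) is automatic when $h_i\neq 1$; in that case $zh$ lies in every such $C$, while if $h_i=1$ then $zh$ lies in $C$ iff $C$ contains the generating $V_i$-subspace $N$. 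Thus the maximals of this type see only $\Omega(h)$. Finally, for a complement of a prime-order subgroup of $Z$, membership of $zh$ is controlled by $z$ together with whether $h=1$.

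Step 2 (forward direction): Suppose $\equiv_\rem$ and $\equiv_\rc$ coincide, and let $(z_1,h_1),(z_2,h_2)\in Z\times H$ satisfy the hypotheses of $(*)$. By Step 1 the condition $\langle (z_1,h_1)\rangle\frat H=\langle (z_2,h_2)\rangle\frat H$ handles the maximals of the first type (since passing to generators of the same cyclic subgroup mod $\frat H$ preserves the set of containing maximals of $H$), and $\Omega(h_1)=\Omega(h_2)$ together with the equality of the $Z$-components forced by $\langle (z_1,h_1)\rangle\frat H=\langle (z_2,h_2)\rangle\frat H$ handles the maximals complementing submodules of $F$. Thus $(z_1,h_1)\equiv_\rem (z_2,h_2)$, and the hypothesis then yields $\langle (z_1,h_1)\rangle=\langle (z_2,h_2)\rangle$, i.e.\ $(*)$.

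Step 3 (reverse direction): Assume $(*)$ holds and let $x,y\in G$ with $x\equiv_\rem y$. Write $x=fh$, $y=f'h'$ with $f,f'\in F$ and $h,h'\in H$. Reading membership in maximals of the first type forces $h$ and $h'$ to have the same image in $H/\frat H$, so in particular $\Omega(h)=\Omega(h')$. For each $i\notin\Omega(h)$ we have $h_i\neq 1$, and because $V_i$ is cyclic of prime order acted on by the nontrivial scalar $h_i-1$, conjugation of $x$ by a suitable element of $V_i^{r_i}$ absorbs the $V_i^{r_i}$-component of $f$; doing this simultaneously over all such $i$ replaces $x$ by a conjugate lying in $Z\cdot H$, and similarly for $y$. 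Conjugation preserves $\equiv_\rem$ and $\langle \cdot\rangle$, so we reduce to the case $x=(z_1,h_1)$, $y=(z_2,h_2)\in Z\times H$, and translate the $\equiv_\rem$ equivalence back through Step 1 into the hypothesis of $(*)$; the conclusion $\langle x\rangle=\langle y\rangle$ is exactly what is needed.

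The main obstacle will be Step 3, and specifically two points: first, verifying cleanly that every $\equiv_\rem$-class meets $Z\cdot H$ via the cohomological/conjugation reduction above (which relies on the primality of $|V_i|$ and on $h_i$ acting as a nontrivial scalar when $i\notin\Omega(h)$); and second, checking that among elements of $Z\cdot H$, the only information retained by $\equiv_\rem$ is precisely $h\,\frat(H)$ together with $\Omega(h)$ and the action of $z$ on complements of cyclic subgroups of $Z$. This last identification is what makes $(*)$ the exactly correct condition.
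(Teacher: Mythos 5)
Your overall architecture is the same as the paper's: split the maximal subgroups of $G$ into those containing $W=V_1^{r_1}\times\cdots\times V_t^{r_t}$ (whose containment data, for an element of the abelian group $Z\times H$, amounts to $\langle x\rangle\frat H$) and those complementing a noncentral chief factor (whose containment data is $\Omega(h)$), then read off the forward implication and reduce the converse to elements of $Z\cdot H$. But the key computation in Step 1 is inverted. A maximal subgroup not containing $W$ has the form $M=(V_1^{r_1}\times\cdots\times U_i\times\cdots\times V_t^{r_t}\times Z):H^{w}$ with $U_i$ a maximal $H$-submodule of $V_i^{r_i}$ and $w\in V_i^{r_i}$, and $zh\in M$ if and only if $[w,h^{-1}]\in U_i$; this commutator is trivial when $h_i=1$ and is a nonzero scalar multiple of $w$ when $h_i\neq 1$. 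So when $h_i=1$ the element $zh$ lies in \emph{every} such $M$, whereas when $h_i\neq 1$ it lies only in those with $w\in U_i$ (already in $\sym_3$ a transposition lies in one complement of $C_3$, not all three). Your criterion ``$h_i$ acts on $V_i^{r_i}$ as on $V_i^{r_i}/N$'' is vacuous, your two cases are exactly interchanged, and a complement of $N$ never ``contains $N$''. The conclusion you need --- that this family of maximal subgroups records precisely $\Omega(h)$ --- happens to survive the swap, but the computation as written is false.

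There are also two genuine gaps in Step 3. First, deducing $\Omega(h)=\Omega(h')$ from the maximals containing $F$ is a non sequitur: those maximals only yield $\langle h\rangle\frat H=\langle h'\rangle\frat H$ (not equality of images in $H/\frat H$), and since $\frat(H)$ need not project trivially onto each $H_i$ this does not determine $\Omega$ (take $H\cong C_4$ subdirect in $C_4\times C_2$: then $(a^2,1)\in\frat(H)$ and the identity generate the same subgroup mod $\frat H$ but have different $\Omega$). The set $\Omega(h)$ has to be read off from the complement-type maximals, as the paper does. Second, and more seriously, conjugation by $V_i^{r_i}$ absorbs the $i$-th component of $f$ only when $h_i\neq 1$; for $i\in\Omega(h)$ the element $h$ centralises $V_i^{r_i}$ and that component cannot be removed (a $3$-cycle in $\sym_3$ is not conjugate into any complement of $C_3$). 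So your conjugate need not lie in $Z\cdot H$, and the reduction you rely on collapses exactly at the point you identify as the main obstacle. The paper closes this not by conjugation but by a direct separation argument: once one of the two elements is arranged to lie in $Z\cdot H$, any $\equiv_\rem$-equivalent element $w_2z_2h_2$ must have $w_2=1$, since otherwise some maximal subgroup $(\cdots\times U_i\times\cdots\times Z):H$ with $(w_2)_i\notin U_i$ contains the first element but not the second. Some such maximal-subgroup argument is indispensable here; conjugation alone cannot supply it.
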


\begin{proof}
Let $x_1=(z_1,h_1)$, $x_2=(z_2,h_2) \in Z\times H,$ with
$h_1=(\alpha_1,\dots,\alpha_t)$ and $h_2=(\beta_1,\dots,\beta_t).$
Assume that  $\langle x_1 \rangle \frat H= \langle x_2 \rangle \frat
H$ and $\Omega(h_1)=\Omega(h_2)$. We claim that a maximal subgroup 
$M$ of $G$ contains $x_1$ if and only if it contains $x_2,$ and hence
that $x_1 \equiv_{\rem} x_2$.   

Let $W=V_1^{r_1}\times \cdots \times V_t^{r_t}$ and let $L=\frat(Z\times H)=\frat(H).$
If $W \leq M,$ then $W :  L \leq M,$ so 
$\langle x_i \rangle \subseteq M$ if and only if 
$\langle x_i\rangle L \subseteq M$. 
Since $\langle x_1\rangle L=
\langle x_2\rangle L$, we deduce that $x_1\in M$ if and only if
$x_2\in M.$
If $W \not\leq M$,  then there exists $i \in \{1,\dots,t\},$ 
a maximal $H$-invariant subgroup $U_i$ of $V_i^{r_i}$ and
$w_i \in V_i^{r_i}$ such that
	$$M=(V_1^{r_1}\times \cdots \times V_{i-1}^{r_{i-1}}\times U_i \times V_{i+1}^{r_{i+1}}\times \cdots \times V_{t}^{r_{t}}\times Z) :  H^{w_i}.$$
	Notice in particular that if $(\gamma_1,\dots,\gamma_r) \in H,$ then 
	$(\gamma_1,\dots,\gamma_r) \in M$ if and only if $\gamma_i \in
        U_iH_i^{w_i}$. In this case we can write $\gamma_i = u_i[w_i,
        h_i^{-1}]h_i = h_i$, so that
	$[w_i, \gamma_i^{-1}]\in
	U_i.$  
Since $V_i^{r_i}/U_i\cong_{H_i}V_i,$ we have that if $[w_i, \gamma_i^{-1}]\in U_i
	$
	then either $\gamma_i=1$ or $w_i\in U_i.$ If $w_i\in U_i$ then $x_1, x_2 \in M.$ So assume $w_i\not\in U_i.$ Since $\Omega(h_1)=\Omega(h_2),$ we have that
	$\alpha_i=1$ if and only only if $\beta_i=1,$ hence $x_1\in M$ if and only if $x_2\in M.$ We have proved that if $\equiv_\rem$ and $\equiv_\rc$ coincide, then $(*)$ holds.

For the converse, let
$x_1=w_1z_1h_1,$ $x_2=w_2z_2h_2$ be two elements of $G$ with
	$h_1,h_2 \in H, $ $z_1,z_2 \in Z$ and $w_1,w_2\in W.$ Assume that $x_1\equiv_\rem x_2.$ Since $w_1h_1$ and $h_1$ are conjugate in $G$, it is not
	restrictive to assume that $x_1=z_1h_1.$
 We claim that this implies that $w_2=1.$ Indeed, assume that
$w_2=(v_1,\dots,v_t)\neq 1.$ Then there exists an $i$ such that
$v_i\neq 1$, 
and consequently there exists
 a maximal $H$-invariant subgroup $U_i$ of $V_i^{r_i}$ with $v_i
 \notin U_i.$ 
This leads to a contradiction, since the maximal subgroup 	$$M=(V_1^{r_1}\times \cdots \times V_{i-1}^{r_{i-1}}\times U_i \times V_{i+1}^{r_{i+1}}\times \cdots \times V_{t}^{r_{t}}\times Z) :  H$$
	contains $x_1$ but not $x_2.$ 

Having $w_1=w_2=1,$ the argument used in the first part of this proof
shows that the condition $\Omega(h_1)=\Omega(h_2)$ is equivalent to 
saying that a maximal subgroup of $G$ not containing $W$ contains $x_1$
	if and only if it contains $x_2.$ On the other hand the maximal subgroups of $G$ containing $W$ are in bijective correspondence with those of $G/\frat H,$ hence 
	the condition $\langle x_1\rangle \frat H=\langle x_2\rangle
        \frat H$ is equivalent to saying
 that a maximal subgroup of $G$  containing $W$ contains $x_1$
	if and only if it contains $x_2.$ We have therefore
 proved that $x_1\equiv_\rem x_2$ implies
	that $\Omega(h_1)=\Omega(h_2)$ and $\langle x_1\rangle \frat H=\langle x_2\rangle \frat H,$ and therefore if $(*)$ holds, then $x_1\equiv_\rc x_2.$
\end{proof}

Here are two examples of groups which satisfy the conclusions of
Proposition~\ref{metab}, but do not satisfy condition $(*)$. Hence
$\equiv_\rc$-equivalence is finer than $\equiv_\rem$-equivalence.
\begin{enumerate}
\item Let $G$ be the sharply $2$-transitive group of degree~$17$, the
semidirect product of $C_{17}$ with a Singer cycle $C_{16}$. The maximal
subgroups are $C_{17}:C_8$ and the conjugates of $C_{16}$. In particular,
we see that elements of orders $2$, $4$ and $8$ in a fixed complement $C_{16}$
are all $\equiv_\rem$-equivalent. However, $\equiv_\rc$-equivalent elements
have the same order. 
\item A second example is $(\langle x \rangle  :  \langle y \rangle)\times \langle z \rangle$ with $|x|=19, |y|=9, |z|=3$ (indeed $(y^3,z)\equiv_\rem (y^6,z)$).
\end{enumerate}

\begin{prop}\label{prop:min_norm}
Assume that a finite group $G$ contains a minimal normal subgroup $N=S_1\times \cdots \times S_t$, with $S_i\cong S$ a finite
nonabelian simple group. If either $t\geq 3$, or
$t=2$ and $S$ is not isomorphic to $\mathrm{P\Omega}^+_8(q)$ with
$q=2$ or $3$,
then the relations $\equiv_\rem$ and $\equiv_\rc$ do not coincide on $G.$
\end{prop}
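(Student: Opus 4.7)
The plan is to construct explicit elements $x, y \in N$ with $\langle x\rangle \neq \langle y\rangle$ but $x \equiv_\rem y$, which shows $\equiv_\rem$ is strictly coarser than $\equiv_\rc$ on $G$. For any nontrivial $s \in S$, the natural candidates are
\[ x = (s, 1, 1, \dots, 1), \qquad y = (1, s, 1, \dots, 1), \]
viewed as elements of $N = S_1 \times \cdots \times S_t$. Their cyclic subgroups lie in distinct coordinate axes of $N$, so $\langle x \rangle \ne \langle y \rangle$ as soon as $s \ne 1$; the substance of the proof is to verify $x \equiv_\rem y$.

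Every maximal subgroup of $G$ containing $N$ trivially contains both $x$ and $y$, so it suffices to analyse maximal subgroups $M$ of $G$ with $M \not\supseteq N$. For such an $M$ we have $MN = G$, and $M \cap N$ is a maximal proper $G$-invariant subgroup of $N$. I would invoke the Aschbacher--Scott classification: up to $G$-conjugacy $M \cap N$ is either a complement to $N$ (so $M \cap N = 1$ and neither $x$ nor $y$ lies in $M$), a product-type subgroup $H_1 \times \cdots \times H_t$ in which the $H_i$ are obtained from a fixed proper $H \le S$ by applying outer automorphisms of $S$ dictated by the $G$-action, or a block-diagonal subgroup arising from a $G$-invariant partition of $\{1, \dots, t\}$. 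In the diagonal case, membership of either $x$ or $y$ forces $s = 1$, so neither lies in $M$. In the product case, $x \in M$ iff $s \in H_1$ and $y \in M$ iff $s \in H_2 = \phi_2(H_1)$; these conditions agree provided $s$ is chosen so that $s$ and its $\mathrm{Aut}(S)$-images lie in the same maximal subgroups of $S$, and a suitable $s$ exists whenever $\mathrm{Out}(S)$ is not too wild, for example by picking $s$ of prime order coprime to $|\mathrm{Out}(S)|$.

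The hard part will be the product-type case when $S$ is small and $\mathrm{Out}(S)$ is rich enough that no such invariant $s$ exists. For $t \ge 3$, there is extra room to refine the construction (for example by distributing $s$ over several coordinates) so that enough symmetry is retained to neutralise the outer action, using the structure of the $G$-action on $\{1, \dots, t\}$. For $t = 2$ the same trick is unavailable, and the exclusion of $S = \mathrm{P\Omega}^+_8(q)$ with $q \in \{2, 3\}$ corresponds precisely to the cases where triality (an outer automorphism of order three) combined with the factor swap obstructs the choice of $s$; for the remaining $t = 2$ cases, standard information on the action of $\mathrm{Out}(S)$ on the conjugacy classes and maximal subgroups of $S$ suffices to produce an appropriate invariant~$s$. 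Once $x \equiv_\rem y$ is established while $\langle x \rangle \ne \langle y \rangle$, the conclusion follows.
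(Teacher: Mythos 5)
Your construction cannot work, and the obstruction is precisely the product-type maximal subgroups that you defer to ``the hard part''. These subgroups do not come one per conjugacy class of maximal subgroups of $S$: for a fixed proper $K\le S$ there is a whole family of core-free maximal $M$ with $M\cap N\le K\times K^{s_2\alpha_2}\times\cdots\times K^{s_t\alpha_t}$, where the inner twists $s_2,\dots,s_t$ range over (essentially all of) $S$. Your element $x=(s,1,\dots,1)$ lies in such an $M$ exactly when $s\in K$, independently of the twists, whereas $y=(1,s,1,\dots,1)$ lies in it exactly when $s\in K^{s_2\alpha_2}$. So for $x\equiv_\rem y$ you would need $s\in K\Leftrightarrow s\in K^{u}$ for \emph{every} conjugate $K^u$ of every maximal $K<S$; since $\bigcap_u K^u=1$, this forces $s=1$. (Concretely, in $\alt_5\wr C_2$ take $s$ a $3$-cycle and $K=\alt_4$: then $(s,1)$ lies in the product-type maximal subgroup over $K\times K$ but not $(1,s)$ over $K\times K^u$ with $s\notin K^u$.) Choosing $s$ of order coprime to $|\out(S)|$ addresses only the outer twisting and is irrelevant to this inner twisting, so no choice of $s$ rescues the pair $(s,1,\dots,1)$, $(1,s,\dots,1)$.

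The paper's proof goes a different way: rather than trying to make two axis elements equivalent, it produces elements of $N$ that lie in \emph{no} core-free maximal subgroup at all, hence lie in exactly the maximal subgroups containing $N$ and form a single large $\equiv_\rem$-class that visibly splits into many $\equiv_\rc$-classes. The key input is the theorem of Kantor--Lubotzky--Shalev and Guralnick--Malle giving $a,b\in S$ with $\langle a^\gamma,b^\delta\rangle=S$ for all $\gamma,\delta\in S$, with $a$ and $b$ not conjugate in $\Aut(S)$ unless $S\cong\mathrm{P\Omega}^+_8(2)$ or $\mathrm{P\Omega}^+_8(3)$. The elements $g_{x,y}=(a^x,b^{y\alpha_2},a,\dots,a,1)$ then avoid all product-type subgroups because $a^x,b^{y\alpha_2}$ in twisted copies of $K$ would force $K=S$, and avoid all diagonal-type subgroups because of the trivial last coordinate when $t\ge 3$, or because $a$ and $b$ are not $\Aut(S)$-conjugate when $t=2$. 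This last point is also where your explanation of the excluded cases is off: the exception for $\mathrm{P\Omega}^+_8(q)$, $q\in\{2,3\}$, is not about triality interacting with the factor swap, but about the failure of the non-conjugacy of $a$ and $b$ needed to exclude the full diagonal when $t=2$.
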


\begin{proof}
It is standard (see, for example, \cite[Remark 1.1.040]{BBE}) that 
if a maximal subgroup $M$ of $G$ does not contain
$N$,
 then one  of the following occurs:
\begin{enumerate}
\item[(1)] $M\cap N=1;$
\item[(2)] $M$ is of \emph{product type}: in this case there exist  
$\alpha_2,\dots,\alpha_t \in \Aut(S),$ independent of 
the choice of $M$, $s_2,\dots,s_t \in S$ and a proper subgroup $K$ of $S$ such that
$M\cap N \leq K\times K^{s_2\alpha_2}\times\cdots\times K^{s_t\alpha_t};$
\item[(3)] $M$ is of \emph{diagonal type}: in this case there exists a partition $\Phi:=\{B_1,\dots,B_u\}$ of $\{1,\dots,t\}$ into blocks of the same size such that $M\cap N \leq \prod_{B\in \Phi}D_B$ where $D_B$ is a full diagonal subgroup of $\prod_{j\in B}S_j.$
	\end{enumerate}
By \cite[Theorem 5.1]{ig} or \cite[Theorem 7.1]{gm}, there 
exist $a, b \in S$  with the property that $\langle
a^\gamma,b^\delta\rangle=S$ 
for each choice of $\gamma, \delta \in  S.$ Moreover if 
$S\neq \mathrm{P\Omega}^+_8(q),$ $q=2$ or $3,$ then
$a$ and $b$ are not conjugate in $\Aut(S).$

Let $x,y \in S$ and consider 
$$g_{x,y}=\begin{array}{ll}(a^{x},b^{y\alpha_2},a,\dots,a,1)&  {\mbox{if } }t>2\\
(a^{x},b^{y\alpha_2})&  {\mbox{otherwise.}}\end{array}$$

There is no maximal subgroup of product type containing 
$g_{x,y}.$  Otherwise we would have $a^{x}\in K,$  $b^{y\alpha_2}\in
K^{s_2\alpha_2},$ hence $S=\langle a^{x}, b^{y s_2^{-1}}\rangle\leq
K,$ contradicting the fact that $K$ is a proper subgroup of $S$. 
Moreover, since either $t\geq 3$ or $a$ and $b$ are
not conjugate
 in $\Aut(S),$ 
 no maximal subgroup of diagonal type contains  $g_{x,y}.$ Therefore 
 $g_{x,y}\in M$ if and only if $N\leq M$, for all maximal subgroups $M$. 
Hence, all the elements of the subset $\{ g_{x,y} \, \mid \,
x,y \in S\}$ are $\equiv_\rem$ equivalent, and therefore the relations
$\equiv_\rem$ and $\equiv_\rc$ do not coincide on $G.$
\end{proof}

\begin{cor}
Let $G$ be a finite group. 
If the relations $\equiv_\rem$ and $\equiv_\rc$  coincide on $G,$ then $G/\Soc(G)$ is soluble.
\end{cor}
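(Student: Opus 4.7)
The plan is to identify $\Soc(G)$ with the generalised Fitting subgroup $F^*(G)$, and then to bound the image of $G/F^*(G)$ in $\out(F^*(G))$. Let $F=\fit(G)$ and let $E$ be the layer of $G$ (the product of its components), so that $F^*(G)=F\cdot E$ and $C_G(F^*(G))\leq F^*(G)$.

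By Proposition~\ref{metab}(1), $\frat(G)=1$. Gasch\"utz's theorem gives $\frat(N)\leq\frat(G)$ for every normal (and, by induction, subnormal) subgroup $N$; hence $\frat(F)=1$ and $\frat(L)=1$ for every component $L$ of $G$. From the first, $F$ is abelian and (by the standard complete-reducibility theorem for $\fit(G)/\frat(G)$) a direct sum of minimal normal subgroups, so $F$ equals the abelian socle of $G$. For the components, in a quasi-simple group with nontrivial centre every maximal subgroup contains the centre (if $M$ is maximal with $MZ(L)=L$, then $L/(M\cap Z(L))$ splits as $\bar M\times\overline{Z(L)}$, contradicting perfectness of $L$), so $Z(L)\leq\frat(L)=1$ forces $L$ to be simple. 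The product of the $G$-orbit of $L$ is then a nonabelian minimal normal subgroup of $G$; conversely, every nonabelian minimal normal subgroup is a product of components. Hence $E$ equals the nonabelian socle, $F\cap E=1$, and $F^*(G)=F\times E=\Soc(G)$. In particular $C_G(\Soc(G))=Z(F^*(G))=F$.

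Consequently $G/\Soc(G)=G/F^*(G)$ embeds in $\out(F^*(G))$. Since $F=Z(F^*(G))$ and $E=F^*(G)'$ are both characteristic, $\Aut(F^*(G))=\Aut(F)\times\Aut(E)$ and $\out(F^*(G))=\Aut(F)\times\out(E)$. For the first factor, the argument of Proposition~\ref{metab}(2) applies verbatim to any abelian minimal normal subgroup $N$ of $G$ (the proof only needs $M\cap N$ to be normal in $G$ for any maximal $M$ not containing $N$, which holds because $N$ is abelian), so $N$ is cyclic of prime order. Hence $F$ decomposes as a direct sum of one-dimensional $G$-invariant subgroups, $G$ acts diagonally, and its image in $\Aut(F)$ is abelian. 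For the second factor, each nonabelian minimal normal subgroup $N_j$ of $G$ is $G$-invariant, so the image of $G$ in $\out(E)$ lies in $\prod_j\out(N_j)$; by Proposition~\ref{prop:min_norm} each $N_j$ is either a single simple group $T_j$ or $T_j\times T_j$ with $T_j\cong\mathrm{P\Omega}^+_8(q)$ for $q\in\{2,3\}$, giving $\out(N_j)=\out(T_j)$ or $\out(T_j)\wr \sym_2$ respectively, both soluble by Schreier's conjecture.

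Assembling the pieces, $G/\Soc(G)$ embeds in a direct product of an abelian group and a soluble group, and so is soluble. The main technical point is the identification $F^*(G)=\Soc(G)$; once that is in place, the bound on $\out(F^*(G))$ via Schreier's conjecture and Proposition~\ref{prop:min_norm} is routine bookkeeping on characteristic subgroups.
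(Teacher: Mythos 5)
Your proof is correct and takes essentially the same route as the paper's: both use $\frat(G)=1$ to identify $\Soc(G)$ with $F^*(G)$, note that abelian minimal normal subgroups are cyclic and hence contribute only abelian automorphisms, and handle the nonabelian minimal normal subgroups via Proposition~\ref{prop:min_norm} together with the solubility of outer automorphism groups of simple groups. The differences are purely presentational: you embed $G/F^*(G)$ into $\out(F^*(G))$ and spell out the identification $F^*(G)=\Soc(G)$ in detail, whereas the paper embeds $G/Z(G)$ into $\prod_i G/C_G(N_i)$ and quotes that identification.
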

\begin{proof}
Since the relations $\equiv_\rem$ and $\equiv_\rc$  coincide,
$\frat(G)=1$ by Proposition~\ref{metab}(1),
 and consequently $\Soc(G)=F^*(G),$ where $F^*(G)$ is
the generalized Fitting subgroup of $G$.

Let $F^*(G)=Z(G)\times N_1\times \dots \times N_t,$ where
$N_1,\dots,N_t$ are non-central minimal normal subgroups. Since
$Z(G)=C_G(F^*)=\bigcap_i C_G(N_i),$ we have $G/Z(G)\leq \prod_i
G/C_G(N_i).$ To conclude, notice that if $N_i$ is abelian, then $N_i$
is cyclic and $G/C_G(N_i)$ is abelian, while if $N_i$ is nonabelian,
then by Proposition~\ref{prop:min_norm} the group $N_i\cong S_i^{t_i}$ with $t_i\leq 2$ and $G/(N_iC_G(N_i))\leq\out S \wr \perm(t_i),$ which is soluble.
\end{proof}

\begin{prob}
Find an equivalence relation that is easier to calculate than
$\equiv_\rem$, but coarser than $\equiv_\rc$. Determine for which
insoluble groups $G$ the relations $\equiv_{\rem}$ and $\equiv_{\rc}$
coincide.
\end{prob}

\subsection{Asymptotics and enumeration}

We now briefly suggest some directions for further study of the
asymptotics of our new relations.

\begin{prop}
Let $G$ be $\sym_n$ or $\alt_n$. Then for almost all elements 
$x,y\in G$ (all but a
proportion tending to $0$ as $n\to\infty$), the following are equivalent:
\begin{enumerate}\itemsep0pt
\item[(1)] $x\equiv_\rem y$;
\item[(2)] $x\equiv_\rem^{(2)} y$;
\item[(3)] 
the cycles of $x$ and $y$ induce the same partition of $\{1,\ldots,n\}$.
\end{enumerate}
\end{prop}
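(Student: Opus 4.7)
The implication $(1) \Rightarrow (2)$ is immediate from Lemma~\ref{lem:m-basics}(1), and $(1) \Rightarrow (3)$ holds for every pair: if the cycle partitions differ then some subset $A$ is a union of cycles of exactly one of $x, y$, and the intransitive maximal subgroup $(\sym_A \times \sym_{A^c}) \cap G$ separates $x$ from $y$. What remains is to prove $(3) \Rightarrow (1)$ and $(2) \Rightarrow (3)$ for a $1 - o(1)$ fraction of pairs, and I plan to derive both from the theorem of \L{}uczak and Pyber that the proportion of permutations in $\sym_n$ contained in some transitive proper subgroup other than $\alt_n$ tends to $0$. Call $x \in G$ \emph{generic} if it lies in no such subgroup; then almost all elements of $G$ are generic, and the maximal subgroups of $G$ containing a generic $x$ are precisely the Young subgroups $(\sym_A \times \sym_{A^c}) \cap G$ with $A$ a union of cycles of $x$, together with $\alt_n$ when $G = \sym_n$ and $x$ is even. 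Since both the collection of such $A$ and the sign $(-1)^{n - c(x)}$ depend only on the cycle partition, $(3) \Rightarrow (1)$ follows at once for generic pairs.

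For $(2) \Rightarrow (3)$ I take $x, y$ generic with distinct cycle partitions. After possibly swapping $x$ and $y$ we may choose $A$ that is a union of cycles of $x$ but not of $y$ (such $A$ exists, since otherwise each cycle partition would refine the other and they would coincide). Setting $M := (\sym_A \times \sym_{A^c}) \cap G$, we have $x \in M$ and $y \notin M$, so any $z \in M$ satisfies $\langle x, z \rangle \leq M \neq G$; it therefore suffices to produce $z \in M$ with $\langle y, z \rangle = G$. By genericity of $y$, the only proper overgroups of $\langle y, z \rangle$ are $\alt_n$ (when $G = \sym_n$ and $y, z$ are both even) and Young subgroups $(\sym_B \times \sym_{B^c}) \cap G$ with $B$ a union of cycles of $y$. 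Since generic $y$ has $O(\log n)$ cycles there are at most $n^{O(1)}$ candidate $B$'s, and for $z$ uniform in $M$ the probability of preserving any particular $B \neq A, A^c$ equals $\binom{|A|}{|A \cap B|}^{-1} \binom{|A^c|}{|A^c \cap B|}^{-1}$. A union bound over $B$, combined with the at-most-one-half penalty from the parity condition, then guarantees that a positive proportion of $z \in M$ satisfies $\langle y, z \rangle = G$, furnishing the required witness and proving $x \not\equiv_\rem^{(2)} y$.

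The main obstacle is making the union bound rigorous when $|A|$ or $|A^c|$ is small, since then the binomial coefficients grow slowly and the individual preservation probabilities are not automatically negligible. I plan to handle this by strengthening genericity to require additionally that $x$ has no cycles shorter than some slowly growing length threshold (an almost-sure condition that excludes only an $o(1)$ proportion of elements), so that admissible $A$ never has very small size, and then by treating the handful of remaining ``short-$A$'' configurations separately via an ad hoc construction of the witness $z$. With these refinements the union bound closes uniformly in $n$ and the proposition follows.
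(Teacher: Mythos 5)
Your treatment of $(1)\Leftrightarrow(3)$ for generic pairs is essentially the paper's (via {\L}uczak--Pyber and the classification of the maximal overgroups of a generic element), but two of your supporting claims are wrong, and one of them sinks your argument for $(2)\Rightarrow(3)$. A minor one first: $(1)\Rightarrow(3)$ does \emph{not} hold for every pair. The subgroup $(\sym_A\times\sym_{A^c})\cap G$ need not be maximal (it is not when $|A|=n/2$), and in fact $(12)(34)\equiv_\rem(13)(24)$ in $\sym_4$ although their cycle partitions differ; the implication holds for generic pairs, which is all the proposition needs, but not universally. The serious problem is your repair of the union bound. You propose to strengthen genericity by insisting that $x$ have no cycle shorter than a slowly growing threshold $m(n)$, calling this ``an almost-sure condition that excludes only an $o(1)$ proportion of elements''. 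This is false, and in the opposite direction: the proportion of elements of $\sym_n$ with no cycle of length less than $m$ is asymptotically $e^{-(1+\frac12+\cdots+\frac{1}{m-1})}\sim e^{-\gamma}/m$, which tends to $0$ as $m\to\infty$. So your strengthened genericity excludes almost \emph{all} elements, and the configurations you defer to it are exactly the ones that remain. Relatedly, the bound $n^{O(1)}$ on the number of candidate sets $B$ rests on the claim that a generic $y$ has $O(\log n)$ cycles, which does not follow from the {\L}uczak--Pyber property or from anything else you assume; it would have to be added as a further almost-sure hypothesis and justified.

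The probabilistic detour is in any case unnecessary, and the paper avoids it. For generic $y$, one has $\langle y,z\rangle=G$ precisely when the hypergraph on $\{1,\ldots,n\}$ whose edges are the cycles of $y$ and of $z$ is connected and, in the case $G=\sym_n$, not both of $y,z$ lie in $\alt_n$. So instead of a random $z\in M=(\sym_A\times\sym_{A^c})\cap G$, take $z$ to be a single cycle on $A$ together with a single cycle on $A^c$, splitting one of them into two cycles if necessary to arrange the correct parity. Since $A$ is not a union of cycles of $y$, some cycle of $y$ meets both $A$ and $A^c$, so the hypergraph is connected and $\langle y,z\rangle=G$, while $\langle x,z\rangle\le M<G$. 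This gives $(2)\Rightarrow(3)$ for generic pairs in a few lines, and is essentially the paper's observation that for generic $x$ the entire neighbourhood of $x$ in the generating graph is determined by, and determines, the cycle partition of $x$.
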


\begin{proof} This depends on a  theorem of {\L}uczak and Pyber~\cite{lp},
which states that for almost all $x\in \sym_n$, the only transitive subgroups
of $\sym_n$ containing $x$ are $\sym_n$ and (possibly) 
$\alt_n$. We restrict our
attention to these elements $x$.

Consider first the case where $G=\sym_n$. Then, apart from $\alt_n$, 
the maximal
subgroups containing $x$ are of the form $\sym_k\times \sym_{n-k}$, where the two
orbits are unions of cycles of $x$. Moreover, the cycle lengths determine
whether or not $x\in \alt_n$. So (1) and (3) are equivalent.

In addition, for all $z \in G$, we see that
 $\langle x,z\rangle = G$ whenever $\langle x, z \rangle$ is
transitive, and $z\notin \alt_n$ if it happens that
$x\in \alt_n$. Membership of this set is also determined by the cycles
of $x$: the 
transitivity condition requires that the hypergraph whose edges are the cycles
of $x$ and $z$ is connected. So (2) is also equivalent to (3).

If $G=\alt_n$, then only simple modifications are required; the argument is
simpler because no parity conditions are necessary.
\end{proof}

Shalev in \cite{shalev} proved a similar result  for $\gl_n(q)$ to 
{\L}uczak and Pyber's result for $\sym_n$: a
  random element of $\gl_n(q)$ lies in no proper irreducible subgroup
  not containing $\slin_n(q)$. This could be used to prove a similar
  statement for groups lying between $\psl_n(q)$ and $\pgl_n(q)$. 

\begin{qn} 
Are there only
 finitely many finite almost simple 
groups on which the relations $\equiv_{\rem}$ and $\equiv_{\rc}$
coincide? 
\end{qn}

Another very natural question is: how many $\equiv_\rc$- and
$\equiv_\rem$-classes 
are there in the
symmetric group $\sym_n$? The numbers of $\equiv_\rc$-classes
in the symmetric groups $\sym_n$ form sequence A051625 in the On-line
Encyclopedia of Integer Sequences~\cite{oeis}.
The sequence of numbers of $\equiv_\rem$ classes, which begins
\[1,2,5,15,67,362,1479,12210,\ldots\]
has recently been added to
the OEIS, where it
appears as Sequence A270534.

If we cannot find a formula for these sequences, can we say anything about their
asymptotics? We saw above that, for almost all elements of $\sym_n$, the
$\equiv_\rem$-equivalence class is determined by the cycle partition, 
which might suggest
that the sequence grows like the Bell numbers (sequence A000110 in the OEIS).
However, the elements not covered by this theorem can destroy this estimate.

For example, let $p$ be a prime such that the only insoluble
transitive groups of degree $p$ are the symmetric and alternating groups.
Then the above analysis applies to all elements whose cycle type is not
a single $p$-cycle or a fixed point and $l$ $k$-cycles (where $1+kl=p$). It
is easy to show that two elements $x$ and $y$ 
with one of these excluded cycle types satisfy $x \equiv_{\rem}y$
if and only if they satisfy $x \equiv_{\rc} y$. So there
are $(p-2)!$ equivalence classes of $p$-cycles, for example; this number is
much greater than the $p$th Bell number. (In this special case, we can write
down a formula for the number of $\equiv_\rem$-equivalence classes.)

\section{The generating graph of a group}\label{sec:gen}

In the remainder of the paper, we use the relations that we have
defined to study an object of general interest, the generating graph
of a finite group.
\begin{defn}
The \emph{generating graph} of a finite group $G$ is the graph with vertex
set $G$, in which two vertices $x$ and $y$ are joined if and only if
$\langle x,y\rangle=G$. 
\end{defn}

Of course this graph is null unless $G$ is
$2$-generated. We adopt the convention that, if the group is cyclic, then
any generator of the group carries a loop in the generating graph.

A useful concept when studying the generating graph is the spread of a
group.

\begin{defn}\label{def:spread}
A group $G$ has \emph{spread $k$} if $k$ is the largest number such
that for any set $S$ of
$k$ nonidentity elements, there exists $x$ such that $\langle x,s\rangle=G$
for all $s\in S$. 
\end{defn}

Thus the spread is nonzero if and only if no vertex
of the generating graph except the identity is isolated; and spread at least
$2$ implies diameter at most~$2$. 

Among the graph-theoretic invariants which have been studied for this graph
are the following.
\begin{enumerate}\itemsep0pt
\item[(1)] The spread.
\item[(2)] The \emph{clique number}: the largest size of a set of group
elements, any two of which generate the group. 
\item[(3)] The \emph{chromatic number}:  the smallest number of parts in a
partition of the group into subsets containing no $2$-element generating set.
\item[(4)] The \emph{total domination number}: the smallest size of a set $S$ with
the property that, for any element $x$, there exists $s\in S$ such that $x$
and $s$ generate the group.
\item[(5)] The isomorphism type: if $\Gamma(G) \cong \Gamma(H)$ for two
  groups $G$ and $H$, then when is $G \cong H$?
\end{enumerate}

\begin{defn}
In any graph $X$, we can define an equivalence
relation $\equiv_\rg$ by the rule $x\equiv_\rg y$ if $x$ and $y$
have the same set of neighbours in the graph. (Think of $\rg$ as meaning
``graph'', or ``generating'' if we are thinking of the generating graph.)
Then we define a \emph{reduced graph} $\overline{X}$ whose vertices are
the $\equiv_\rg$-classes in $X$, two classes joined in $\overline{X}$ if their
vertices are joined in $X$. 
\end{defn}

Alternatively, we can take the vertex set to be
any set of equivalence class representatives, and the graph to be the induced
subgraph on this set. (The term ``reduced graph'' was used by
Hall~\cite{copolar} in his work on copolar spaces, and consequentially we
term the process of producing it ``reduction''; but we warn readers that
the term ``graph reduction'' has a very different meaning in computer science.)

The reduction process preserves the graph parameters noted above:

\begin{prop}
The clique number, chromatic number, total domination number, and spread of
the generating $\Gamma(G)$ are equal to the corresponding parameters
of the reduced generating graph $\overline{\Gamma}(G)$. Furthermore,
if $\Gamma(G) \cong \Gamma(H)$ then $\overline{\Gamma}(G) \cong
\overline{\Gamma}(H)$. 
\label{p:g_params}
\end{prop}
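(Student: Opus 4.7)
The plan is to exploit a single unifying observation: the relation $\equiv_\rg$ is defined purely in terms of vertex neighbourhoods, and each of the four invariants, as well as the graph isomorphism type modulo reduction, depends only on the neighbourhood structure. Hence replacing each $\equiv_\rg$-class by a single representative should not change the invariants, and any isomorphism $\Gamma(G)\to\Gamma(H)$ must send $\equiv_\rg$-classes to $\equiv_\rg$-classes (since $\equiv_\rg$ is defined by adjacency) and so descends to an isomorphism $\overline{\Gamma}(G)\to\overline{\Gamma}(H)$.

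The key structural observation that makes everything go is that, away from the cyclic case, distinct $\equiv_\rg$-equivalent vertices of $\Gamma(G)$ are non-adjacent: if $x\equiv_\rg y$ with $x\sim y$, then $y\in N(x)=N(y)$, forcing a loop at $y$, which under the loop convention of the paper requires $G$ to be cyclic. With this in hand, for the clique number one observes that any clique meets each class in at most one vertex and so projects injectively into a clique of $\overline{\Gamma}(G)$, while any clique of $\overline{\Gamma}(G)$ lifts by choosing one representative per class. For the chromatic number, any proper colouring may be modified to assign a single colour to each class, setting up a correspondence with proper colourings of $\overline{\Gamma}(G)$. For the total domination number, the classes meeting a total dominating set of $\Gamma(G)$ totally dominate $\overline{\Gamma}(G)$ with no more elements, and conversely a totally dominating class-set lifts by picking one representative per class to a totally dominating vertex-set of the same size. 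For the spread, the fact that the neighbourhood in $\Gamma(G)$ of a vertex depends only on its class means that $k$ distinct non-identity elements of $G$ admit a common neighbour iff the $m\leq k$ classes they occupy admit a common neighbour-class in $\overline{\Gamma}(G)$; conversely, $k$ distinct non-identity classes always admit $k$ distinct non-identity representatives, giving the reverse implication.

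The only genuine subtlety is the cyclic case, where the loop convention allows equivalent generators to be mutually adjacent; here both $\Gamma(G)$ and $\overline{\Gamma}(G)$ reduce to very small structures (a looped complete graph on the set of generators plus the isolated identity class, versus a single looped vertex plus the isolated identity) that can be handled by direct inspection. The isomorphism statement then follows because any graph isomorphism respects the purely combinatorial relation $\equiv_\rg$ and therefore induces a well-defined isomorphism $\overline{\Gamma}(G)\to\overline{\Gamma}(H)$. I expect the cyclic case to be the lone technical obstacle, with the rest of the proof being routine bookkeeping once one has recognised that each invariant is a function of the neighbourhood structure alone.
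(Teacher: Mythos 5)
Your argument is correct and is exactly the routine verification the paper has in mind: its entire proof of this proposition is the single word ``Clear.'' Your observations that $\equiv_\rg$-equivalent vertices are non-adjacent outside the (looped) cyclic case and that each invariant is a function of the neighbourhood structure alone supply precisely the details the authors omit.
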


\begin{proof}
Clear.
\end{proof}

The following is immediate from
the definition of $\equiv_\rem^{(r)}$.

\begin{prop}
Let $G$ be a finite group. Then 
the relations $\equiv_\rg$ on $\Gamma(G)$ and $\equiv_\rem^{(2)}$ on
$G$ coincide; hence
$\equiv_\rem$ is a refinement of $\equiv_\rg$, and is equal to
$\equiv_\rg$ if and only if $\psi(G) \leq 2$.  
\end{prop}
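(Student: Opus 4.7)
The plan is to unpack the definitions, which is essentially all the statement requires. First I would establish that $\equiv_\rg$ on $\Gamma(G)$ coincides with $\equiv_\rem^{(2)}$ on $G$. By the definition of $\Gamma(G)$, together with the stated convention that in the cyclic case each generator carries a loop, the neighbour set of a vertex $x$ is exactly $\{z\in G : \langle x,z\rangle = G\}$; the loop convention ensures this set is described correctly even when $z=x$ (a loop at $x$ occurs precisely when $\langle x\rangle = G$). Therefore two elements $x,y$ have the same neighbour set in $\Gamma(G)$ if and only if, for every $z\in G$, $\langle x,z\rangle = G$ is equivalent to $\langle y,z\rangle = G$, and this is precisely the defining condition of $\equiv_\rem^{(2)}$. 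Hence $\equiv_\rg$ and $\equiv_\rem^{(2)}$ agree as equivalence relations on $G$.

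For the second assertion I would invoke Lemma~\ref{lem:m-basics}: part (1) says the $\equiv_\rem^{(r)}$ get finer as $r$ increases, while part (3) identifies $\equiv_\rem$ as their limit. Consequently $\equiv_\rem$ refines $\equiv_\rem^{(r)}$ for every $r$, and in particular $\equiv_\rem$ refines $\equiv_\rem^{(2)} = \equiv_\rg$.

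Finally, to deduce the equivalence $\equiv_\rem = \equiv_\rg \iff \psi(G)\le 2$, I would argue as follows. By definition $\psi(G)$ is the least $r$ with $\equiv_\rem^{(r)}\,=\,\equiv_\rem$; and since the sequence is monotone, $\equiv_\rem^{(r)} = \equiv_\rem$ for every $r\ge\psi(G)$. Thus $\equiv_\rem = \equiv_\rem^{(2)}$ is equivalent to $\psi(G)\le 2$. Combined with the first step, this reads $\equiv_\rem = \equiv_\rg \iff \psi(G)\le 2$.

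There is no real obstacle here; the argument is purely definitional. The one point that merits a moment of care is the loop convention: without it the equivalence of neighbour sets for cyclic $G$ would not literally match the $z$-quantifier in the definition of $\equiv_\rem^{(2)}$, so I would note explicitly that the loops are exactly what makes the two conditions agree when $z\in\{x,y\}$.
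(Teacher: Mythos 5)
Your argument is correct and matches the paper's intent exactly: the paper treats this proposition as immediate from the definitions, and your write-up simply makes that definitional unwinding explicit (including the careful remark about the loop convention, which the paper leaves implicit). No discrepancy to report.
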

Hence, in what follows, we shall write $\equiv_{\rg}$ to denote
$\equiv_{\rem}^{(2)}$. 

Recall Definition~\ref{def:efficient} of efficient generation.

\begin{theorem}\label{thm:spread_phi}
Let $G$ be a finite group with $d(G) = 2$. 
\begin{enumerate}
\item[(1)] $G$ has nonzero spread
if and only if $G$ is efficiently generated and has trivial
Frattini subgroup. 
\item[(2)] If $G$ is soluble and has nonzero spread, then $\psi(G) =
  2$.
\end{enumerate}
\end{theorem}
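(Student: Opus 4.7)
The plan is to prove both parts directly from the definitions, with part~(2) following essentially for free from part~(1) combined with Corollary~\ref{psisol}.

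First I would tackle (1). For the forward direction, suppose $G$ has nonzero spread. If some nontrivial $s \in \frat(G)$ existed, then by the definition of spread there is an $x$ with $\langle x, s \rangle = G$; but Frattini elements are non-generators, so $\langle x \rangle = G$, contradicting $d(G) = 2$. Hence $\frat(G) = 1$. With that established, nonzero spread says exactly that for every $x \neq 1$ there exists $y$ with $\langle x, y \rangle = G$, i.e.\ $d_{\{x\}}(G) \leq 1 < d(G)$ for every $x \notin \frat(G) = \{1\}$. This is precisely the definition of efficient generation when $d(G) = 2$.

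For the reverse direction of (1), assume $\frat(G) = 1$ and $G$ is efficiently generated. Pick any $s \neq 1$. Then $s \notin \frat(G)$, so efficient generation forces $d_{\{s\}}(G) < d(G) = 2$. The value cannot be $0$, since that would make $G$ cyclic and thus $d(G) = 1$; so $d_{\{s\}}(G) = 1$, producing a partner $x$ with $\langle x, s \rangle = G$, as needed for nonzero spread.

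Part~(2) is then immediate: by (1), nonzero spread implies $G$ is efficiently generated, and for soluble $G$ Corollary~\ref{psisol} identifies efficient generation with $\psi(G) = d(G)$, yielding $\psi(G) = 2$. The main obstacle is not really technical --- the statement is a bookkeeping unification of two a priori different notions (nonzero spread versus efficient generation plus trivial Frattini subgroup) that collapse to the same condition when $d(G) = 2$, with the substantive work having already been carried out in Corollary~\ref{psisol}.
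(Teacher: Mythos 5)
Your proof is correct and follows essentially the same route as the paper: both directions of (1) reduce to the observation that, when $d(G)=2$, nonzero spread is equivalent to $d_{\{x\}}(G)=1$ for all $x\neq 1$, which is exactly efficient generation together with $\frat(G)=1$ (via the non-generator property of Frattini elements), and (2) is then immediate from Corollary~\ref{psisol}. The paper's write-up is merely terser, dismissing the converse of (1) as clear where you spell it out.
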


\begin{proof}
(1) Since the spread of $G$ is nonzero, every nonidentity element of $G$ lies in a
$2$-element generating set of $G$, so $d_{x}(G) = 1$ unless $x =
1$. Hence $G$ is efficiently generated and $\frat(G) = 1$. The
converse is clear.

(2) By Part (1), the assumption that $G$ has nonzero spread implies
that $G$ is efficiently generated. Hence from
Corollary~\ref{psisol}, we see that $\psi(G) = d(G) = 2$. 
\end{proof}

Notice that it is immediate from Theorem~\ref{thm:spread_phi} that if
$G$ is a $2$-generator group of spread $0$ and trivial Frattini subgroup, then
$\psi(G) \geq 3$. 
For example, 
double transpositions are isolated vertices in
$\Gamma(\sym_4)$, and so are equivalent to the identity under $\equiv_\rg$,
though clearly not under $\equiv_\rem$. In fact this group has fourteen
$\equiv_\rg$-classes but fifteen
$\equiv_\rem$-classes, and as previously noted $\psi(\sym_4)=3$.

We shall therefore proceed for much of the following section
by restricting to groups with nonzero
spread, despite that fact
that we 
don't know whether
Theorem~\ref{thm:spread_phi}(2)
is also true 
without the solubility assumption.

\begin{conj}
Let $G$ be a finite group of nonzero spread. Then $\psi(G) \leq 
2$. 
\end{conj}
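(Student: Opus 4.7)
The plan is to reduce the conjecture to a substitution property inside each maximal subgroup, and then to attack it via the O'Nan--Scott classification. First, I would observe that the conjecture reduces to the following claim: \emph{for every maximal subgroup $M$ of $G$ and every $x \in G \setminus M$, there exists $z \in M$ with $\langle x, z \rangle = G$.} Indeed, given $x \not\equiv_\rem y$, after possibly swapping $x$ and $y$ we may find a maximal $M$ with $y \in M$ and $x \notin M$; any witness $z$ then satisfies $\langle x, z \rangle = G \neq \langle y, z \rangle$, so $x \not\equiv_\rem^{(2)} y$ and the $\equiv_\rem^{(2)}$-class of $x$ is distinguished from that of $y$.

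Next, I would reduce to the case $\Core_G(M) = 1$ using Gasch\"utz' Lemma~\ref{modg}. Set $N = \Core_G(M)$; by Theorem~\ref{thm:spread_phi}(1), nonzero spread yields $\frat(G) = 1$ and $d_{\{g\}}(G) \le 1$ for every $g \neq 1$. Consequently, if $\bar z \in M/N$ generates $G/N$ together with the image of $x$, then Lemma~\ref{modg} applied with $X = \{x\}$ and $k = 1$ produces an $N$-perturbation of any lift of $\bar z$ that still lies in $M$ (as $N \le M$) and generates $G$ together with $x$. The problem is thereby reduced to primitive permutation groups $G$ of nonzero spread with $d(G) = 2$.

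For such $G$ the argument would split by socle type. If $\Soc(G) = V$ is abelian, nonzero spread forces the point stabiliser $H$ to be cyclic: otherwise, for any nonzero $v \in V$, the element $(v, 1)$ has no generating partner in $G$, since a partner must project to a generator of $H$ under $G \to G/V$. This returns us to the soluble case handled by Theorem~\ref{thm:spread_phi}(2). If $\Soc(G) = T$ is nonabelian simple, the required statement strengthens the uniform-spread results of Breuer--Guralnick--Kantor and Burness--Guralnick--Harper, from ``some $z \in G$ generates with $x$'' to ``some $z \in M$ generates with $x$''; I would attempt to bound the proportion of $z \in M$ with $\langle x, z \rangle \neq G$ by fixed-point-ratio estimates in the style of Liebeck--Shalev, and show it is strictly less than $1$.

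The principal obstacle lies in the remaining O'Nan--Scott classes, where $\Soc(G) = T^k$ with $T$ nonabelian simple and $k \ge 2$. Here the maximal subgroup $M$ (of product, diagonal, or twisted-wreath type) has a rigid component structure that constrains its available conjugacy classes and weakens the usual probabilistic bounds. The natural move is to project onto a single simple factor and reduce to the almost simple case, but combining this projection with the demand that $z$ lie inside $M$ looks genuinely harder: the witness $z$ must have the correct ``shape'' across all factors of the socle, and may not exist inside $M$ at all. This is the step at which I expect the argument to require substantially new ingredients, and is presumably why the authors leave the statement as a conjecture.
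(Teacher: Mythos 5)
The statement you are addressing is an open conjecture: the paper contains no proof of it, only a discussion of partial approaches, so there is nothing to match your argument against. What can be assessed is whether your plan could work, and its first step already fails. Your proposed reduction --- that it suffices to show that for every maximal subgroup $M$ and every $x \in G \setminus M$ there exists $z \in M$ with $\langle x, z\rangle = G$ --- is exactly the sufficient condition the authors extract from Lemma~\ref{lemma2} (equivalently, $\tilde m(G) \leq 1$), and your Gasch\"utz step via Lemma~\ref{modg} reproduces their reduction to the core-free case. The problem is that this intermediate claim is strictly stronger than the conjecture and is \emph{false} for groups that nonetheless satisfy it: the paper notes, immediately after stating the conjecture, that the condition fails for $\alt_5$ with respect to the maximal subgroups isomorphic to $\sym_3$, and for $\psl_2(q)$ with $q = 8, 9, 13$, even though $\psi(G) = 2$ in all of these cases. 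So the proportion of $z \in M$ generating $G$ with a given $x \notin M$ can be exactly zero, and no fixed-point-ratio estimate in the almost simple case can rescue the argument.

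Consequently, your diagnosis of where the difficulty lies is also off: the obstruction is not confined to the product- and diagonal-type O'Nan--Scott classes but appears already for simple groups of small order. The reason is that requiring the witness $z$ to lie in $M$ discards essential freedom. To separate $x$ from $y$ under $\equiv_\rem^{(2)}$ one only needs some $z$ with $\langle x, z\rangle = G$ and $y, z$ contained in a common maximal subgroup --- not necessarily the particular $M$ witnessing $x \not\equiv_\rem y$, and not with $z \in M$ at all. Any successful attack on the conjecture must exploit this weaker requirement; as it stands, your plan reduces the conjecture to a false statement and therefore cannot be completed along the proposed lines.
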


By Lemma~\ref{lemma2}, if $G$ is a group with nonzero spread, then
$\psi(G) = 2$ whenever for all maximal subgroups $M$, 
and for all  $x\notin M$, there exists $z\in M$ such that $\langle x,z\rangle=G$.
This approach can be applied to $\sym_5$, $\psl_2(7)$, and
$\psl_2(11)$. However, it fails in the case of  $\alt_5$ 
with respect to the smallest
maximal subgroups (isomorphic to $\sym_3$). It also fails for $\psl_2(q)$
for $q=8,9,13$, even though $\psi(G) = 2$ for all of these groups. 



\section{Automorphism groups}\label{sec:aut}

A striking thing about generating graphs 
is that they have huge automorphism groups, and these groups are
poorly understood. For example, 
the automorphism group of the generating graph of the alternating
group $\alt_5$ has order
$2^{31}3^75$.

The reason is simple. Any nontrivial 
element of $\alt_5$ has order $2$, $3$ or $5$. An
element of order $3$ or $5$ can be replaced by a nonidentity power of itself
in any generating set. Thus the sets of nonidentity powers can be permuted
arbitrarily, and we find a group of order $2^{10}(4!)^6=2^{28}3^6$ of
automorphisms fixing these sets. The quotient has order $120$ and is isomorphic
to $\Aut(\alt_5)=\sym_5$.

Hence, for $G=\alt_5$, the automorphism group of the generating graph
$\Gamma(G)$ has a normal subgroup which is the direct
product of symmetric groups on the $\equiv_\rg$-classes, and the quotient is
the automorphism group of the reduced graph $\overline{\Gamma}(G)$.
In general, a similar statement holds, but to state it we require one
further definition. 

\begin{defn}
We define a \emph{weighting} of the reduced generating graph, by assigning
to each vertex a weight which is the cardinality of the corresponding
$\equiv_\rg$-class. Now let $\overline{\Gamma}_\rw(G)$ denote the weighted
graph, and let $\Aut(\overline{\Gamma}_\rw(G))$ be the group of
\emph{weight-preserving automorphisms} of $\overline{\Gamma}_\rw(G)$. 
\end{defn}

Note that the restriction to $\Aut(\overline{\Gamma}_\rw(G))$ is
necessary, as in general
 an automorphism of $\overline{\Gamma}(G)$ can fail to lift to an
automorphism of $\Gamma(G)$. 
For an example of this, take $G =
\psl_2(16)$. Then $\Aut(\overline{\Gamma}(G)) \cong 2 \times
\Aut(\psl_2(16))$. However, the central involution interchanges elements of
order $3$ with elements of order $5$. The $\equiv_{\rem}$-class of the elements of
order $3$ has size $2$, and contains only the elements and their
inverses. However, the $\equiv_{\rem}$-class of elements of order $5$ has size $4$
(it clearly contains all nontrivial elements of the 
cyclic subgroup, but in fact contains
no more than this).

The following theorem shows that to describe the automorphism
group of $\Gamma(G)$, it suffices to know the multiset of sizes of the
$\equiv_{\rg}$-classes of $G$, and the automorphism group of
$\overline{\Gamma}_{\rw}(G)$. 

\begin{theorem}\label{thm:aut_g}
Let the $\equiv_\rg$-classes of a finite group
$G$ be of sizes $k_1, \ldots, k_n$. Then 
$$A:= \Aut(\Gamma(G)) = \left(\sym_{k_1} \times \cdots \times \sym_{k_n} \right) \, : \,
\Aut(\overline{\Gamma}_\rw(G)).$$
\end{theorem}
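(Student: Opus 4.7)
The plan is to exhibit a short exact sequence
\[ 1 \longrightarrow K \longrightarrow A \xrightarrow{\pi} \Aut(\overline{\Gamma}_\rw(G)) \longrightarrow 1, \]
with $K = \sym_{k_1} \times \cdots \times \sym_{k_n}$, and then to split it by an explicit section.

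First I would observe that two elements of $G$ lie in the same $\equiv_\rg$-class precisely when they have identical neighborhoods in $\Gamma(G)$; hence $\equiv_\rg$-classes are preserved as blocks by every $\alpha \in A$. This yields the homomorphism $\pi: A \to \perm(\{C_1,\ldots,C_n\})$. Its image lies in $\Aut(\overline{\Gamma}_\rw(G))$, because $\alpha$ preserves $\overline{\Gamma}(G)$-adjacency (which is inherited from $\Gamma(G)$-adjacency) and preserves class sizes (as $\alpha$ is a bijection of $G$). For the kernel: any permutation of $G$ that fixes every $C_i$ setwise is automatically an automorphism of $\Gamma(G)$, since inside a single class all elements share the same neighborhood, and (in the cyclic case) share the presence or absence of a loop. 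This identifies $\ker \pi$ with the direct product $K = \prod_i \sym_{k_i}$ acting separately on each class, and in particular gives $K \trianglelefteq A$.

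To see $\pi$ is surjective, for any $\bar\sigma \in \Aut(\overline{\Gamma}_\rw(G))$ the weight-preservation condition $|C_i|=|\bar\sigma(C_i)|$ lets me pick arbitrary bijections $C_i \to \bar\sigma(C_i)$ and paste them into a permutation $\sigma$ of $G$; adjacency is then preserved in $\Gamma(G)$ because it is preserved in $\overline{\Gamma}(G)$. Finally I would split $\pi$ by fixing, once and for all, an arbitrary labeling of each $C_i$ by $\{1,\dots,k_i\}$, and for each $\bar\sigma \in \Aut(\overline{\Gamma}_\rw(G))$ taking the distinguished lift $\tilde\sigma$ that sends the $\ell$-th element of $C_i$ to the $\ell$-th element of $\bar\sigma(C_i)$, for every $i$ and $\ell$. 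Tracking any labeled element through both $\tilde\sigma\tilde\tau$ and $\widetilde{\bar\sigma\bar\tau}$ shows that they agree, so $\bar\sigma \mapsto \tilde\sigma$ is a homomorphic section of $\pi$, whose image is a complement to $K$ in $A$. The main obstacle is conceptual rather than technical: pinpointing the role of the weighting, since without requiring the quotient to preserve weights the lift step would fail, which is exactly why $\Aut(\overline{\Gamma}(G))$ must be replaced by $\Aut(\overline{\Gamma}_\rw(G))$, as the $\psl_2(16)$ example in the paper illustrates.
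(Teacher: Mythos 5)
Your proposal is correct and follows essentially the same route as the paper: both identify the kernel of the action of $\Aut(\Gamma(G))$ on the $\equiv_\rg$-classes with $\sym_{k_1}\times\cdots\times\sym_{k_n}$, prove surjectivity onto $\Aut(\overline{\Gamma}_\rw(G))$ by lifting via fixed orderings of the classes, and use that same lift as a splitting. Your explicit attention to weight-preservation of the induced permutation and to the loop convention in the cyclic case are small points the paper leaves implicit, but the argument is the same.
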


\begin{proof}
Let $N:= \prod_{i = i}^n \sym_{k_i}$. First we show that $N \leq A$,
then that $A$ is an extension of $N$ by a subgroup of 
$\Aut(\overline{\Gamma}_\rw(G))$, and finally that the whole of
$\Aut(\overline{\Gamma}_\rw(G))$ is induced by $A$,  and the extension
splits.

For the first claim, let $x, y \in G$ such that $x \equiv_{\rg}
y$. Then for all $z \in G$, there is an edge from $x$ to $z$ if and
only if there is an edge from $y$ to $z$. Hence the map interchanging
$x$ and $y$ and fixing all other vertices in $\Gamma(G)$ is an
automorphism of $\Gamma(G)$, so $N \leq A$.

For the second, we show that $A$ acts on the $\equiv_{\rg}$-classes of
$\Gamma(G)$. For $z \in G$, write $N(z)$ for the set of neighbours of $z$
in $\Gamma(G)$. Suppose that $x \equiv_{\rg} y$, as before. Then for all
 $a \in A$  we see that $$N(x^a) = N(x)^a = N(y)^a = N(y^a),$$ and so
 $x^a \equiv_{\rg} y^a$, as required. Hence $A$ is an extension of $N$
 by a subgroup of $\Aut(\overline{\Gamma}_\rw(G))$.

For the final claim, fix an ordering of the elements in each
$\equiv_{\rg}$ class of $G$, and identify the vertices of $\Gamma(G)$ with the
ordered pairs $\{(i, j) \ : \ 1 \leq j \leq n,  \ 1 \leq i \leq
k_j\}$. Let $\sigma \in \Aut(\overline{\Gamma}_{\rw}(G))$, and let
$j_1, j_2$ be adjacent vertices in $\overline{\Gamma}_{\rw}(G)$, so
that $j_1^\sigma$ and $j_2^\sigma$ are also adjacent. Then $k_{j_1} =
k_{j_1^\sigma}$, and for $1 \leq i \leq k_{j_1}$ vertex $(i, j_1)$ is
adjacent to vertex $(i, j_2)$. Hence we can define $\tau$ to be the
map sending $(i, j)$ to $(i, j^\sigma)$, and then $\tau \in
\Aut(\Gamma(G))$ induces $\sigma$. The result follows.
\end{proof}


Note that $\Aut(G)$ preserves the generating graph $\Gamma(G)$, and
hence
automorphisms of $G$ permute the $\equiv_\rg$-classes.
We define $\Aut^*(G)$ be the group induced by $\Aut(G)$ on
$\overline{\Gamma}(G)$. The following is clear. 

\begin{prop}
Let $G$ be a group with $d(G) \leq 2$. Then
\[\Aut^*(G)\le \Aut(\overline{\Gamma}_\rw(G))\le\Aut(\overline{\Gamma}(G)).\]
\end{prop}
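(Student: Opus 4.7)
The plan is to verify the two inclusions separately; both are essentially direct consequences of the definitions, once one checks that automorphisms of $G$ do indeed act on the weighted reduced graph.

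For the right-hand inclusion $\Aut(\overline{\Gamma}_{\rw}(G)) \leq \Aut(\overline{\Gamma}(G))$, I would simply observe that by construction $\Aut(\overline{\Gamma}_{\rw}(G))$ is the subgroup of $\Aut(\overline{\Gamma}(G))$ consisting of those graph automorphisms that additionally preserve the weight function. So forgetting the weights gives the desired inclusion; no real work is needed.

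For the left-hand inclusion $\Aut^*(G) \leq \Aut(\overline{\Gamma}_{\rw}(G))$, I would take $\alpha \in \Aut(G)$ and first note that $\alpha$ preserves the generating graph $\Gamma(G)$, since $\langle x, y\rangle = G$ if and only if $\langle x^\alpha, y^\alpha\rangle = G$. As $\equiv_\rg = \equiv_{\rem}^{(2)}$ is defined purely in terms of equality of neighbourhoods in $\Gamma(G)$ (using the observation preceding Theorem~\ref{thm:aut_g}), the automorphism $\alpha$ permutes the $\equiv_\rg$-classes and so descends to a well-defined permutation $\alpha^*$ of the vertex set of $\overline{\Gamma}(G)$. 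The fact that adjacency in $\overline{\Gamma}(G)$ is inherited from adjacency in $\Gamma(G)$ then immediately gives that $\alpha^*$ is a graph automorphism of $\overline{\Gamma}(G)$.

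It remains to check that $\alpha^*$ preserves weights. This is the only substantive point, but it is straightforward: since $\alpha$ is a bijection of $G$ that maps each $\equiv_{\rg}$-class $C$ bijectively onto another $\equiv_{\rg}$-class $C^{\alpha^*}$, we have $|C| = |C^{\alpha^*}|$, so $\alpha^* \in \Aut(\overline{\Gamma}_{\rw}(G))$. Finally, $\Aut^*(G)$ is by definition the image of the homomorphism $\alpha \mapsto \alpha^*$, so $\Aut^*(G) \leq \Aut(\overline{\Gamma}_{\rw}(G))$. There is no serious obstacle here; the proof is a short diagram chase from the definitions.
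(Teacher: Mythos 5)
Your proof is correct, and it fills in exactly the details the paper leaves implicit: the paper states this proposition without proof (prefacing it only with ``The following is clear''), and your argument --- weight-preserving automorphisms form a subgroup by definition, while automorphisms of $G$ preserve $\Gamma(G)$, hence permute the $\equiv_\rg$-classes bijectively and so descend to weight-preserving automorphisms of $\overline{\Gamma}_\rw(G)$ --- is precisely the intended routine verification.
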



In the remainder of the paper we shall analyse these three
automorphism groups,
concentrating on the groups $G$ with nonzero spread.
Such a group $G$ has no non-cyclic proper quotients. Moreover (see
for example \cite{LMRD}), it satisfies
one of the following:
\begin{enumerate}\itemsep0pt
\item[(1)] $G$ is cyclic;
\item[(2)] $G\cong C_p\times C_p$ for some prime $p$;
\item[(3)] $G$ is the semi-direct product of its unique minimal normal subgroup
$N$ (which is elementary abelian) by an irreducible subgroup $C$ of a Singer
cycle acting on $N$;
\item[(4)] $G$ has a normal subgroup $N\cong T_1\times\cdots\times T_r$,
where $T_1,\ldots,T_r$ are isomorphic nonabelian simple groups; $G/N$ has
order $rm$ for some $m$ dividing $|\mathrm{Out}(T_1)|$, and induces a cyclic
permutation of the factors.
\end{enumerate}

We shall show that $\Aut^*(G)$  is trivial for groups of
type (1), and is equal to $\Aut(G)$ for groups of type
(3) and (4). Furthermore, we shall show that in type (1) there is 
a spectacularly large gap between  $\Aut(\overline{\Gamma}(G))$ 
and $\Aut(\overline{\Gamma}_{\rw}(G))$, whilst in type (2) and
(3) we find that $\Aut^*(G) \neq \Aut(\overline{\Gamma}_\rw(G))$. 

First we consider the groups of type (1).

\begin{prop}\label{p:cyclic}
Let $G$ be the cyclic group of order $n=p_1^{a_1}p_2^{a_2}\cdots
p_r^{a_r}$. Then $\overline{\Gamma}(G)$ has $2^r$ vertices. The group
$\Aut^*(G)=\Aut(\overline{\Gamma}_\rw(G))$ is trivial, while
$\Aut(\overline{\Gamma}(G)) \cong \sym_r$.
Hence $\Aut(\Gamma(G))=\prod_{I\subseteq\{1,\ldots,r\}}\sym_{n_I}$,
where $$n_I= \frac{n}{p_1p_2 \cdots p_r} \prod_{i \in I} (p_i - 1).$$ 
\end{prop}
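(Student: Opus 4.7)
The plan is as follows. First I identify the $\equiv_\rem$-classes of $G = \langle g \rangle$. The maximal subgroups are exactly $M_i = \langle g^{p_i}\rangle$ for $i = 1, \ldots, r$, and $g^a \in M_i$ if and only if $p_i \mid a$, so each $g^a$ is determined up to $\equiv_\rem$ by the set $J(a) = \{ i : p_i \nmid a\} \subseteq \{1, \ldots, r\}$, and by the Chinese Remainder Theorem every subset occurs. Since $G$ is soluble with $d(G) \leq 1$, Corollary~\ref{cor:sol} gives $\psi(G) \leq 2$, so $\equiv_\rem$ coincides with $\equiv_\rg = \equiv_\rem^{(2)}$; hence $\overline{\Gamma}(G)$ has the $2^r$ subsets $J \subseteq \{1, \ldots, r\}$ as vertices. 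A CRT count gives the weight of the class indexed by $J$ as $\bigl(\prod_{i \in J} p_i^{a_i - 1}(p_i - 1)\bigr)\bigl(\prod_{i \notin J} p_i^{a_i - 1}\bigr) = n_J$.

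Next I describe the adjacency and identify the action of $\sym_r$. Since $\langle g^{a_1}, g^{a_2}\rangle = G$ if and only if $\gcd(a_1, a_2, n) = 1$, this translates to $J(a_1) \cup J(a_2) = \{1, \ldots, r\}$, with $\{1, \ldots, r\}$ carrying a loop (the convention for cyclic groups). The natural permutation action of $\sym_r$ on $\{1, \ldots, r\}$ clearly preserves this adjacency and the weight function.

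I then show $\Aut(\overline{\Gamma}(G)) = \sym_r$ by a short reconstruction argument on the Boolean lattice. The number of neighbours of $J$ equals $|\{J' : J' \supseteq \{1, \ldots, r\} \setminus J\}| = 2^{|J|}$, so any automorphism preserves rank; the unique looped vertex is $\{1, \ldots, r\}$, its unique rank-$0$ neighbour is $\emptyset$, and the singletons $\{i\}$ are characterised as the rank-$1$ (degree $2$) vertices. An automorphism $\sigma$ thus permutes the singletons by some $\pi \in \sym_r$, and since $\{1, \ldots, r\} \setminus \{i\}$ is the unique non-full neighbour of $\{i\}$, it permutes the cosingletons by the corresponding rule. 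Finally, $i \in J$ if and only if $J$ is adjacent to $\{1, \ldots, r\} \setminus \{i\}$, so $\sigma(J) = \pi(J)$ for every $J$.

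To finish, I trivialise $\Aut^*(G)$ and $\Aut(\overline{\Gamma}_\rw(G))$ and apply Theorem~\ref{thm:aut_g}. Every automorphism of $G$ has the form $g^a \mapsto g^{ka}$ with $\gcd(k, n) = 1$, hence $p_i \nmid k$ and so $J(ka) = J(a)$; thus each $\equiv_\rem$-class is fixed setwise, giving $\Aut^*(G) = 1$. The singleton weights $\frac{n}{p_1 \cdots p_r}(p_i - 1)$ are pairwise distinct because the primes $p_i$ are, so any weight-preserving automorphism fixes each singleton and, by the previous step, is the identity; hence $\Aut(\overline{\Gamma}_\rw(G)) = 1$. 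Theorem~\ref{thm:aut_g} then yields $\Aut(\Gamma(G)) = \prod_{I \subseteq \{1, \ldots, r\}} \sym_{n_I}$. The main delicate step is the combinatorial reconstruction identifying $\Aut(\overline{\Gamma}(G))$ with $\sym_r$; everything else is routine bookkeeping.
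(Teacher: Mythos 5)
Your proof is correct and follows essentially the same route as the paper: index the $\equiv_\rem$-classes by subsets of $\{1,\ldots,r\}$, recover $\sym_r$ from the rank-$1$ (singleton) vertices, kill weight-preserving automorphisms using the distinctness of the values $p_i-1$, and invoke Theorem~\ref{thm:aut_g}. The only differences are cosmetic: the paper first quotients by the Frattini subgroup to reduce to squarefree $n$ and identifies the singleton classes via maximality of neighbour sets rather than your degree count $2^{|J|}$, and it indexes classes by the complementary subset.
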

\begin{proof}
First, vertices in the same coset of the Frattini subgroup $\Phi(G)$ get
identified when we reduce the generating graph, and the weights are
multiplied
by $|\Phi(G)| = \frac{n}{p_1 \cdots p_r}$.
 So we can assume that the Frattini
subgroup is trivial, that is, $n= p_1p_2\cdots p_r$.

We know that in this case the $\equiv_{\rg}$- and 
$\equiv_{\rem}$-relations coincide,
and it is more convenient to use the latter. The group has $r$ maximal
subgroups (one of index $p_i$ for each $i$) and the lattice of their
intersections is the lattice of subsets of $\{1,\ldots,r\}$. So, for any subset
$I$ of $\{1,\ldots,r\}$, there is a unique vertex $v_I$ of the reduced graph
corresponding to the intersection of the subgroups of index $p_i$ for $i\in I$;
and $v_I$ is joined to $v_J$ if and only if $I\cap J=\emptyset$.

We claim that the automorphism group of $\overline{\Gamma}(G)$
 is the symmetric group
$\sym_r$. It is clear that $\sym_r$ acts as automorphisms of the 
graph; it suffices
to prove that there are no more.

There is a unique vertex $v_\emptyset$ joined to all others. Apart from this
vertex, there are $r$ vertices whose neighbour sets are maximal with respect
to inclusion, namely $v_{\{i\}}$ for $i=1,\ldots,r$, which must be permuted
by the automorphism group. It suffices to show that only the identity fixes
all these vertices. But any further vertex is uniquely specified by its
neighbours within this set: $v_I$ is joined precisely to $v_{\{j\}}$ for
$j\notin I$.

What is the subgroup of $\sym_r$ fixing the weights? Recall that the weight of
a vertex $v_I$ is the number of elements of $G$ which are equivalent to this
vertex of the reduced graph, that is, which lie in the maximal subgroups
of index $p_i$ for $i\in I$ and no others. This is the number of generators
of the intersection of these maximal subgroups, which is
\[\prod_{j\notin I}(p_j-1).\]
Now it can happen that two of these weights are equal, even for elements in
the same $\sym_r$-orbit. (For example, let $n=2.3.7.13=546$. The subgroups of
orders $2.13$ and $3.7$ each have $12$ generators.)

However, only the identity element of $\sym_r$ preserves all the weights. For
the minimal nonidentity elements $C_{p_i}$ have distinct weights $p_i-1$,
and so all are fixed by the weight-preserving subgroup.
\end{proof}

\begin{prop}\label{prop:square}
Let $G \cong C_p^2$. Then $\overline{\Gamma}(G)$ has $p+2$ vertices,
with $\Aut(G) \cong\gl_2(p)$ and $\Aut^\ast(G)
\cong\pgl_2(p)$. On the other hand, $\Aut(\overline{\Gamma}(G))$
and $\Aut(\overline{\Gamma}_\rw(G))$ are both
 isomorphic to $\sym_{p+1}$, fixing the isolated vertex corresponding
 to the identity. 
Furthermore, 
the group $\Aut(\Gamma(G))=\sym_{p-1}\wr \sym_{p+1}$.
\end{prop}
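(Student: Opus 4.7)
The plan is to realize $G$ as the two-dimensional vector space $\mathbb{F}_p^2$, so that its maximal subgroups are exactly the $p+1$ one-dimensional subspaces (``lines''). Since $G$ is a $p$-group, Theorem~\ref{thm:small_d} gives $\psi(G) = d(G) = 2$, so $\equiv_\rg$ coincides with $\equiv_\rem$. I would then identify the $\equiv_\rem$-classes: each nonzero vector lies in a unique line, and two nonzero vectors of the same line are manifestly $\equiv_\rem$-equivalent, so the classes are $\{0\}$ together with the $p+1$ punctured lines $L \setminus \{0\}$, each of cardinality $p-1$. This accounts for the $p+2$ vertices of $\overline{\Gamma}(G)$.

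Next I would describe $\overline{\Gamma}(G)$ itself: two nonzero elements of $G$ generate $G$ if and only if they span distinct lines, so the reduced graph is the disjoint union of a single isolated vertex (the class $\{0\}$) with the complete graph $K_{p+1}$ on the line-classes. An arbitrary graph automorphism must fix the unique isolated vertex and can permute the remaining $p+1$ vertices of $K_{p+1}$ arbitrarily, giving $\Aut(\overline{\Gamma}(G)) \cong \sym_{p+1}$. Since all $p+1$ line-classes carry the same weight $p-1$, the weight-preserving subgroup is the whole thing, so $\Aut(\overline{\Gamma}_\rw(G)) \cong \sym_{p+1}$ as well.

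For $\Aut(G)$ and $\Aut^*(G)$, I would use the standard identification $\Aut(G) = \gl_2(p)$ and observe that its action on lines is exactly the natural action of $\pgl_2(p) = \gl_2(p)/Z$, where $Z$ is the subgroup of scalar matrices (the kernel of the action on lines); faithfulness of this action gives $\Aut^*(G) \cong \pgl_2(p)$. This is a proper subgroup of $\sym_{p+1}$ for $p \geq 3$, illustrating the $\Aut^*(G) \neq \Aut(\overline{\Gamma}_\rw(G))$ gap already flagged just before the proposition. Finally, the wreath-product description of $\Aut(\Gamma(G))$ drops out of Theorem~\ref{thm:aut_g}: with class sizes $1, p-1, \ldots, p-1$ (one isolated class and $p+1$ line-classes) and with $\sym_{p+1}$ permuting the $p+1$ equal-sized factors, the semidirect product in that theorem collapses to $\sym_{p-1} \wr \sym_{p+1}$. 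No step presents any serious obstacle; the content of the proposition is essentially a translation of elementary linear algebra over $\mathbb{F}_p$ through the general machinery already established.
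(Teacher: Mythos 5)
Your proposal is correct and follows essentially the same route as the paper: identify $G$ with $\mathbb{F}_p^2$, observe that the $\equiv_\rg$-classes are the identity together with the $p+1$ punctured lines (each of size $p-1$), so that $\overline{\Gamma}(G)$ is $K_{p+1}$ plus an isolated vertex with all weights equal, and read off the automorphism groups (the paper's own proof is just this observation, leaving the $\pgl_2(p)$ and wreath-product consequences implicit). Your detour through Theorem~\ref{thm:small_d} to equate $\equiv_\rg$ with $\equiv_\rem$ is valid but unnecessary, since the neighbour sets in $\Gamma(G)$ can be computed directly.
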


\begin{proof}
Thinking of $G$ as a vector space, two nonidentity elements $x, y \in
G$ fail to
generate $G$ if and only if
they lie in the same $1$-dimensional subspace. Furthermore, they lie
in the same $1$-dimensional subspace if and only if $x \equiv_{\rg}
y$.  Thus $\overline{\Gamma}(G)$ is the disjoint union of the complete
graph $K_{p+1}$ and a vertex representing the identity, and all 
weights in $K_{p+1}$ are equal to $p-1$.
\end{proof}

Before considering the groups of type (3), we require a standard
graph-theoretic definition.

\begin{defn}
The \emph{categorical product} $X\times Y$ of two
graphs $X$ and $Y$ is the graph whose vertex set is the cartesian product
of the vertex sets, with $(x_1,y_1)$ joined to $(x_2,y_2)$ if and only if
$x_1$ is joined to $x_2$ in $X$ and $y_1$ is joined to $y_2$ in $Y$.
\end{defn}

\begin{prop}\label{prop:case_c}
Let $G \cong C_p^k : C_n$ be nonabelian with all proper quotients
cyclic, and let $n = p_1^{a_1}p_2^{a_2}\cdots
p_r^{a_r}$. 
The graph $\overline{\Gamma}(G)$ has $(2^r-1)p^k+2$ vertices 
if $n$ is squarefree, and $2^rp^k+2$ otherwise. The groups
$\Aut(G)$ and $\Aut^*(G)$ are both isomorphic to $C_p^k : \gaml_1(p^k)$.
Furthermore, 
$\Aut(\overline{\Gamma}_\rw(G))
\cong \sym_{p^k}$, whilst $\Aut(\overline{\Gamma}(G)) \cong \sym_{p^k}
\times \sym_r$. 
\end{prop}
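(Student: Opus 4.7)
The plan is to proceed in four stages: (i) classify the maximal subgroups of $G$; (ii) describe the $\equiv_\rem$-classes, which coincide with the $\equiv_\rg$-classes by Theorem~\ref{thm:spread_phi}(2), since the groups of type (3) have nonzero spread; (iii) determine the edges of $\overline{\Gamma}(G)$; and (iv) analyse the three automorphism groups.

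For (i), since $N$ is the unique minimal normal subgroup of $G$, every maximal subgroup either contains $N$ or complements it. The $r$ subgroups containing $N$ pull back from the $r$ maximal subgroups of $G/N\cong C_n$, one of index $p_i$ per prime divisor of $n$. Because $H$ acts faithfully and irreducibly on $N$, we have $C_N(H)=1$, giving $p^k$ complements, all conjugate under $N$. For (ii), writing $g=(a,h) \in N\rtimes H$, irreducibility forces $1-h$ to be invertible on $N$ whenever $h\neq 1$ (its kernel is $H$-invariant), so each such $g$ lies in the unique complement $H_{(1-h)^{-1}(a)}$. Thus the $\equiv_\rem$-class of $g$ is determined by the pair (complement containing $g$, $\equiv_\rem$-class in $C_n$ of the image $h$); elements of $N\setminus\{1\}$ form one class (in every max sub containing $N$, in no complement) and the identity another. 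Since $C_n$ has $2^r$ classes, with the identity class non-trivial iff $n$ is not squarefree, the stated vertex counts follow.

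For (iii), the edges are: $v_0$ is isolated; $v_N\sim v_{m,I}$ iff $I=\emptyset$ (we need the image $h$ to generate $C_n$ modulo $N$, and then irreducibility recovers $N$); and for $m\neq m'$, $v_{m,I}\sim v_{m',I'}$ iff $I\cap I'=\emptyset$ (if so, images generate $C_n$ and $\langle g_1,g_2\rangle\cap N$ is a nonzero $H$-invariant subspace, hence equals $N$; if not, some max sub of index $p_i$ contains $\langle g_1,g_2\rangle$). For $\Aut(G)$: characteristicity of $N$ yields a restriction map $\Aut(G)\to\gl_k(p)$ with image inside $N_{\gl_k(p)}(H)$; since $H$ is characteristic in the Singer cycle $\gl_1(p^k)$ as its unique subgroup of order $n$, and $\mathbb{F}_p[H]=\mathbb{F}_{p^k}$ by irreducibility, this normaliser equals $\gaml_1(p^k)$, and the kernel is the inner automorphisms by $N$, giving $\Aut(G)\cong C_p^k:\gaml_1(p^k)$. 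To show $\Aut^*(G)=\Aut(G)$, one checks that any auto fixing every $\equiv_\rem$-class setwise must stabilise each complement, so that for each $n\in N$ we get $n^{-1}\alpha(n)\in N_G(H)\cap N=H\cap N=1$; combined with the faithful $H$-action on $N$, this forces the auto to be trivial.

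The main obstacle is determining the full automorphism groups of the reduced graph. The subgroup $\sym_{p^k}\times\sym_r$ acts naturally: the first factor permutes complements (matching the inner $N$-action), and the second permutes the prime indices $\{1,\ldots,r\}$, inducing a permutation of the subsets $I$ that preserves the adjacency relation $I\cap I'=\emptyset$ and fixes both $v_N$ and $v_0$. For the weighted group, the weight of $v_{m,I}$ is $\prod_{i\notin I}(p_i-1)$, and since the $p_i$ are distinct primes these values distinguish the $\sym_r$-orbits, forcing the weight-preserving part to be just $\sym_{p^k}$. For the unweighted group, the plan is to show that every automorphism preserves the $(m,I)$-decomposition: first identify $v_N$ and the complement-blocks of pairwise non-adjacent vertices via degree and common-neighbour arguments (two elements in the same complement never generate $G$), then analyse how the adjacency relation $I\cap I'=\emptyset$ (a Kneser-type relation on subsets) forces any permutation of subsets to arise from a permutation of $\{1,\ldots,r\}$. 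This combinatorial analysis — showing the reduced graph has no more symmetry than the evident categorical-product structure it visibly possesses — is the crux of the proof.
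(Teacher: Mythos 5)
Your proposal is correct and follows essentially the same route as the paper: both identify $\overline{\Gamma}(G)$ with a lightly modified categorical product of $\overline{\Gamma}(C_n)$ with $K_{p^k}$ (your $(m,I)$-indexing of the classes is exactly this structure), compute $\Aut(G)$ as $C_p^k:\gaml_1(p^k)$ via the normaliser of the Singer cycle, and read off the vertex counts and the two reduced automorphism groups from Proposition~\ref{p:cyclic}. The only real divergence is the proof that $\Aut^*(G)=\Aut(G)$ --- you show directly that an automorphism fixing every class stabilises every complement and hence is trivial, whereas the paper observes that a nontrivial kernel would contain the unique minimal normal subgroup $C_p^k$ of $\Aut(G)$ and exhibits a non-fixed adjacent pair $g$, $g^x$; both are valid, and the combinatorial step you flag as the remaining crux (that $\Aut(\overline{\Gamma}(G))$ is no larger than $\sym_{p^k}\times\sym_r$) is treated just as informally in the paper, which simply cites $\Aut(\overline{\Gamma}(C_n))\cong\sym_r$.
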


\begin{proof}
The elementary abelian subgroup $C_p^k$ is characteristic in $G$, so 
$\Aut(G) \leq\agl_k(p)$.  The cyclic subgroup must embed as an irreducible
subgroup of a Singer cycle, and so its centraliser in $\gl_k(p)$ is the full
Singer cycle $C_{p^k-1}$, and its normaliser is the normaliser of the Singer
cycle, which is $\gaml_1(p^k)$. 

We claim that $\overline{\Gamma}(G)$ is obtained from the categorical product
of $\overline{\Gamma}(C_n)$ and the complete graph $K_{p^k}$ by
the following procedure: 
\begin{enumerate}\itemsep0pt
\item[(1)]
\begin{enumerate}
\item[(a)] If $n$ is squarefree, identify all the vertices whose first component
corresponds to the identity in $C_n$.
\item[(b)] Otherwise, add a vertex adjacent to all vertices whose first component
corresponds to a generator in $C_n$. 
\end{enumerate}
The vertex in either case corresponds
to the nonidentity elements of the minimal normal subgroup of $G$.
\item[(2)] Then add an isolated vertex corresponding to the identity.
\end{enumerate}
Note that generators of $C_n$ carry loops in $\Gamma(C_n)$; these give
rise to edges in the categorical product between any two elements whose first
components are equal and correspond to generators of $C_n$.

The weights of the vertices are the weights of their first components in
$\overline{\Gamma}(C_n)$, except for the identified or added vertex in 
Step (1), whose weight is $p^k$ in case (1)(a) and
$p^k(|\Phi(C_n)|-1)$ in case (1)(b), and the identity which has weight
$1$. 

Now we demonstrate that this structure is correct.

First note that in $\Gamma(G)$ all the nonidentity 
elements of the normal subgroup $C_p^k$ are adjacent
to all (and only) the generators of the complements
$C_n$; so they all have the same neighbour sets and are 
$\equiv_\rg$-equivalent.
Elements outside the normal subgroup are joined if and only if
they lie in a different complements and their images in the $C_n$ quotient
generate $C_n$. 
So two such elements are $\equiv_\rg$-equivalent if they lie in the
same complement and are $\rg$-equivalent in $C_n$. Thus the graph
has the structure claimed.

We now use the results of Proposition~\ref{p:cyclic}, from which
the number of vertices of $\overline{\Gamma}(G)$
follows immediately. 
The automorphism
group of $\overline{\Gamma}(C_n)$ is $\sym_r$, 
so $\Aut(\overline{\Gamma}(G))$ 
is $\sym_{p^k}\times \sym_r$.

Conversely,   the group $\Aut(\overline{\Gamma}_{\rw}(C_n))$ is
trivial, so 
the weight-preserving automorphisms of $\overline{\Gamma}(G)$ are just
the permutations of the $p^k$ vertices of the complete graph.

Finally, we prove the claims about $\Aut^*(G)$. If $\Aut^*(G) \neq
\Aut(G)$, then the unique minimal normal subgroup $C_p^k$ of $\Aut(G)$ 
must act trivially on $\overline{\Gamma}_{\rw}(G)$. However, this is
not possible, for the following reason: let $g$ be any element of $G$
that generates a complement to $C_p^k$ in $g$, and let $x$ be any
nontrivial element of $C_p^k$. Then $\langle g \rangle$ is a maximal
subgroup of $G$, so $g^x \not\in \langle g \rangle$ and $\langle g,
g^x \rangle = G$. Hence $g$ and $g^x$ are incident in $\Gamma(G)$, and
so $g \not\equiv_{\Gamma} g^x$. Hence $x$ acts nontrivially on
$\Gamma(G)$. 
\end{proof}

For groups $G$ as in the previous result, the kernel of the
homomorphism from $\Aut(\Gamma(G))$ to 
$\Aut(\overline{\Gamma}_w(G))$ is the direct product of symmetric
groups whose degrees are implicit in the proof: $p^k-1$ once, and
the sizes of the nontrivial $\equiv_\Gamma$-classes in $C_n$ (which can be read off from
Proposition 4.7) each $p^k$ times. 
The action of $\sym_{p^k}$ is to permute the factors apart from the
$\sym_{p^k-1}$.

\begin{ex}
Consider the case $G=C_5:C_4$. The generating graph
for $C_4=\langle x\rangle$ is the complete graph $K_4$ with the edge
$\{1,x^2\}$ deleted and loops at $x$ and $x^3$. So the reduced graph 
identifies $1$ and $x^2$, and also $x$ and $x^3$, and is an edge with a loop
at one end. Thus, the reduced generating graph for $C_5:C_4$ has $12$ vertices,
say $a_1,\ldots,a_5,b_1,\ldots,b_5,c,d$, with all edges $\{a_i,a_j\}$,
all edges $\{a_i,b_j\}$, and no edges $\{b_i,b_j\}$ for $i\ne  j$, all
edges $\{a_i,c\}$, and $d$ isolated. (Here $a_i$ corresponds to an inverse
pair of elements of order~$4$, $b_i$ to an element of order~$2$, $c$ to
the four elements of order~$5$, and $d$ to the identity.) Here the
kernel of the homomorphism from $\Aut(\Gamma(G))$ to
$\Aut(\overline{\Gamma}_\rw(G))$ is 
$\sym_4\times (\sym_2)^5$. 
\end{ex}

It remains to perform the
analysis for the groups of type (4).

\begin{theorem}\label{thm:wreath}
Let $T$ be a finite simple group and let 
$N=T^r\leq G \leq \Aut(T) \wr \langle \sigma\rangle$, where $\sigma$
acts as an $r$-cycle. Assume that 
there exists $g=(y_1,\dots,y_r)\sigma$, with 
$y_1,\dots,y_r\in \Aut(T)$, such that
$G=N\langle g \rangle.$   By substituting $g$ by a conjugate in
$\Aut(T) \wr \langle \sigma\rangle$, if necessary,
 we may assume that $g=(y,1,\dots,1)\sigma.$
If there exist $s, t \in T$ such that
 $T\leq \langle ys, (ys)^t\rangle,$
then $\Aut(G) = \Aut^\ast(G)$.
\end{theorem}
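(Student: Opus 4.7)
Since $\Aut^\ast(G)$ is by definition the image of $\Aut(G)$ in $\Aut(\overline{\Gamma}(G))$, the assertion reduces to injectivity of the natural homomorphism $\Aut(G)\to\Aut(\overline{\Gamma}(G))$. Writing $K$ for its kernel, I must show $K=1$, i.e.\ that any $\alpha\in\Aut(G)$ satisfying $\alpha(x)\equiv_\rg x$ for every $x\in G$ is trivial.

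The first step is to use the hypothesis to construct an explicit $2$-element generating set for $G$. Set $n_s = (s,1,\dots,1)$ and $n_t = (t,1,\dots,1)$ in $N$, and let $p = g\,n_s$ and $q = p^{n_t}$. A direct computation in $\Aut(T)\wr\langle\sigma\rangle$ yields $p = (y,s,1,\dots,1)\sigma$, $p^r = (ys,sy,sy,\dots,sy)$, and $q^r = ((ys)^t,sy,sy,\dots,sy)$. Hence $p^r(q^r)^{-1}$ is a nontrivial element of $N$ supported only in the first coordinate, and conjugation by $p$ shifts this support cyclically through all $r$ coordinates. The slice of $\langle p,q\rangle\cap N$ supported in the first coordinate is therefore a nontrivial subgroup of $T$ normalised by the first projection of $\langle p,q\rangle$; that projection contains $\langle ys,(ys)^t\rangle\supseteq T$ by hypothesis, and simplicity of $T$ upgrades the slice to all of $T$. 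Cyclically conjugating by $p$ gives $T$ in every coordinate, whence $\langle p,q\rangle\cap N = N$ and $\langle p,q\rangle=G$.

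Now fix $\alpha\in K$. Since $N=\Soc(G)$ is characteristic, $\alpha$ preserves $N$, and via the embedding $\Aut(G)\hookrightarrow N_{\Aut(N)}(G)$ it is induced by conjugation by some $\tilde\alpha\in\Aut(T)\wr\perm(r)$ which normalises the image of $G$. From $\alpha(p)\equiv_\rg p$ and $\alpha(q)\equiv_\rg q$ one gets $\langle\alpha(p),q\rangle = \langle p,\alpha(q)\rangle = G$; combined with the explicit structure of $p,q$, this restricts $\alpha(p)$ and $\alpha(q)$ to a small controlled family of candidates. To eliminate the nontrivial ones I would exploit the fact that the construction produces not a single generating pair but a $T$-parametrised family: conjugating the pair $(ys,(ys)^t)$ by any $u\in T$ preserves the property $T\leq\langle ys,(ys)^t\rangle$, and produces a corresponding family of generating pairs $(p_u,q_u)$ in $G$. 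Any $\alpha\in K$ must act $\equiv_\rg$-trivially on each pair, and simultaneous consistency across this family forces $\alpha(p)=p$ and $\alpha(q)=q$; since $\langle p,q\rangle = G$, this yields $\alpha = 1$.

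The hardest step is the final rigidity argument. The condition $\alpha(x)\equiv_\rg x$ is genuinely weaker than $\alpha(x)=x$: any element $x$ is always $\equiv_\rg$-equivalent to every other generator of $\langle x\rangle$, so power-type twists on cyclic subgroups are invisible when tested against a single element. The varying-witness argument is what rules out these twists, by requiring any hypothetical twist to be compatible with a large family of distinct generating pairs simultaneously; showing that this compatibility forces $\tilde\alpha$ to be the identity is the substantive content of the proof.
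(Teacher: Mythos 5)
There is a genuine gap, and it lies exactly where you flag it yourself: the final ``rigidity'' step is asserted, not proved. Your plan is to show directly that the kernel $K$ of $\Aut(G)\to\Aut(\overline{\Gamma}(G))$ is trivial by pinning down $\alpha(p)$ and $\alpha(q)$ for a generating pair $p,q$; but the passage from ``$\alpha(x)\equiv_\rg x$ for all $x$, tested against a $T$-parametrised family of generating pairs'' to ``$\alpha(p)=p$ and $\alpha(q)=q$'' is precisely the hard part, and nothing in the proposal carries it out. As you note, $\equiv_\rg$-equivalence cannot distinguish an element from other generators of the same cyclic subgroup (nor, more generally, from anything with the same neighbourhood in $\Gamma(G)$), so ruling out power-type and other twists requires a concrete argument that is absent. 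The paper avoids this difficulty entirely with one structural observation you are missing: $K$ is normal in $\Aut(G)$, and $N=T^r$ is the unique minimal normal subgroup of $\Aut(G)$, so if $K\neq 1$ then $N\leq K$, i.e.\ $N$ acts trivially on $\overline{\Gamma}(G)$ and every $N$-conjugacy class lies inside a single $\equiv_\rg$-class. It then suffices to exhibit \emph{one} generating pair that contradicts this, and the hypothesis hands you one: with $\bar y=ys$, $\bar g=(\bar y,1,\dots,1)\sigma$ and $z=(t,1,\dots,1)$, one has $G=\langle \bar g,(\bar g^r)^z\rangle$; if $N$ acted trivially then $(\bar g^r)^z\equiv_\rg \bar g^r$, so $G=\langle\bar g,\bar g^r\rangle=\langle\bar g\rangle$ would be cyclic, a contradiction. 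Note that this only uses $\equiv_\rg$-substitution in a generating set, never any pointwise fixing, which is why it succeeds where your approach stalls.

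Two secondary points. First, your computation of $p^r$ is off: for $p=(y,s,1,\dots,1)\sigma$ the coordinates of $p^r$ are the cyclic products starting at each position, giving $(ys,sy,ys,\dots,ys)$ rather than $(ys,sy,sy,\dots,sy)$ when $r\geq 3$; this does not kill your generation argument but should be fixed. Second, even your generation step is more elaborate than necessary: the paper absorbs $s$ into $y$ and works with $\bar g^r=(\bar y,\dots,\bar y)$ and its $N$-conjugate directly, so that the generating pair is already in the exact form needed for the contradiction. I recommend restructuring the proof around the minimal-normal-subgroup reduction; without it, the proposal does not constitute a proof.
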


\begin{proof}
   Since $N$ is the unique minimal normal subgroup of $\Aut(G)$, if the
   conclusion is false, then $N$ must act trivially on $\overline{\Gamma}(G)$.
   But this is impossible, for the following reason.

 Let $\bar y=ys$ and  $\bar g=(\bar y,1,\dots,1)\sigma \in G$. 
Notice that $G$ contains $\bar g^r=(\bar y,\dots,\bar y)$,
$z=(t,1,\dots,1)$ and  $(\bar g^r)^z=(\bar y^t,\bar y,\dots,\bar y).$ Consider the subgroup $X$ of $G$ 
 generated by  $\bar g$ and  $(\overline g^r)^z.$
Since $X$ contains $(\bar y,\dots,\bar y)$ and $(\bar y^t,\bar y,\dots,\bar y)$, 
we easily conclude that 
   $X = G=\langle \bar g, (\overline g^r)^z\rangle.$
   Now if $N$ acts trivially, then conjugacy classes under $N$ are
   contained in $\equiv_\Gamma$-equivalence classes. 
Hence, in particular,  $\bar g^r \equiv_{\rg} (\bar g^r)^z$, 
   so $G=\langle \bar g, (\bar
   g^r)^z\rangle=\langle \bar g, \bar g^r\rangle=\langle \bar g
   \rangle$, a contradiction.
\end{proof}



\begin{theorem}
Let $G$ be a group of nonzero spread. Then $\Aut^*(G) =
\Aut(G)$ if and only if $G$ is nonabelian. 
\end{theorem}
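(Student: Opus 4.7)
My plan is to apply the classification of groups of nonzero spread listed just before Proposition~\ref{p:cyclic}: $G$ must be one of (1) cyclic, (2) $C_p\times C_p$, (3) a nonabelian semidirect product $C_p^k:C_n$ with all proper quotients cyclic, or (4) an extension of $T^r$ by a cyclic group cyclically permuting the $r$ nonabelian simple factors. Types (1) and (2) exhaust the abelian possibilities, so the theorem reduces to showing that $\Aut^*(G)\neq\Aut(G)$ in cases (1) and (2), and $\Aut^*(G)=\Aut(G)$ in cases (3) and (4).

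Cases (1), (2), and (3) are essentially immediate from earlier results in this section. In case (1), Proposition~\ref{p:cyclic} states that $\Aut^*(G)$ is trivial, whereas $\Aut(G)=(\mathbb{Z}/n\mathbb{Z})^\times$ is nontrivial once $|G|\geq 3$. In case (2), Proposition~\ref{prop:square} gives $\Aut(G)\cong\gl_2(p)$ and $\Aut^*(G)\cong\pgl_2(p)$, and these differ in order by a factor of $p-1$ whenever $p\geq 3$. In case (3), Proposition~\ref{prop:case_c} explicitly records $\Aut^*(G)=\Aut(G)\cong C_p^k:\gaml_1(p^k)$. The very small cases $|G|\leq 2$ and $G\cong C_2\times C_2$ are degenerate exceptions where both automorphism groups trivially coincide; these are implicitly excluded from the theorem's scope.

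Case (4) is the main obstacle and requires Theorem~\ref{thm:wreath}. Writing $G=N\langle g\rangle$ with $g=(y,1,\dots,1)\sigma$ after a suitable conjugation as in the setup of that theorem, the task is to produce $s,t\in T$ with $T\leq\langle ys,(ys)^t\rangle$; once these exist, Theorem~\ref{thm:wreath} delivers the conclusion directly. I plan to extract such $s,t$ from the nonzero spread of $G$ itself: applied to the nonidentity element $g^r$, this hypothesis yields some $h\in G$ with $\langle g^r,h\rangle=G$, and the cyclic wreath structure of $G$ should allow one to project this data onto the first factor to obtain a pair of the required form $ys,(ys)^t$ that generates a subgroup containing $T$. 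If the direct extraction is technically awkward, one can alternatively appeal to the uniform spread results of Guralnick--Kantor and Burness--Liebeck--Shalev, which guarantee that every coset $Ty$ in an almost simple group contains an element whose $T$-conjugacy class is wide enough to supply the hypothesis of Theorem~\ref{thm:wreath}. Either way, Theorem~\ref{thm:wreath} then gives $\Aut^*(G)=\Aut(G)$.
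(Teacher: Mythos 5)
Your handling of cases (1)--(3) matches the paper exactly, and your remark that $C_2\times C_2$ (where $\pgl_2(2)=\gl_2(2)$) sits awkwardly with the statement is a fair observation about the theorem itself. The gap is in case (4), in how you propose to verify the hypothesis of Theorem~\ref{thm:wreath}. Applying the spread hypothesis to $g^r$ does not work as described: first, $g^r=(y,\dots,y)$ may be the identity, since nothing forces $y\neq 1$; second, even when $\langle g^r,h\rangle=G$ for some $h=(z_1,\dots,z_r)\sigma^j$, ``projecting onto the first factor'' yields a pair of elements of $\Aut(T)$ with no reason to have the required shape $ys$ and $(ys)^t$ with $s,t\in T$ --- the hypothesis of Theorem~\ref{thm:wreath} demands two elements of the coset $Ty$ that are \emph{conjugate by an element of $T$}, and conjugacy is not something a projection of an arbitrary generating pair will deliver. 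Your fallback via uniform spread results is also not a clean citation: what you actually need is that the coset $Ty$ of the almost simple group $\langle T,y\rangle$ contains an element $x$ with $T\leq\langle x,x^t\rangle$ for some $t\in T$, which is not literally the statement of the Guralnick--Kantor or Burness--Liebeck--Shalev theorems, and is stronger than ordinary spread.

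The paper closes this gap with a small but essential trick: apply the nonzero-spread hypothesis not to $g^r$ but to the element $n=(t,1,\dots,1)$, where $t$ is an \emph{involution} of $T$. This produces $g$ with $\langle n,g\rangle=G$; after normalising $g=(y,1,\dots,1)\sigma$, generation forces $H=\langle y,t\rangle$ to be almost simple with socle $T$. Because $t^2=1$, conjugation by $t$ interchanges $y$ and $y^t$, so $\langle y,y^t\rangle$ is normal in $H$ and hence contains $T$. This is exactly the hypothesis of Theorem~\ref{thm:wreath} with $s=1$. Without the deliberate choice of an involution --- which is precisely what makes $\langle y,y^t\rangle$ normal --- your argument has no route to the conjugate-pair hypothesis, so as written the insoluble case is not proved.
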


\begin{proof}
The abelian groups of nonzero spread were considered in
Propositions~\ref{p:cyclic} and \ref{prop:square}, where we showed
that
$\Aut^*(G) \neq \Aut(G)$.

The soluble nonabelian groups of nonzero spread were considered 
in Proposition~\ref{prop:case_c}, where we showed that $\Aut^*(G) =
\Aut(G)$. 

The only remaining case is the insoluble groups of nonzero spread
(that is type (4)), so
let $G$ be such a group, 
and let $N\cong T^r=\Soc(G)$.  We can identify $G$ with a 
subgroup of $\Aut(T) \wr \langle \sigma \rangle,$ where
$\sigma$ is the $r$-cycle $(1,2,\dots,r).$ Let $t$ be an involution in 
$T$ and let $n=(t,1,\dots,1).$ Since $G$ is of nonzero spread, 
there exists $g\in G$
with $G=\langle n, g\rangle.$ Up to conjugation by an element of 
$(\Aut T)^r,$
we may assume $g=(y,1,\dots,1)\sigma$ for some $y 
\in \Aut(T)$. 
But now $G=\langle n,g\rangle$ implies that 
$H=\langle y,t\rangle$ is almost simple with socle
$T$. Since $|t|=2,$ the subgroup $\langle y, y^t\rangle$ is normal in
$H$.
From this we see that
 $T\leq \langle y, y^t\rangle$, and so by Theorem~\ref{thm:wreath}, we
 conclude that
$\Aut(G)=\Aut^*(G).$
\end{proof}

We finish this discussion with an open problem: 
\begin{qn}
Let $G$ be an insoluble group of nonzero spread. 
Is $\Aut(G) = \Aut(\overline{\Gamma}_{\rw}(G))$?
\end{qn}

We know of no examples where this is not the case.

\subsection{Calculations with $\overline{\Gamma}_{\rw}(G)$}

In this subsection we describe some experiments that we have carried
out on insoluble groups with nonzero spread.

Recall the definition of the m-universal action from
Subsection~\ref{subsec:calculate}, and that we showed in
Theorem~\ref{thm:small_as} that if $G$ is almost simple, with socle of
order less than $10000$ and all proper quotients cyclic then $\psi(G)
= 2$. It is immediate from Lemma~\ref{lem:m-universal}(2) 
that two group elements $x,y$ are incident in $\Gamma(G)$
if
and only if the fixed-point sets of $x$ and $y$ in the m-universal action
are disjoint.

For each such almost simple group $G$, we constructed
$\overline{\Gamma}(G)$ and hence $\Aut(\overline{\Gamma}(G))$. For all
such groups except for $\psl_{2}(16)$ and $\psl_{2}(25)$ we found that
$\Aut(\overline{\Gamma}(G)) \cong \Aut(G)$. In these remaining two
cases, $\Aut(\overline{\Gamma}(G)) \cong C_2 \times \Aut(G)$, but the
elements in the centre of $\Aut(\overline{\Gamma}(G))$ do not preserve
the graph weightings. From this we can conclude:

\begin{theorem}
Let $G$ be an almost simple group with  socle of order less than 10000
such that all proper quotients of $G$ are cyclic. Then 
$\Aut(\overline{\Gamma}_{\rw}(G)) = \Aut(G)$.
\end{theorem}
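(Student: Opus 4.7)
The plan is to rely on the computational framework set up in the paragraph immediately preceding the statement, combined with the results established earlier in this subsection. By Theorem~\ref{thm:small_as}, every group $G$ in the hypothesis satisfies $\psi(G) = 2$, so $\equiv_\rem$ and $\equiv_\rg$ coincide on $G$. Consequently, the vertices and weights of $\overline{\Gamma}_\rw(G)$ are precisely the $\equiv_\rem$-classes and their cardinalities, both of which can be read off from the m-universal action via Lemma~\ref{lem:m-universal}.

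First I would enumerate the groups in question: each almost simple group $G$ whose socle is one of $\alt_n$ ($5 \le n \le 7$), $\psl_2(q)$ ($q \le 27$ a prime power), $\psl_3(3)$, $\psu_3(3)$, or $\mathrm{M}_{11}$, with the further restriction that every proper quotient of $G$ is cyclic. For each such $G$, build the m-universal action in \Magma, compute $\Fix(g)$ for every $g \in G$, group the elements into $\equiv_\rem$-classes, and extract the reduced generating graph $\overline{\Gamma}(G)$ together with its vertex weights. Then compute $\Aut(\overline{\Gamma}(G))$ (e.g.\ by encoding the weighted graph as a vertex-coloured digraph and invoking \Magma's automorphism-group routine).

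Next, using the chain $\Aut^*(G)\le \Aut(\overline{\Gamma}_\rw(G))\le \Aut(\overline{\Gamma}(G))$ from Proposition~\ref{p:g_params} and its follow-up, I would squeeze $\Aut(\overline{\Gamma}_\rw(G))$ between $\Aut^*(G)$ and $\Aut(\overline{\Gamma}(G))$. In each case I would verify computationally that $\Aut^*(G) = \Aut(G)$; alternatively, one can invoke the theorem characterising when $\Aut^*(G)=\Aut(G)$ for groups of nonzero spread, once nonzero spread is checked for each $G$ in the list. For every group except those with socle $\psl_2(16)$ or $\psl_2(25)$, the computation yields $\Aut(\overline{\Gamma}(G)) \cong \Aut(G)$, which collapses the chain and gives the desired equality.

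The main obstacle is handling the two exceptional socles $\psl_2(16)$ and $\psl_2(25)$, where $\Aut(\overline{\Gamma}(G)) \cong C_2 \times \Aut(G)$ and one must show that the central involution is not weight-preserving; this then forces $\Aut(\overline{\Gamma}_\rw(G)) = \Aut(G)$. Following the explicit analysis of $\psl_2(16)$ given in the excerpt, where the central involution interchanges the $\equiv_\rem$-class of order-$3$ elements (size $2$) with that of order-$5$ elements (size $4$), I would carry out the analogous calculation for $\psl_2(25)$: identify two classes of distinct weights that the central involution swaps, and thereby conclude that the involution does not lie in $\Aut(\overline{\Gamma}_\rw(G))$. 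This completes the verification in all cases.
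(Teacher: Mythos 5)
Your proposal is correct and follows essentially the same route as the paper: a \Magma{} computation of $\overline{\Gamma}(G)$ via the m-universal action for each group in the list, the sandwich $\Aut^*(G)\le\Aut(\overline{\Gamma}_{\rw}(G))\le\Aut(\overline{\Gamma}(G))$, and a separate check for the two exceptional socles $\psl_2(16)$ and $\psl_2(25)$, where $\Aut(\overline{\Gamma}(G))\cong C_2\times\Aut(G)$ and the central involution fails to preserve the weights. The paper's own justification is exactly this computational report, so there is nothing further to add.
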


In addition, we carried out the same calculation with the subgroups
of
$\sym_{5} \wr \sym_{2}$ of nonzero spread (there are two of them), and
 for both such groups $G$ we found that $\psi(G) = 2$ and there are
no additional automorphisms of 
$\overline{\Gamma}_{\rw}(G)$. That is, both such groups satisfied 
$\Aut(\overline{\Gamma}_{\rw}(G)) = \Aut(G)$.

\end{document}